\newcommand{\be}{{\mathbf e}}
\renewcommand{\emptyset}{\varnothing}
\renewcommand{\min}{\operatorname{min}}
\title{The universal valuation of Coxeter matroids}
\author{Christopher Eur, Mario Sanchez, Mariel Supina}
\address{Stanford University. Stanford, CA. USA}
\email{chriseur@stanford.edu}
\address{University of California, Berkeley. Berkeley, CA. USA}
\email{mario\_sanchez@berkeley.edu}
\address{University of California, Berkeley. Berkeley, CA. USA}
\email{supina@math.berkeley.edu}
\begin{document}

\maketitle


\begin{abstract}
Coxeter matroids generalize matroids just as flag varieties of Lie groups generalize Grassmannians.
Valuations of Coxeter matroids are functions that behave well with respect to subdivisions of a Coxeter matroid into smaller ones.
We compute the universal valuative invariant of Coxeter matroids.  A key ingredient is the family of Coxeter Schubert matroids, which correspond to the Bruhat cells of flag varieties.
In the process, we compute the universal valuation of generalized Coxeter permutohedra, a larger family of polyhedra that model Coxeter analogues of combinatorial objects such as matroids, clusters, and posets.
\end{abstract}

\section{Introduction}

Let $V$ be a real finite dimensional vector space.  For a polyhedron $P \subseteq V$, let $\one_P: V \to \ZZ$ be the indicator function defined by $\one_P(x) = 1$ if $x\in P$ and 0 otherwise.  For a family $\mathscr P$ of polyhedra in $V$, let its \textbf{indicator group} $\mathbb I(\mathscr P)$ be the $\ZZ$-submodule of $\ZZ^{V}$ defined by
\[
\mathbb I(\mathscr P) :=\left\{ \sum_{P\in \mathscr P} a_P \one_P \in \ZZ^{V} \ \middle | \ a_P \in \ZZ,\ \textnormal{finitely many }a_P\textnormal{'s are nonzero}\right\}.
\]

\begin{definition}
A function $f: \mathscr P \to A$ from a family of polyhedra $\mathscr P$ to an abelian group $A$ is \textbf{valuative}\footnote{There are several variants of valuative functions in the literature.  Valuative functions as we defined here are sometimes called \emph{strongly valuative} functions.}
if there is a $\ZZ$-linear map $\widetilde f: \mathbb I(\mathscr P) \to A$ such that
$\widetilde f(\one_P) = f(P)$ for all $P\in \mathscr P$, or equivalently, if
\[
\sum_{i=1}^k a_i f(P_i) = 0 \quad\textnormal{whenever}\quad \sum_{i=1}^k a_i \one_{P_i} = 0 \quad\textnormal{for}\quad a_1, \ldots, a_k\in \ZZ \textnormal{ and } P_1, \ldots, P_k\in \mathscr P.
\]
We say that $f$ is a \textbf{valuation} on $\mathscr P$ in this case, and abuse the notation by writing $f$ also for $\widetilde f$.
\end{definition}

Valuative functions are fundamental objects in convex geometry, where they function as analogues of measures on convex bodies; see \cite{McM93} for a survey.  For (lattice) polyhedra, valuations provide a bridge between geometry and combinatorics particularly in the context of Ehrhart theory; see \cite{Joc19} for a survey.  The interaction between geometry and combinatorics is further strengthened in the study of valuations of (extended) generalized permutohedra.

\medskip
The set of \emph{(extended) generalized permutohedra} is a family of polyhedra that model several combinatorial objects including matroids, clusters, and posets \cite{Pos09, AA17}.  Valuative functions of these polyhedra were studied in \cite{AFR10, DF10} as invariants that behave well with respect to subdividing the associated combinatorial objects into smaller ones.  In particular, matroid subdivisions have rich connection to geometry, such as compactifications of fine Schubert cells \cite{Kap93, Laf99, Laf03}, tropical geometry \cite{Spe08, BEZ20}, and the $K$-theory of Grassmannians \cite{Spe09, FS12}.  The valuativeness of many matroid invariants, such as the beta invariant and the Tutte polynomial, is a witness to such geometric connections.  Most notably, the Schubert decomposition of Grassmannians appears in the \emph{$\mathcal G$-invariant} of matroids, an invariant that the authors of \cite{DF10} establish as the universal valuative invariant of matroids.

\medskip
We study valuative functions of the set of \emph{(extended) generalized Coxeter permutohedra}.
These polyhedra originate in Coxeter combinatorics, which recognizes that combinatorial objects associated to generalized permutohedra are inherently related to permutation groups, and accordingly studies their Coxeter analogues in arbitrary reflection groups.  Examples of such Coxeter analogues include {Coxeter matroids} \cite{GS87b,BGW03}, clusters \cite{HLT11}, and posets \cite{Rei92}.
The study of generalized Coxeter permutohedra as polyhedral models of these Coxeter combinatorial objects was initiated in \cite{ACEP20}.   Focusing on Coxeter matroids, we define the $\mathcal G$-invariant for {Coxeter matroids}, and show that it is the universal valuative invariant.  A key ingredient is the family of Coxeter matroids that correspond to Bruhat cells of flag varieties.

\subsection{Main results} \

\smallskip
Let $W$ be a finite reflection group with the associated root system $\Phi = (V,R)$ consisting of roots $R$ in a vector space $V$.  Let $\Pi_\Phi \subset V$ be a \textbf{$\Phi$-permutohedron}, which is the convex hull of the $W$-orbit of a general point in $V$.  Its normal fan is the Coxeter complex $\Sigma_\Phi$.

\begin{definition}\cite[Definition 4.3]{ACEP20}\label{def:GP}
An \textbf{extended generalized $\Phi$-permutohedron} is an extended deformation of $\Pi_\Phi$, that is, a (possibly unbounded) polyhedron $P \subseteq V$ such that each cone of its normal fan $\Sigma_P$ is a union of cones of $\Sigma_\Phi$.  Denote by $\mathsf{GP}_\Phi^+$ the set of all extended generalized $\Phi$-permutohedra.
\end{definition}

Our first main theorem explicitly describes the universal valuative function of $\mathsf{GP}^+_{\Phi}$ in terms of the \emph{Coxeter root cones} and \emph{tight containments}.  
Let $\Sigma_\Phi^\vee = \{\sigma^\vee \mid \sigma \in \Sigma_\Phi\}$ be the set of Coxeter root cones, which are the dual cones of cones in $\Sigma_\Phi$, and let $\tran(\Sigma_\Phi^\vee) = \{ C +v \mid C\in \Sigma_\Phi^\vee,\ v\in V\}$ be the set of all affine Coxeter root cones, which are translates of cones in $\Sigma_\Phi^\vee$.  An affine Coxeter root cone $C+v$ is said to \textbf{tightly contain} a polyhedron $P\subseteq V$ if
\[
C+v \supseteq P \quad\text{and}\quad  (\lineal(C)+v) \cap P \neq \emptyset
\]
(see \Cref{fig:tight containment} for an illustration).

\begin{maintheorem}\label{mainthm:Finvar}
Write $\{\be_{C+v}\}_{C+v \in \tran(\Sigma_\Phi^\vee)}$ for the standard basis of $\ZZ^{\oplus \tran(\Sigma_\Phi^\vee)}$.  The function
\[
\mathcal F: \mathsf{GP}^+_\Phi \to \ZZ^{\oplus \tran(\Sigma_\Phi^\vee)} \quad\text{defined by}\quad P \mapsto \sum_{\substack{C+v \in \tran(\Sigma_\Phi^\vee)\\ \textnormal{ tightly containing }P}}\be_{C+v}
\]
is the universal valuative function in the sense that for any valuative function $f: \mathsf{GP}_\Phi^+ \to A$ to an abelian group $A$, there exists a unique map $\varphi: \ZZ^{\oplus \tran(\Sigma_\Phi^\vee)} \to A$ such that $\varphi \circ \mathcal F = f$.
\end{maintheorem}

We prove \Cref{mainthm:Finvar} by establishing a more general statement \Cref{thm:tangent cone basis j}, which provides the universal valuative function for extended deformations of an arbitrary polyhedron.  Here, a useful tool we develop is \Cref{thm: tight containment is val}, which states that tight containments define valuations on extended deformations.
For a combinatorial application of \Cref{mainthm:Finvar} when $\Phi$ is a type $A$ root system, see \cite{AS20}, whose authors show further that $\mathcal{F}$ is a Hopf monoid morphism, and consequently construct new valuations on matroid polytopes, poset cones, and nestohedra.


\medskip
A feature unique to $\mathsf{GP}^+_\Phi$ not shared by extended deformations of an arbitrary polyhedron is the action of the group $W$ on $\mathsf{GP}^+_\Phi$, induced by the action of $W$ on $V$.  A valuative function $f$ on $\mathsf{GP}_{\Phi}^+$ is a \textbf{valuative invariant} if $f(w\cdot P) = f(P)$ for every $w\in W$ and $P \in \mathsf{GP}_\Phi^+$.  The universal valuative invariant of $\mathsf{GP}_{\Phi}^+$ is derived from \Cref{mainthm:Finvar} in \Cref{cor:G+invar}.

\medskip
We then turn to Coxeter matroids, which form a distinguished $W$-invariant subfamily of $\mathsf{GP}^+_\Phi$.  For $I$ a subset of a set of simple roots of $\Phi$, let $W_I$ be the corresponding parabolic subgroup of $W$ generated by the reflections corresponding to $I$.  Write $\leq$ for the Bruhat order on $W/W_I$, and for $w\in W$ and $B,B'\in W/W_I$, write $B\leq^w B'$ to mean $w^{-1}B \leq w^{-1} B'$.

\begin{definition}\label{def:Coxetermatroid}\cite[6.1.1]{BGW03}
For $I$ a subset of a set of simple roots of $\Phi$, a \textbf{Coxeter matroid of type $(\Phi,I)$}, or a \textbf{$(\Phi,I)$-matroid}, is a subset $M \subseteq W/W_I$ such that for every $w\in W$ there exists a unique $\leq^w$-minimal element in $M$.  The $\leq^w$-minimal element of $M$ is denoted $\min^w(M)$.
\end{definition}

By setting $W$ to be the permutation group and $W_I$ a maximal proper parabolic subgroup, one recovers the usual notion of matroids defined by Whitney \cite{Whi35} and independently by Nakasawa \cite{Nak35}.  We will say ``ordinary matroids'' to distinguish this usual notion of matroids from Coxeter matroids.  Like ordinary matroids, Coxeter matroids have several descriptions arising from different perspectives \cite{GS87a, BGW03}:
\begin{itemize}
    \item (Coxeter theory) \Cref{def:Coxetermatroid} of Coxeter matroids in terms of Bruhat order generalizes the characterization of ordinary matroids by the greedy algorithm optimization.
    \item (Lie theory) If $\Phi$ is crystallographic, (realizations of) Coxeter matroids correspond to points on the flag varieties of $\Phi$, just as (realizations of) ordinary matroids correspond to points on Grassmannians.
    \item (Polyhedral geometry) Coxeter matroids admit polyhedral models that are characterized in terms of edges being parallel to the roots of $\Phi$ (see \Cref{thm:GGMSCoxeter}).  This generalizes the theorem of \cite{GGMS87} which characterized the base polytopes of ordinary matroids in terms of edges being parallel to $\be_i - \be_j\in \RR^n$ for some $i\neq j \in \{1,\ldots, n\}$.
\end{itemize}
The last point above implies that the polyhedral models of Coxeter matroids of type $(\Phi,I)$ form a subfamily of $\mathsf{GP}_\Phi^+$, which we denote by $\mathsf{Mat}_{\Phi,I}$.  Our second main theorem computes the universal valuative invariant of Coxeter matroids.

\begin{maintheorem}\label{mainthm:Ginvar}
Write $\{U_B\}_{B\in W/W_I}$ for the standard basis of $\QQ^{W/W_I}$.  The function
\[
\mathcal G: \mathsf{Mat}_{\Phi,I} \to \QQ^{W/W_I} \quad\text{defined by}\quad M \mapsto \sum_{w\in W} U_{w^{-1}\cdot \min^w(M)}
\]
is the universal valuative invariant in the sense that for any valuative invariant $g: \mathsf{Mat}_{\Phi,I} \to A$ to a $\QQ$-vector space $A$, there exists a unique linear map $\psi: \QQ^{W/W_I} \to A$ such that $\psi \circ \mathcal G = g$.
\end{maintheorem}

We prove \Cref{mainthm:Ginvar} by establishing properties of \emph{Coxeter Schubert matroids} (\Cref{def:schubert}), which correspond to Bruhat cells of flag varieties (\Cref{rem:geomrem1}).  As a corollary, we establish that Coxeter Schubert matroids form a basis for the indicator space of isomorphism classes of Coxeter matroids  (\Cref{cor:Schubertsbasis}).  This parallels the fact that Bruhat cells give a basis of the cohomology ring of flag varieties.
As another application, for a class of Coxeter matroids known as \emph{delta-matroids}, we show that a well-studied invariant called the \emph{interlace polynomial} (\Cref{defn:interlace}) is a specialization of the $\mathcal G$-invariant and hence a valuative invariant (\Cref{thm:interlacePolynomial}).


\subsection{Relation to the work of Derksen and Fink}

For an ordinary matroid $M$ of rank $r$ on a ground set $[n]=\{1, \ldots, n\}$, Derksen and Fink \cite{DF10} define the $\mathcal G$-invariant of $M$ as follows:  For a permutation $w\in S_n$, let $X_i := \{w(1), \ldots, w(i)\}$ for $i = 1, \ldots, n$ and $X_0 := \emptyset$, and define $r_M(w) \in \{0,1\}^n$ by
\[
r_M(w)_i := \operatorname{rk}_M(X_i) - \operatorname{rk}_M(X_{i-1}) \quad {i = 1, \ldots, n},
\]
where $\operatorname{rk}_M$ is the rank function of $M$.  Let $\{U_\alpha \mid \alpha\in \{0,1\}^n\}$ be the standard basis of $\QQ^{\{0,1\}^n}$.  Then the $\mathcal G$-invariant of $M$ was defined in \cite{DF10} as
\[
\mathcal G(M) := \sum_{w\in S_n} U_{r(w)}.
\]
Let us relate this to our definition of $\mathcal G$-invariant in \Cref{mainthm:Ginvar}.  For $w\in S_n$, when the ground set $[n]$ is given weights $w^{-1}(i)$ for each $i\in [n]$, the greedy algorithm for matroids implies that the set $\{w(i) \in [n] \mid r(w)_i = 1\}$ is the minimal basis of $M$.  In other words, identifying elements of $\{0,1\}^n$ with subsets of $[n]$, we have $r(w) = w^{-1}\min^w(M)$, so that the $\mathcal G$-invariant in the sense of \cite{DF10} is exactly our $\mathcal G$-invariant
\[
\mathcal G(M) = \sum_{w\in S_n} U_{w^{-1}\min^w(M)}.
\]

While our work is thus a generalization of the work of Derksen and Fink, several arguments in \cite{DF10} fail fundamentally in the general Coxeter setting.  We highlight two differences between our work and \cite{DF10}:    
        \begin{itemize}
            \item \emph{Rank functions}: The rank function characterization of polymatroids and matroids plays a central role throughout \cite{DF10}.  While Coxeter submodularity was defined in \cite{ACEP20}, there is no known characterization of which $(\Phi,I)$-submodular functions are rank functions of $(\Phi,I)$-matroids.
            We circumvent this by introducing the notion of tight containment and translating certain results of \cite{DF10} into this geometric language.
            \item \emph{Schubert matroids}: A key feature of the type A root system utilized in \cite{DF10} is that Schubert matroid polytopes are the intersections of uniform matroid polytopes with the vertex cones of the standard permutohedron.  In other types however, these intersections are not necessarily Coxeter matroid polytopes, even in minuscule types (\Cref{rem:badintersection}).
            We circumvent this by borrowing tools from 0-Hecke algebras to establish properties of Coxeter Schubert matroids, along with a new argument for proving \Cref{mainthm:Ginvar}.
        \end{itemize}



\section{Valuative functions of extended deformations}\label{sec:valuation extended defs}

Since generalized Coxeter permutohedra are \emph{extended deformations} of certain polytopes, we first study valuative functions of extended deformations of an arbitrary polyhedron.
In \S\ref{subsec:deftight}, we introduce \emph{tight containments} as useful valuative functions of extended deformations.
In \S\ref{subsec:valdefbasis}, we compute a basis for the indicator group of the set of extended deformations of a polytope.
In \S\ref{subsec: universal val of extended defs}, we establish \Cref{thm:tangent cone basis j}, which includes \Cref{mainthm:Finvar} as a special case.

\smallskip
Throughout, let $V$ be a finite dimensional real vector space with an inner product $\langle \cdot, \cdot \rangle$.

\subsection{Extended deformations and tight containments}\label{subsec:deftight}

For a hyperplane $H = \{x\in V \mid \langle y,x \rangle = a \}$ defined by $y\in V$ and $a\in \RR$, we write $H^+ = \{x\in V \mid \langle y, x \rangle \geq a\}$ and $H^- = \{x\in V \mid \langle y,x \rangle \leq a\}$ for its two closed half-spaces.
A \textbf{polyhedron} $Q \subseteq V$ is a finite intersection of closed half-spaces, and is a \textbf{polytope} if it is compact.

\begin{figure}[h]
    \centering
    \includegraphics[width=.8\textwidth]{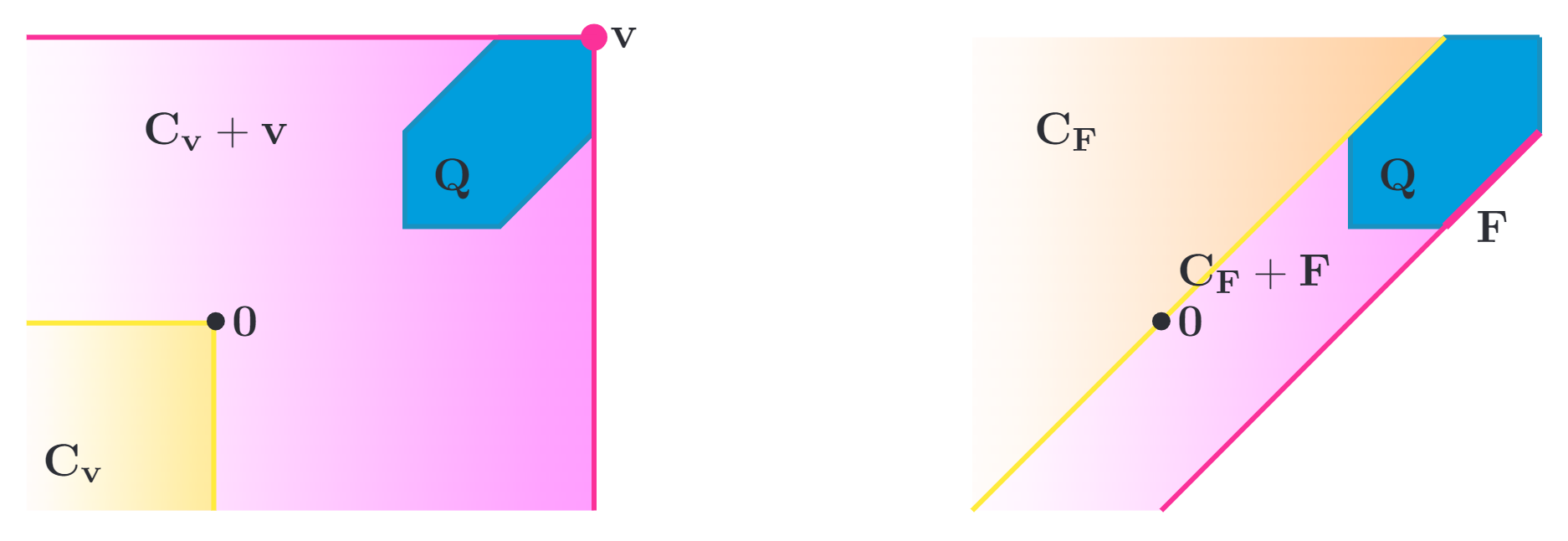}
    \caption{Tangent and affine tangent cones of two faces of the polygon $Q$.}
    \label{fig:tangent cones}
\end{figure}

\begin{notation}
For a polyhedron $Q\subseteq V$ and a face $F$ of $Q$, we set
    \begin{itemize}
        \item $\Sigma_Q$ to be the outer normal fan of $Q$,
        \item $\sigma_F$ to be the cone of $\Sigma_Q$ corresponding to the face $F$, which is the cone of linear functionals that attain their maximum value in $Q$ exactly on the face $F$,
        \item $C_F$ to be the \textbf{tangent cone} of $Q$ at $F$, which is the cone dual to $\sigma_F$, or explicitly, is $\operatorname{Cone}(v' - v \mid v \in F,\ v'\in Q)$,
        \item $C_F + F$ to be the \textbf{affine tangent cone} of $Q$ at $F$, which is the Minkowski sum of $C_F$ and $F$, or equivalently, is the translate $C_F + v$ of the tangent cone at $F$ by any $v\in F$,
        \item $\Sigma_Q^\vee := \{C_F \mid F \text{ is a face of } Q\} = \{\sigma^\vee \mid \sigma \in \Sigma_Q\}$ to be the set of tangent cones of $Q$, and
        \item $\tran(\Sigma_Q^\vee) := \{C+v \mid C\in \Sigma_Q^\vee, \ v\in V\}$ to be the set of all translates of tangent cones of $Q$.
    \end{itemize}
\end{notation}

\begin{definition}
Let $Q\subseteq V$ be a polyhedron.
A (possibly unbounded) polyhedron $P\subseteq V$ is an \textbf{extended deformation} of $Q$ if each cone of $\Sigma_{P}$ is a union of cones of $\Sigma_Q$.  The polyhedron $P$ is further a \textbf{deformation} of $Q$ if the supports of $\Sigma_{P}$ and $\Sigma_Q$ are equal, or equivalently, if $\Sigma_{P}$ is a coarsening of $\Sigma_Q$ (see \Cref{fig:deformations}).
\end{definition}

\begin{figure}[h]
    \centering
    \includegraphics[width=.6\textwidth]{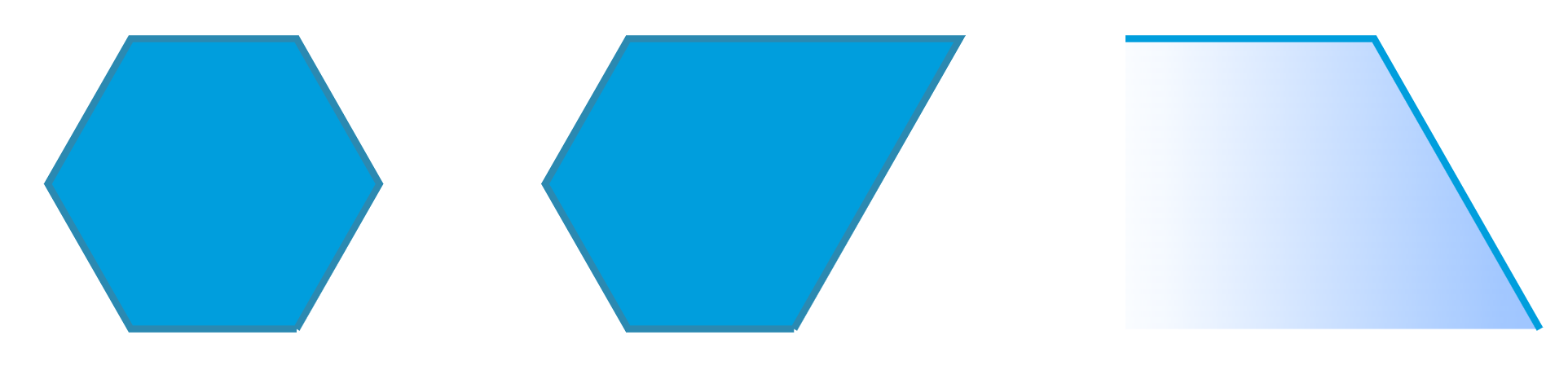}
    \caption{From left to right: A polytope $Q$, a deformation of $Q$, and an extended deformation of $Q$.}
    \label{fig:deformations}
\end{figure}

\begin{samepage}
We write
    \begin{itemize}
        \item $\Def^+(Q)$ for the set of all extended deformations of $Q$, and 
        \item $\Def(Q)$ for the set of all deformations of $Q$.
    \end{itemize}
\end{samepage}

We now introduce the notion of \emph{tight containments}, which will define useful valuative functions on the set of extended deformations.  
For a cone $C\subseteq V$, let $\lineal(C)$ be its \textbf{lineality space}, which is the maximal subspace of $V$ contained in $C$.

\begin{definition}\label{def:tight containment}
Let $C\subseteq V$ be a cone and $v\in V$.  We say that the translate $C+v$ of the cone $C$ \textbf{tightly contains} a polyhedron $P$ if $C+v$ contains $P$ and $P \cap (\lineal(C) + v) \not = \emptyset$.
\end{definition}

See \Cref{fig:tight containment} for illustrations of examples and non-examples of tight containments.

    \begin{figure}[h]
        \centering
        \begin{subfigure}{.2\linewidth}
            \centering
            \includegraphics[width=\linewidth]{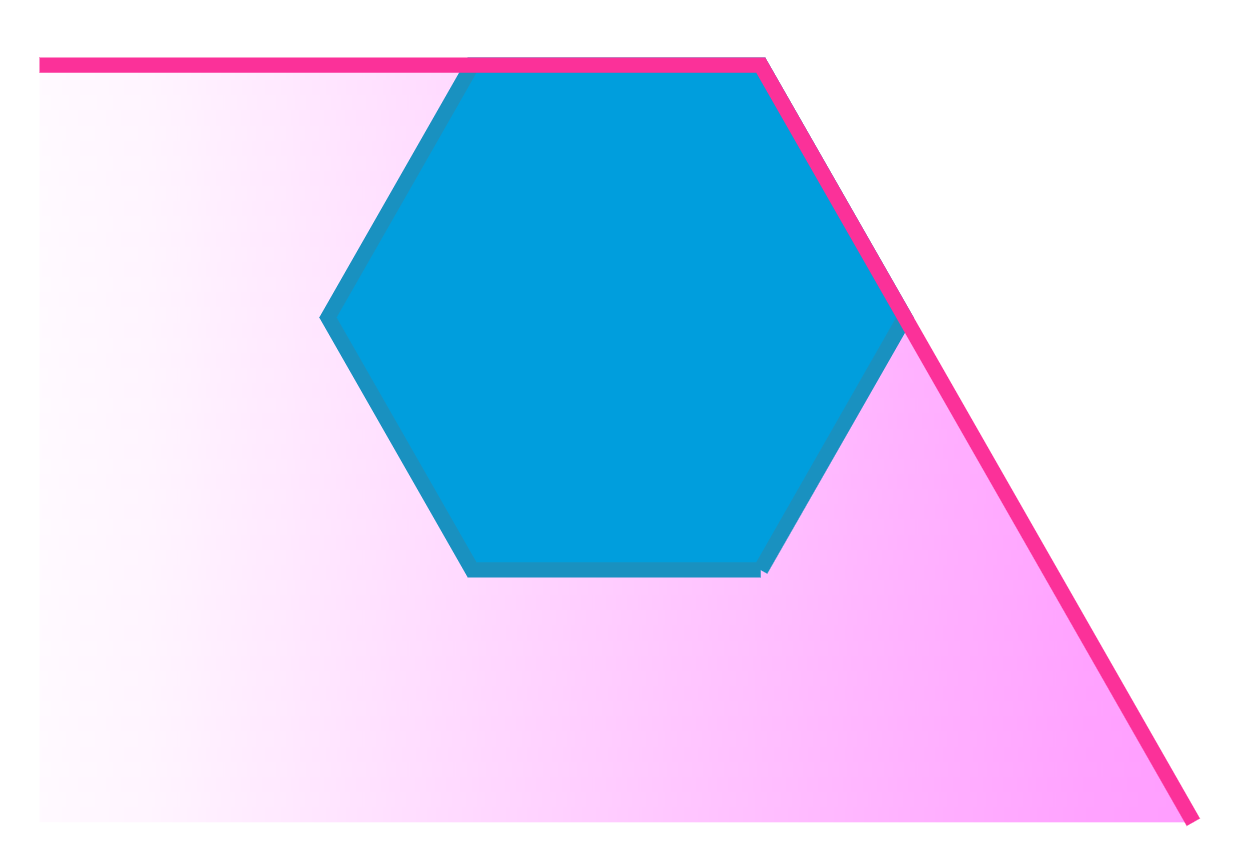}
            \caption{}
            \label{fig:tc1}
        \end{subfigure}
        \hspace{1mm}
        \begin{subfigure}{.2\linewidth}
            \centering
            \includegraphics[width=\linewidth]{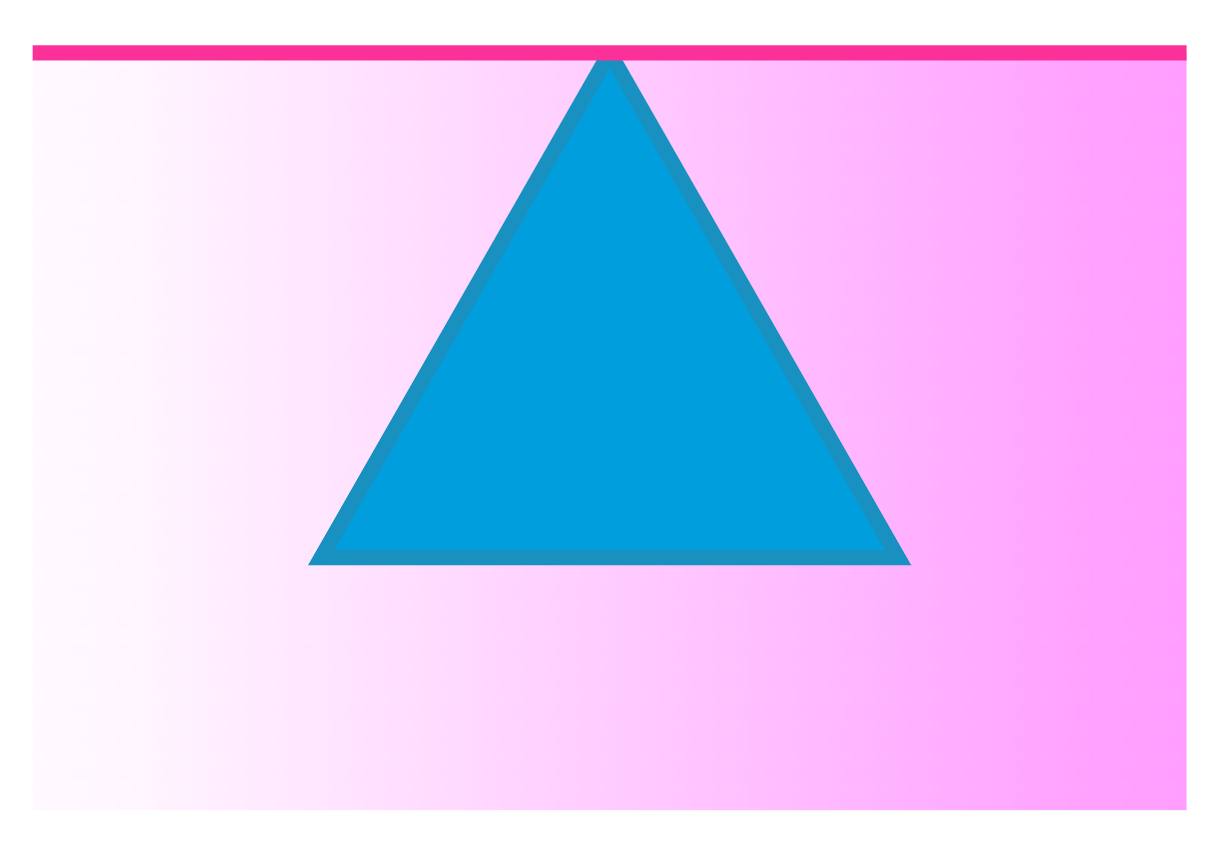}
            \caption{}
            \label{fig:tc2}
        \end{subfigure}
        \hspace{1mm}
        \begin{subfigure}{.2\linewidth}
            \centering
            \includegraphics[width=\linewidth]{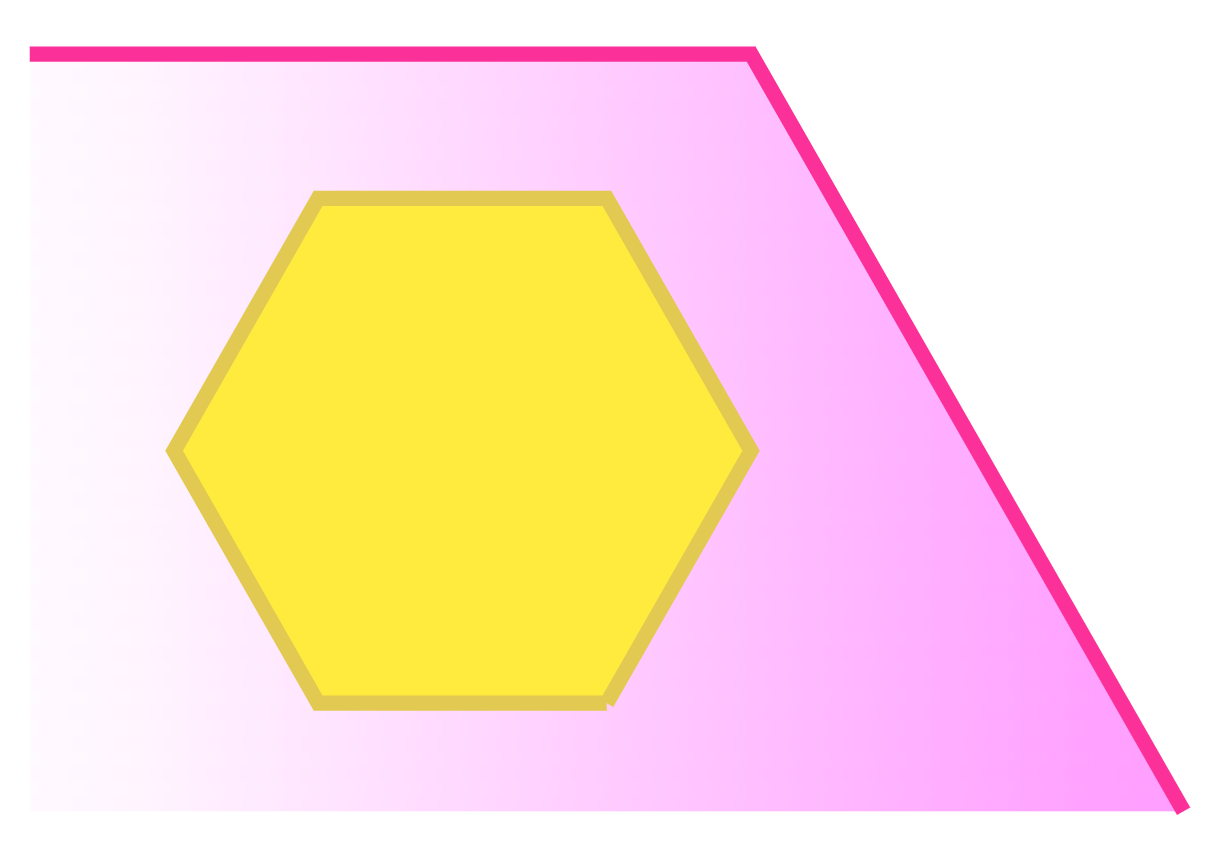}
            \caption{}
            \label{fig:tc3}
        \end{subfigure}
        \hspace{1mm}
        \begin{subfigure}{.2\linewidth}
            \centering
            \includegraphics[width=\linewidth]{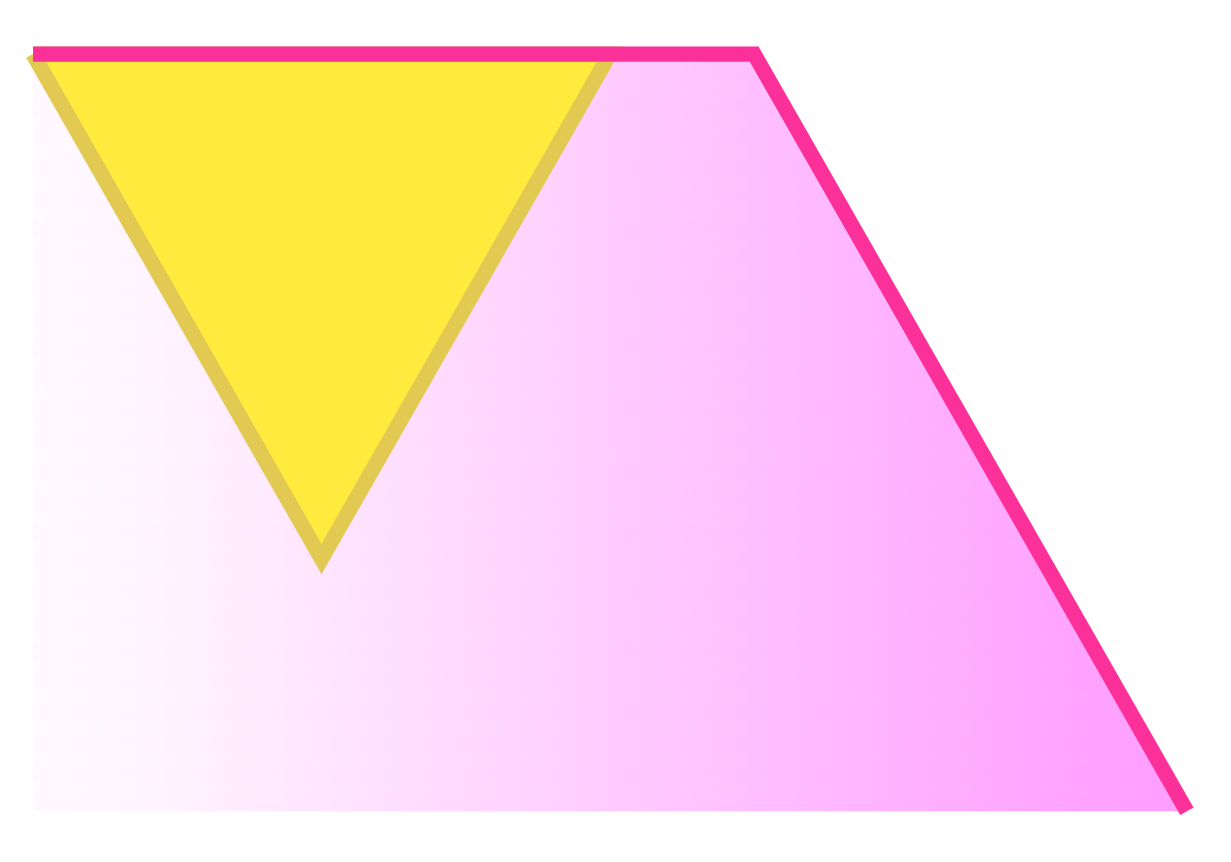}
            \caption{}
            \label{fig:tc4}
        \end{subfigure}
        \caption{Examples (\ref{fig:tc1}, \ref{fig:tc2}) and non-examples (\ref{fig:tc3}, \ref{fig:tc4}) of tight containments.}
        \label{fig:tight containment}
    \end{figure}

\begin{theorem}
\label{thm: tight containment is val}
    Let $F$ be a face of a polyhedron $Q\subset V$, and $v\in V$.
    Then the tight containment indicator function
        \[
        \tc{C_F+v}: \mathbb I(\Def^+(Q)) \to \ZZ \quad\text{defined by}\quad \tc{C_F+v}(P) :=
        \begin{cases}
        1 & \text{if $C_F+v$ tightly contains $P$} \\
        0 & \text{otherwise}
        \end{cases}
        \]
    is valuative.
\end{theorem}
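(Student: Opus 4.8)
The plan is to show that $\tc{C_F+v}$ extends to a $\ZZ$-linear functional on $\mathbb I(\Def^+(Q))$ by exhibiting it as a $\ZZ$-linear combination of indicator functions of known valuative type, or by checking the linear-relation criterion directly on indicator functions. I would first reduce to the case $v = 0$ after translating, since translation is a valuation-preserving symmetry of the whole setup (it permutes $\Def^+(Q)$ if we also translate $Q$, but more simply it is an automorphism of $V$ carrying $\mathbb I$ to $\mathbb I$); so it suffices to treat $\tc{C_F}$. Next, I would unwind the definition of tight containment: $C_F+v$ tightly contains $P$ iff $P \subseteq C_F$ and $P \cap \lineal(C_F) \neq \emptyset$. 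The containment condition $P \subseteq C_F$ is itself governed by an inclusion–exclusion / Euler-characteristic-type valuation (the indicator of "$P \subseteq$ a fixed polyhedron'' is valuative because $\one_{P \subseteq C} = $ a product/combination involving $\one_P$ and half-space indicators, and more robustly because "being contained in a fixed closed convex set'' is a lattice-theoretic condition on $\one_P$). The lineality-intersection condition $P \cap L \neq \emptyset$ for the fixed subspace $L = \lineal(C_F)$ is the subtler half.

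The key step I expect to do the real work is to express $\tc{C_F}$ as a difference of two "containment'' indicators. Concretely: $P \subseteq C_F$ and $P \cap L \neq \emptyset$ holds iff $P \subseteq C_F$ but $P$ is \emph{not} contained in any of the "shifted facets'' of $C_F$ away from $L$ — equivalently, writing $C_F = L + C'$ where $C'$ is a pointed cone in a complement of $L$, tightness says the image $\overline P$ of $P$ under the projection $V \to V/L$ satisfies $\overline P \subseteq C'$ and $0 \in \overline P$, i.e. $\overline P$ contains the apex of $C'$. So I would push everything through the linear projection $\pi: V \to V/L$: pushforward of indicator functions along a linear map sends $\mathbb I(\Def^+(Q))$-combinations to combinations of indicators of polyhedra (images of polyhedra are polyhedra), and crucially respects linear relations among indicator functions — if $\sum a_i \one_{P_i} = 0$ then $\sum a_i \one_{\pi(P_i)} = 0$ is \emph{not} automatic, so instead I would use the fiberwise integration / "projection'' valuation $\int_{\pi^{-1}(\cdot)}$, which is the standard way to transport valuations along projections (see \cite{McM93}): the map $\one_P \mapsto (\overline x \mapsto \chi(\pi^{-1}(\overline x) \cap P))$ where $\chi$ is the compactly-supported Euler characteristic, is valuative, and for the polyhedra here it records the relative interior structure correctly. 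Then "$0 \in \overline P$ and $\overline P \subseteq C'$'' becomes: $\overline P$ is a polyhedron contained in the pointed cone $C'$ whose recession/shape forces it to meet $\{0\}$, which is now a pointed-cone containment statement plus membership of a single point — and membership "$0 \in \overline P$'' among polyhedra in $\Def^+$ is valuative by a direct check (the indicator of a fixed point is a valuation on any family of polyhedra, being the limit/combination of small-box indicators, or by the standard fact that point evaluation $\one_P \mapsto \one_P(x_0)$ is $\ZZ$-linear on $\mathbb I$).

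Actually, the cleanest route — and the one I would ultimately write up — avoids pushforwards: evaluate the linear-relation criterion directly. Suppose $\sum_{i=1}^k a_i \one_{P_i} = 0$ with $P_i \in \Def^+(Q)$; I want $\sum a_i \tc{C_F+v}(P_i) = 0$. Pick any point $x_0$ in the relative interior of $\lineal(C_F)+v$ that is "generic'' (this uses that $P_i \subseteq C_F + v$ with nonempty intersection with $L+v$ means the intersection is a full-dimensional-in-$L$ piece, by the structure of deformations and tangent cones). Then $\tc{C_F+v}(P_i)$ can be read off as $\one_{P_i}(x_0')$ for a suitable nearby $x_0'$, or more robustly as a fixed $\ZZ$-linear combination $\sum_j \epsilon_j \one_{P_i}(x_j)$ of point-evaluations at finitely many points $x_j$ depending only on $C_F$ and $v$ (an inclusion–exclusion picking out "inside $C_F+v$, on the lineality slice''). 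Since each point-evaluation $\one_P \mapsto \one_P(x_j)$ is manifestly $\ZZ$-linear on $\mathbb I(\Def^+(Q))$, applying $\sum_j \epsilon_j(-)(x_j)$ to the relation $\sum a_i \one_{P_i}=0$ gives $\sum a_i \tc{C_F+v}(P_i) = 0$, as desired. The main obstacle is verifying the claim that tight containment of a deformation $P$ by $C_F+v$ is detected by such a fixed finite point-evaluation formula — this is where I would need the hypothesis that $P$ is an extended deformation of $Q$ (so its normal fan refines to $\Sigma_Q$, pinning down the possible "shapes'' of $P \cap (L+v)$), together with a careful choice of the points $x_j$ slightly off the lineality slice in the directions spanning $C_F/L$; I expect this to occupy the bulk of the argument, with the rest being bookkeeping.
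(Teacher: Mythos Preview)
The route you commit to at the end has a genuine gap. You claim that $\tc{C_F+v}(P)$ can be written as a fixed finite $\ZZ$-linear combination $\sum_j \epsilon_j \one_P(x_j)$ with points $x_j$ depending only on $C_F$ and $v$. This is false even on $\Def^+(Q)$. Take $Q$ an axis-aligned square in $\RR^2$ and $F$ a vertical edge, so $C_F = \{x_1 \ge 0\}$ and $\lineal(C_F) = \{x_1 = 0\}$. The horizontal segments $P_a := [a, a+1]\times\{0\}$ all lie in $\Def^+(Q)$, and $\tc{C_F}(P_a) = 1$ exactly when $a = 0$. But for any fixed finite data $\{(x_j,\epsilon_j)\}$ the map $a \mapsto \sum_j \epsilon_j \one_{P_a}(x_j)$ is a step function in $a$, constant away from the finitely many values at which some $x_j$ enters or leaves $P_a$; it cannot take the value $1$ at a single $a$ and $0$ elsewhere. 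Your parenthetical claim that $P \cap (\lineal(C_F)+v)$ is ``full-dimensional in $L$'' is likewise false here: the intersection is the single point $(0,0)$. If instead you intend the points $x_j$ to depend on the finite collection $\{P_1,\ldots,P_k\}$ appearing in a given relation, the argument becomes circular: a linear combination of point evaluations takes prescribed values $(\tc{C_F+v}(P_1),\ldots,\tc{C_F+v}(P_k))$ only if that target vector lies in the span of the evaluation vectors $(\one_{P_1}(x),\ldots,\one_{P_k}(x))$ over $x\in V$, and that span is exactly the annihilator of the relation vectors $(a_1,\ldots,a_k)$---so membership of the target in it is precisely the valuativeness you are trying to prove.

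The paper's proof replaces point evaluations by half-space containment indicators $\inc{H^-}$ (equal to $1$ if $P\subseteq H^-$ and $0$ otherwise), which are shown to be valuative on all polyhedra via McMullen's hyperplane-cut criterion, and then realizes $\tc{C_F}$ not as a finite combination but as a \emph{stabilizing limit} of differences $\inc{H_i^-} - \inc{{H_i'}^-}$, where the $H_i$ are tilts of a fixed supporting hyperplane $H$ of $C_F$ and the $H_i'$ are parallel shifts chosen so that $H_i'\cap H \to \lineal(C_F)$. The deformation hypothesis enters at exactly one point: whenever $P\in\Def^+(Q)$ meets $H$ outside $\lineal(C_F)$, the face $P\cap H$ must be contained in a translate of $\lineal(C_F)$ (since the only edge directions of $Q$ lying in $H$ span $\lineal(C_F)$), so it is bounded away from $\lineal(C_F)$ and is eventually swallowed by ${H_i'}^-$ together with the rest of $P$. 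Your earlier sketches via projection to $V/L$ or fiberwise Euler characteristic are closer in spirit to something that could work, but you abandon them without resolving the pushforward issue you correctly flag.
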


Testing for tight containment in a cone does not in general define a valuative function for an arbitrary family of polyhedra.
For example, consider the cone $C = \operatorname{Cone}(\be_1, \be_1+\be_2) \subset \RR^2$ along with polyhedra $P_1 = \operatorname{Conv}(0, \be_2)$ and $P_2 = \operatorname{Conv}(0,-\be_2)$.  One has the relation $\one_{P_1}+\one_{P_2} = \one_{P_1\cup P_2} + \one_{P_1\cap P_2}$, but $0 = \tc{C}(P_1)+\tc{C}(P_2) \neq \tc{C}(P_1\cup P_2) + \tc{C}(P_1\cap P_2) = 1$.

\medskip
Our proof of \Cref{thm: tight containment is val} broadly consists of three steps: (i) tight containment for deformations can be expressed as a limit of containments in closed half-spaces, (ii) containment in a closed half-space is valuative, and (iii) taking a limit preserves valuativeness.
Let us begin by noting that the limit of a sequence of valuations with an elementary stabilization property is again a valuation.

\begin{lemma}\label{lem:limit of valuations}
Let $\mathscr P$ be a family of polyhedra in $V$, and $A$ an abelian group.
Suppose $(f_0, f_1, f_2, \ldots)$ is a sequence of valuations $\mathscr P \to A$ with the property that for any polyhedron $P\in\mathscr P$ there exists an integer $N_{P} \geq 0$ such that $f_{i}(P)=f_{N_{P}}(P)$ for all $i \geq N_P$.
    Then the function $f:\mathscr P\to A$ defined by $f(P):=f_{N_P}(P)$ is valuative.
\end{lemma}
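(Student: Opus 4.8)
The plan is to verify the defining property of valuativeness directly: I must show that if $a_1, \ldots, a_k \in \ZZ$ and $P_1, \ldots, P_k \in \mathscr P$ satisfy $\sum_{i=1}^k a_i \one_{P_i} = 0$, then $\sum_{i=1}^k a_i f(P_i) = 0$. The crucial point is that such a relation involves only \emph{finitely many} polyhedra, so the stabilization indices $N_{P_1}, \ldots, N_{P_k}$ form a finite set and we may set $N := \max\{N_{P_1}, \ldots, N_{P_k}\}$.

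Next I would observe that for each $i$ we have $N \geq N_{P_i}$, and hence by the stabilization hypothesis $f(P_i) = f_{N_{P_i}}(P_i) = f_N(P_i)$. Therefore $\sum_{i=1}^k a_i f(P_i) = \sum_{i=1}^k a_i f_N(P_i)$. Since $f_N$ is by assumption a valuation on $\mathscr P$ and $\sum_{i=1}^k a_i \one_{P_i} = 0$, the right-hand side vanishes, which gives $\sum_{i=1}^k a_i f(P_i) = 0$. This establishes that $f$ is valuative.

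I do not anticipate a genuine obstacle here; the argument is essentially the remark that valuativeness is a property tested one finite linear relation at a time, combined with the fact that finitely many indices always admit a common upper bound. The only subtlety worth flagging explicitly is that the pointwise-stabilization hypothesis (rather than, say, mere pointwise convergence in some topology on $A$) is exactly what allows the passage to a single fixed valuation $f_N$ on the finite set of polyhedra appearing in any given relation; I would state this clearly so that the reader sees where the hypothesis is used.
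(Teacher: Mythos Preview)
Your proposal is correct and is essentially identical to the paper's proof: take a finite relation $\sum_i a_i\one_{P_i}=0$, set $N=\max_i N_{P_i}$, use stabilization to replace each $f(P_i)$ by $f_N(P_i)$, and conclude by valuativeness of $f_N$. There is nothing to add.
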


\begin{proof}
Suppose $a_1, \ldots, a_k \in \ZZ$ and $P_1, \ldots, P_k \in \mathscr P$ satisfies $\sum_{i=1}^k a_i \one_{P_i} = 0$.
We need to show that $\sum_{i=1}^k a_i f(P_i) = 0$.  Writing $N = \max\{N_{P_i} \mid 1\leq i \leq k\}$, we have $\sum_{i=1}^k a_i f(P_i) = \sum_{i=1}^k a_i f_N(P_i)$ by definition of $f$, and $\sum_{i=1}^k a_i f_N(P_i) = 0$ since $f_N$ is valuative.
\end{proof}

\begin{notation}
We denote the function $f$ in \Cref{lem:limit of valuations} by $\lim_{i\to\infty} f_i$.
\end{notation}

For any family of polyhedra, containment in a closed half-space is a valuation.

\begin{lemma}\label{lem:half space containment}
Let $H^+\subset V$ be a closed affine half-space, and let $\mathscr P$ be a family of polyhedra in $V$.
The function $\inc{H^+}:\mathscr {P}\to\ZZ$ defined as
        $$
        \inc{H^+}(P) =
            \begin{cases}
            1 & \textnormal{if }P\subseteq H^+\\
            0 &\textnormal{otherwise}
            \end{cases}
        $$
is a valuation on $\mathscr {P}$.
\end{lemma}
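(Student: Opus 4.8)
The statement to prove is Lemma~\ref{lem:half space containment}: containment in a closed half-space $H^+$ is a valuation on any family $\mathscr P$ of polyhedra.

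\textbf{Plan of proof.} The task is to verify the linear-relation characterization of valuativeness: whenever $\sum_{i=1}^k a_i \one_{P_i} = 0$ with $a_i \in \ZZ$ and $P_i \in \mathscr P$, we must conclude $\sum_{i=1}^k a_i\,\inc{H^+}(P_i) = 0$. The key observation is that containment of a polyhedron $P$ in $H^+$ can be detected \emph{pointwise}: writing $H^+ = \{x \in V \mid \langle y, x\rangle \geq a\}$, a polyhedron $P$ fails to be contained in $H^+$ precisely when it meets the \emph{open} complementary half-space $U := \{x \in V \mid \langle y, x \rangle < a\}$. Since $P$ is convex (indeed a polyhedron) and $U$ is a nonempty open convex set, $P \subseteq H^+$ if and only if $\one_P|_U \equiv 0$, i.e. $\inc{H^+}(P) = 1 - \max_{x \in U} \one_P(x)$. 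The subtlety is that we cannot simply evaluate the indicator relation at a single point and get the $\max$; instead I will extract the information from an integral (counting measure) over a suitable finite or controlled set of points.

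\textbf{Key steps.} First, fix the linear relation $\sum a_i \one_{P_i} = 0$. Restricting to the open half-space $U$, we get $\sum a_i \one_{P_i}|_U = 0$. I claim it suffices to find, for each $i$ with $P_i \not\subseteq H^+$, a point of $U$ lying in $P_i$, in a way that lets the relation ``see'' the count $\sum_{i: P_i \not\subseteq H^+} a_i$. Concretely: among the finitely many polyhedra $P_1, \dots, P_k$, consider the common refinement of their facet hyperplanes together with $H$; this partitions $V$ into finitely many relatively open polyhedral cells, and each $\one_{P_i}$ is constant on each cell. Pick one point $x_J$ in the relative interior of each cell $J$. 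Then $\sum_i a_i \one_{P_i}(x_J) = 0$ for every cell $J$. Now for each $i$ with $P_i \not\subseteq H^+$, the set $P_i \cap U$ is a nonempty polyhedron (intersection of the polyhedron $P_i$ with an open half-space that it meets), hence it contains the relative interior of at least one cell $J$; on that cell $\one_{P_i}(x_J) = 1$. The cleanest way to package this is: sum the relation $\sum_i a_i \one_{P_i}(x_J) = 0$ over all cells $J$ contained in $U$, i.e. over cells on which the coordinate $\langle y, \cdot\rangle$ is $< a$. Actually, more robustly, I will instead integrate: let $\mu$ be a measure on $V$ with $\mu(U) > 0$ and $\mu(\partial\text{-sets of measure zero})$; then $\int_U \sum_i a_i \one_{P_i}\, d\mu = 0$ gives $\sum_i a_i \mu(P_i \cap U) = 0$. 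This still is not the indicator count, so I will instead argue directly by induction on $k$ using that $\inc{H^+}(P_i)$ equals $1$ minus the indicator, evaluated on $U$, of the event ``$P_i$ meets $U$'', and that on a family of polyhedra ``meets the open convex set $U$'' behaves like an indicator relation too.

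\textbf{Cleanest route, and the main obstacle.} The most painless argument: observe that $\one_{H^+}$ itself, while not compactly supported, still makes the product $\one_P \cdot \one_{H^+} = \one_{P \cap H^+}$ an indicator of a polyhedron, and similarly $\one_P \cdot \one_{U} = \one_{P \cap U}$; multiplying the given relation $\sum a_i \one_{P_i} = 0$ by $\one_U$ yields $\sum a_i \one_{P_i \cap U} = 0$ as an identity of functions $V \to \ZZ$. Now I invoke the following elementary fact: for polyhedra $Q_1, \dots, Q_k$, if $\sum a_i \one_{Q_i} = 0$ then $\sum_{i: Q_i \neq \emptyset} a_i = 0$ — this follows because the ``Euler characteristic with compact supports'' functional $\one_Q \mapsto (-1)^{\dim Q}$... no: the correct elementary fact is that the functional sending $\one_Q$ to $1$ if $Q \neq\emptyset$ is \emph{not} linear, so I must instead use that evaluating at a generic point in the relative interior of the intersection poset's finest cells and summing appropriately recovers $\sum_{i:\, x \in Q_i} a_i = 0$ for each such point. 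Putting $Q_i = P_i \cap U$: for any point $x$, $\sum_{i:\, x \in P_i} a_i \one_{U}(x)= 0$, so for $x \in U$ we get $\sum_{i:\, x\in P_i} a_i = 0$. Choosing $x$ in a cell contained in $\bigcap_{i \in S} P_i \setminus \bigcup_{i \notin S} P_i$ for $S = \{i : P_i \subseteq H^+\}$'s complement when nonempty handles everything — but $\sum_{i \notin S} a_i$ is exactly what we want to be $0$, and it equals $\sum_{i: P_i \not\subseteq H^+} a_i = \sum_i a_i - \sum_i a_i \inc{H^+}(P_i)$, while $\sum_i a_i = 0$ by evaluating the original relation at any point of $H^+$. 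The main obstacle is being careful that such a cell (in the relative interior of $\bigcap_{i \in S^c} P_i$, avoiding all other $P_j$, inside $U$) actually exists; this needs a genericity/dimension argument using that each $P_i \cap U$ is nonempty and open-in-$P_i$, and I expect it to require the finite arrangement of facet hyperplanes of the $P_i$'s together with $H$ to produce a full-dimensional witness cell, which is a routine but not entirely trivial point-chasing argument.
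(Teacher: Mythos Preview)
Your direct approach has two concrete errors, and the second one is fatal.

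First, the claim ``$\sum_i a_i = 0$ by evaluating the original relation at any point of $H^+$'' is wrong: evaluating $\sum_i a_i \one_{P_i}=0$ at a point $x$ yields $\sum_{i:\,x\in P_i} a_i = 0$, which is $\sum_i a_i$ only if $x$ lies in every $P_i$, and no such $x$ need exist.

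Second, and more seriously, the witness cell you need simply does not exist in general. You want a point $x\in U$ lying in \emph{every} $P_i$ with $P_i\not\subseteq H^+$ and in \emph{none} of the others. Take the relation
\[
\one_{[0,3]} - \one_{[0,1]} - \one_{[1,2]} - \one_{[2,3]} + \one_{\{1\}} + \one_{\{2\}} = 0
\]
in $\RR$ and let $H^+=[10,\infty)$. None of the six polyhedra lies in $H^+$, so your set $S$ is all of $\{1,\dots,6\}$, yet $\bigcap_i P_i = \{1\}\cap\{2\}=\emptyset$. You flagged this step as ``routine but not entirely trivial''; in fact it is false, not merely unproven. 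The underlying issue is one you yourself noted and then ignored: the functional $\one_Q\mapsto [Q\neq\emptyset]$ cannot be recovered by evaluation at any single point, and your argument ultimately tries to do exactly that for $Q=P_i\cap U$.

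The paper avoids all of this by quoting a structural result of McMullen: on the family of \emph{all} polyhedra, a function $f$ is valuative if and only if $f(P)+f(P\cap L)=f(P\cap L^+)+f(P\cap L^-)$ for every hyperplane $L$. Granting that, one checks this single identity for $f=\inc{H^+}$ by the two-line observation that $P\cap L^{\pm}\subseteq H^+$ forces $P\cap L\subseteq H^+$, and both together force $P\subseteq H^+$. If you want to salvage a direct argument, one route that actually works is to push the relation forward along the linear functional defining $H^+$ (using the Euler-characteristic valuation on fibers to get $\sum_i a_i \one_{\pi(P_i)}=0$ in $\ZZ^{\RR}$) and then argue in dimension one by comparing evaluations at $t$ and $t-\epsilon$ to show $\sum_{i:\,\inf\pi(P_i)=c}a_i=0$ for every $c$; but note this already imports the nontrivial fact that $\chi$ is a valuation, which is of the same order of difficulty as McMullen's criterion.
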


\begin{proof}
If a function $f$ on the family of \emph{all} polyhedra is valuative, then its restriction $f|_{\mathscr P}$ is a valuation on $\mathscr P$.  We thus show that $\inc{H^+}$ is a valuative function on the family of all polyhedra in $V$.  For the family of all polyhedra, \cite[Propositions 3.2 \& 3.3]{McM09} states that a function $f$ is valuative if and only if it satisfies
\[
f(P) + f(P\cap L) = f(P \cap L^+) + f(P \cap L^-)
\]
for every polyhedron $P\subseteq V$ and hyperplane $L\subset V$.
Now, for an arbitrary polyhedron $P \subseteq V$ and a hyperplane $L\subset V$, the relation
\[
\inc{H^+}(P) + \inc{H^+}(P\cap L) = \inc{H^+}(P\cap L^+) + \inc{H^+}(P \cap L^-)
\]
follows from observing that:
\[
\begin{split}
P\cap L^+ \subseteq H^+ \textnormal{ \ or \ } P\cap L^- \subseteq H^+ &\implies P\cap L \subseteq H^+, \quad \textnormal{and}\\
P\cap L^+ \subseteq H^+ \textnormal{ and } P\cap L^- \subseteq H^+ &\implies P\subseteq H^+.
\end{split}
\]
The result follows.
\end{proof}

Lastly, we note the following alternate characterization of tight containment for extended deformations.
See \Cref{fig:Hyperplane condition} for an illustration.

    \begin{figure}[h]
        \centering
        \includegraphics[width=.7\textwidth]{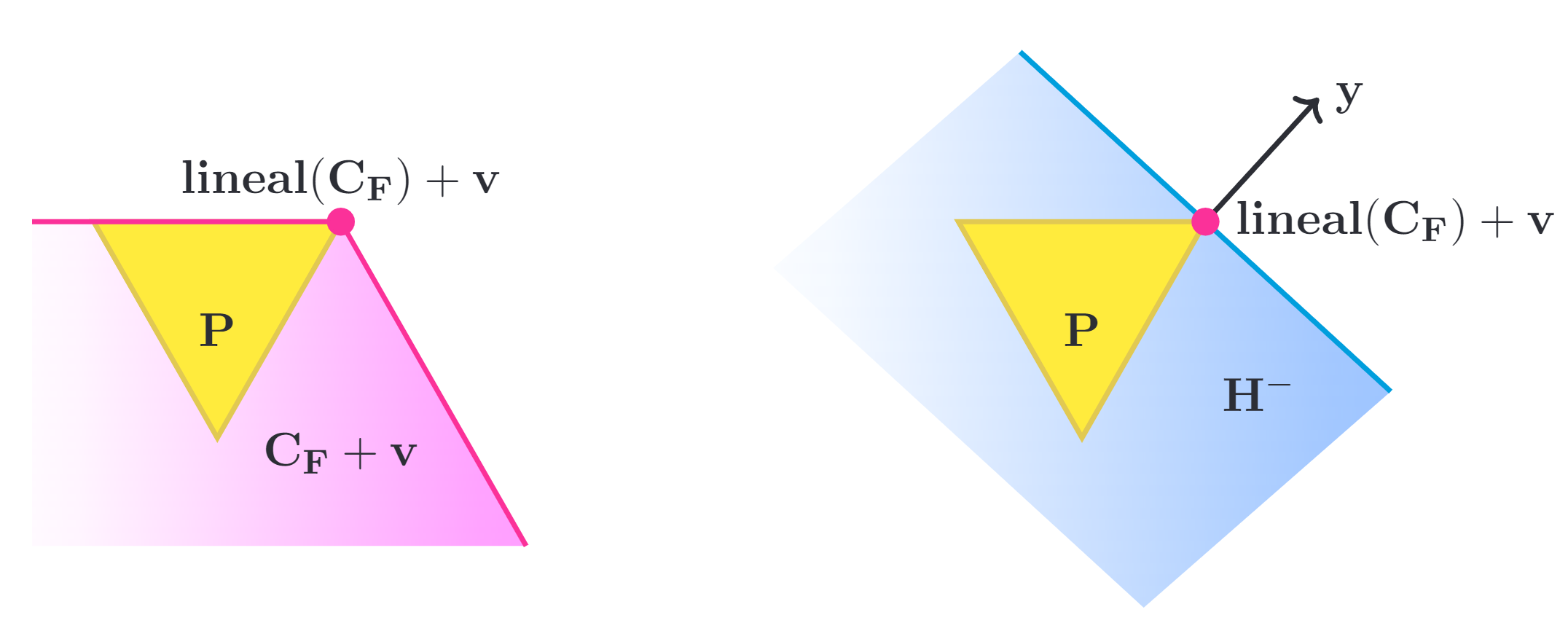}
        \caption{\small On the left, the polytope $P$ is tightly contained in the affine cone $C_F+v$.
        On the right, $P$ is contained in the affine half-space $H^-$ and intersects the boundary $H$ at $\lineal(C_F)+v$.
        (here the normal vector of $H$ must be chosen from the interior of the dual of $C_F$).
        \Cref{lem: tangent cone tight containment is half space containment} says that if $C_F$ is a tangent cone of $Q$ and $P\in\Def^+(Q)$, then these two conditions are equivalent.}
        \label{fig:Hyperplane condition}
    \end{figure}
    
\begin{lemma}\label{lem: tangent cone tight containment is half space containment}
Let $F$ be a face of a polyhedron $Q\subset V$, and $v\in V$.
Let $y\in\relint(\sigma_F)$ be a linear functional that is maximized on $Q$ at the face $F$, and let $H$ be the affine hyperplane $\{x\in V \mid \langle y,x\rangle = \langle y,v\rangle\}$. 
Then, for $P\in\Def^+(Q)$, the affine cone $C_F+v$ tightly contains $P$ if and only if $P\subseteq H^-$ and $\varnothing\subsetneq P \cap H \subseteq \lineal(C_F)+v$.
\end{lemma}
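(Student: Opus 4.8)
The plan is to prove the two implications separately; only the direction ``$P\subseteq H^-$ and $\varnothing\subsetneq P\cap H\subseteq\lineal(C_F)+v$ $\implies$ tight containment'' will use the hypothesis $P\in\Def^+(Q)$. After translating by $-v$ we may assume $v=0$, so that $H=y^\perp$ and $H^-=\{x\in V\mid\langle y,x\rangle\le 0\}$. I would first record two facts about the cone $C_F=\sigma_F^\vee$ alone. (a) $C_F\subseteq H^-$: since $y$ is maximized on $Q$ at $F$, every generator $q-u$ ($u\in F$, $q\in Q$) of $C_F=\operatorname{Cone}(q-u)$ has $\langle y,q-u\rangle\le 0$. (b) $C_F\cap H=\lineal(C_F)$: if $x\in C_F=\sigma_F^\vee$ and $\langle y,x\rangle=0$, then the linear functional $\langle\,\cdot\,,x\rangle$ is nonpositive on $\sigma_F$ and vanishes at the relative interior point $y$, hence vanishes on $\operatorname{span}(\sigma_F)$, so $x\in\sigma_F^\perp=\lineal(C_F)$; the reverse inclusion is immediate because $y\in\sigma_F$. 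Untranslating, $C_F+v\subseteq H^-$ and $(C_F+v)\cap H=\lineal(C_F)+v$.

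For the forward direction, suppose $C_F+v$ tightly contains $P$. Then $P\subseteq C_F+v\subseteq H^-$, and intersecting with $H$ and using fact (b) gives $P\cap H\subseteq(C_F+v)\cap H=\lineal(C_F)+v$. Finally $\varnothing\ne P\cap(\lineal(C_F)+v)\subseteq P\cap H$ because $\lineal(C_F)+v\subseteq H$, so $P\cap H\ne\varnothing$. (This direction does not use that $P$ is a deformation of $Q$.)

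For the reverse direction, assume $P\subseteq H^-$ and $\varnothing\subsetneq P\cap H\subseteq\lineal(C_F)+v$. Since $P\subseteq H^-$, the functional $y$ is bounded above on $P$ and attains its maximum exactly along the nonempty face $P_y:=P\cap H$, so $y$ lies in the relative interior of the normal cone $\sigma_{P_y}\in\Sigma_P$. Because $P\in\Def^+(Q)$, the cone $\sigma_{P_y}$ is a union of cones of $\Sigma_Q$; since $\sigma_F$ is the unique cone of $\Sigma_Q$ whose relative interior contains $y$, and since every cone of the fan $\Sigma_Q$ that contains $y$ has $\sigma_F$ as a face, we get $\sigma_F\subseteq\sigma_{P_y}$, and dually the tangent cone of $P$ at $P_y$ satisfies $(\sigma_{P_y})^\vee\subseteq\sigma_F^\vee=C_F$. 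Now choose any $p_0\in P_y$; by hypothesis $p_0\in\lineal(C_F)+v$, hence $C_F+p_0=C_F+v$. Using that a polyhedron is always contained in the affine tangent cone at any of its faces, $P\subseteq(\sigma_{P_y})^\vee+p_0\subseteq C_F+p_0=C_F+v$. Finally $P\cap(\lineal(C_F)+v)\supseteq P\cap H\ne\varnothing$, so $C_F+v$ tightly contains $P$.

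Essentially all of the content sits in the reverse direction, and within it the single non-formal step is the passage $\sigma_F\subseteq\sigma_{P_y}$ — that is, the extended-deformation hypothesis forces the normal cone of $P$ at the $y$-maximal face to contain the whole cone $\sigma_F$ of $\Sigma_Q$; everything else is routine manipulation of cones and their duals. The one further subtlety worth stating carefully is that $C_F$ must be translated by a point of $P_y$ that actually lies in $\lineal(C_F)+v$, which is exactly what the condition $P\cap H\subseteq\lineal(C_F)+v$ supplies.
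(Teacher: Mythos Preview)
Your proof is correct and follows essentially the same approach as the paper: both establish $(C_F+v)\cap H=\lineal(C_F)+v$, dispatch the forward implication directly, and for the converse use the extended-deformation hypothesis to get $\sigma_F\subseteq\sigma_{P_y}$, dualize to $C_{P_y}\subseteq C_F$, and then translate by a point of $P\cap H$. Your argument is in fact marginally cleaner in two places: your verification of fact~(b) via the duality $\sigma_F^\perp=\lineal(C_F)$ avoids appealing to vertices of $F$, and your choice of an arbitrary $p_0\in P_y$ rather than a vertex of $P_y$ sidesteps the (harmless but unaddressed) issue that $P_y$ need not have vertices when $P$ has nontrivial lineality space.
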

    
\begin{proof}
First, we note that $(C_F+v)\cap H = \lineal(C_F)+v$.
This is equivalent to the equality of subspaces $\lineal(C_F)=\{x\in C_F \mid \langle y,x\rangle=0\}$, which is clear since both subspaces can be written as $\Span\{w-w'\mid w,w'\in\ver(F)\}$.

Next, suppose that $P\in\Def^+(Q)$ is tightly contained in $C_F+v$.
Then $(\lineal(C_F)+v)\cap P$ is nonempty, so $H\cap P$ is also nonempty.
Furthermore, $P\subseteq C_F+v$ implies that $P\subseteq H^-$.
Finally, $P\cap H\subseteq (C_F+v)\cap H = \lineal(C_F) +v$.

For the converse, suppose that $P\subseteq H^-$ and $\emptyset\subsetneq P\cap H\subseteq\lineal(C_F) + v$.
Since $P\cap \lineal(C_F)+v\ne\emptyset$, it remains to show that $P\subseteq C_F+v$.
First, since $P\subseteq H^-$, the intersection $P\cap H$ is a face $F'$ of $P$, and $y$ is contained in the normal cone of $F'$.
As $P$ is a deformation of $Q$, the normal cone of $F'$ is a union of normal cones in $\Sigma_Q$ including $\sigma_F$.
Since the normal cone of $F'$ in $\Sigma_P$ contains the normal cone of $F$ in $\Sigma_Q$, the tangent cone $C_F$ of $Q$ at $F$ must contain the tangent cone $C_{F'}$ of $P$ at $F'$.
Let $u$ be a vertex of $F'$, so $u\in\lineal(C_F)+v$.
Since a polyhedron is always contained in all of its affine tangent cones, we have $P\subseteq C_{F'}+u\subseteq C_F+u = C_F+v$.
\end{proof}
    
We are now ready to prove \Cref{thm: tight containment is val}.
 
   \begin{proof}[Proof of \Cref{thm: tight containment is val}]
    Since the ``translation by $v$'' sending $P \in \Def^+(Q)$ to $\one_{P-v}$ is valuative, we can assume without loss of generality that $v=0$.
    Let $y\in\relint(\sigma_F)$ be a linear functional that is maximized on $Q$ at the face $F$, and define the hyperplane $H:=\{x\in V\mid\langle y,x\rangle = 0\}$ and the subspace $\ell:= \lineal(C_F)$.
    By Lemma \ref{lem: tangent cone tight containment is half space containment}, we can express the tight containment function $\tc{C_F}$ as
        $$
            \tc{C_F}(P) = \begin{cases}
            1, & P \subseteq H^-$ and $\emptyset \subsetneq P \cap H \subseteq \ell\\
            0, & \text{otherwise}.
            \end{cases}
        $$
    To show that this is a valuation, we will express $\tc{C_F}$ as a limit of valuations in the sense of \Cref{lem:limit of valuations}.
    We will construct a sequence of pairs of half-spaces $(H_i^-,{H_i'}^-)$ such that for large enough $i$, the polyhedra $P\in\Def^+(Q)$ contained in $H_i^-$ but not in ${H_i'}^-$ are exactly those with $\tc{C_F}(P) = 1$; in other words, $\tc{C_F}=\lim_{i\to\infty}(\inc{H_i^-}-\inc{H_i'^-})$.
    
    First, consider the case where $C_F$ itself is a half-space, that is, where the face $F$ has codimension $1$.
    Then the half-space $H^-$ is equal to $C_F$, and $\lineal(C_F)=H$.
    In this case, we can construct a sequence of affine half-spaces $\{H_i^-\}$ where each $H_i^-$ is contained in $H^-$ and $\lim_{i\to\infty} H_i^-=H^-$.
    The sequence of functions $\tc{C_F} = \lim_{i\to\infty} (\inc{H^-}-\inc{H_i^-})$ is a valuation.
    Here the functions $\inc{H^-}$ and $\inc{H_k^-}$ are the half-space containment functions as defined in Lemma \ref{lem:half space containment}.
    
    Now suppose that $C_F$ is not a half-space, and let $\{y_i\}_{i\in\NN}$ be a sequence of vectors in $\relint(\sigma_F)$ converging to $y$ that are not scalar multiples of $y$.
    For each $i$, define the hyperplane $H_i:=\{x\in V \mid \langle y_i, x\rangle = 0\}$.
    It is clear that this sequence of half-spaces $H_i^-$ converges pointwise to $H^-$ and that $\ell\subseteq H_i$ for all $i$.
    Let $d(X,Y)$ denote the minimum distance between two subsets $X,Y\subseteq V$, and construct a second sequence of affine hyperplanes denoted by $H_i'$ with the properties that ${H_i'}^- \subsetneq H_i^-$ and $\lim_{i \to \infty} d(H_i' \cap H, \ell) = 0$ (see \Cref{fig:H_i'}).
    The latter condition is well-defined because the normals $y_i$ were chosen not be scalar multiples of $y$ so that the intersections $H_i'\cap H$ are nonempty.
    These conditions on $H_i'$ guarantee that for any set $X \subset H$ that is bounded away from $\ell$, for sufficiently large $i$, if $H_i^-$ contains $X$ then so does ${H_i'}^-$.
    
    \begin{figure}
        \centering
        \resizebox{.6\linewidth}{!}{
\begin{tikzpicture}[scale=1.8,
  	point/.style={inner sep=1pt,circle,draw=black,fill=black,thick,anchor=base},
  	hyp/.style={color=mycyan,very thick},
  	affhyp/.style={color=mymagenta,very thick}]

\clip (-2,-1) rectangle (2.5,1.5);

\draw[very thick] (-4,0) to (4,0);

\draw[color=mymagenta!80!black, very thick, dashed] (-4,-3) to (4,1);
\draw[color=mycyan!80!black, very thick, dashed] (-4,-1.875) to (4,1.125);
\draw[color=myyellow!80!black, very thick, dashed] (-4,-1.125) to (4,.875);

\draw[color=mymagenta!80!black, very thick] (-4,-2) to (4,2);
\draw[color=mycyan!80!black, very thick] (-4,-1.5) to (4,1.5);
\draw[color=myyellow!80!black, very thick] (-4,-1) to (4,1);

\node[point] at (0,0) {};

\node[inner sep=2pt,circle,draw=mymagenta!80!black,fill=mymagenta!80!black,thick,anchor=base] at (2,0) {};
\node[inner sep=2pt,circle,draw=mycyan!80!black,fill=mycyan!80!black,thick,anchor=base] at (1,0) {};
\node[inner sep=2pt,circle,draw=myyellow!80!black,fill=myyellow!80!black,thick,anchor=base] at (.5,0) {};

\node[anchor=south] at (0,0) {$\boldsymbol{\ell}$};
\node[anchor=south] at (-1,0) {$\mathbf{H}$};

\end{tikzpicture}
}
        \caption{\small The first three elements in an example of the sequences of hyperplanes $\{H_i\}$ and $\{H_i'\}$, as described in the proof of \Cref{thm: tight containment is val}.
        The hyperplanes $H_i$ are drawn as solid lines, and the translations $H_i'$ are drawn as dashed lines.
        Observe that not only do the dashed hyperplanes $H_i'$ get closer and closer to their solid counterparts $H_i$, but the intersection points $H_i'\cap H$ also approach $\ell$, which is a stronger condition.}
        \label{fig:H_i'}
    \end{figure}
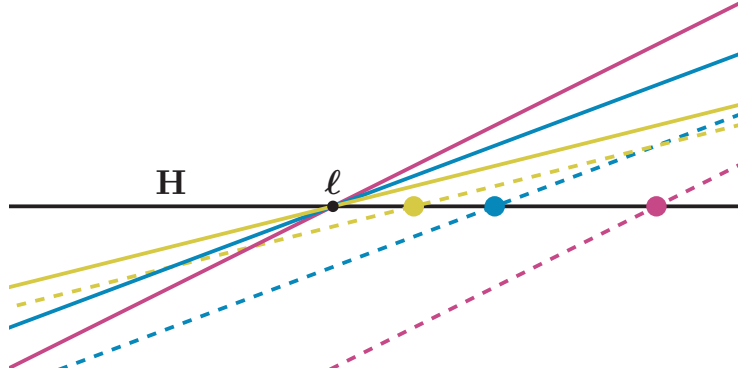
    
    We now show that the sequence of valuations $\{\inc{H_i^-} - \inc{{H_i'}^-}\}$ converges to $\tc{C_F}$ in the sense of \Cref{lem:limit of valuations}.
    We do this by partitioning $\Def^+(Q)$ into four subfamilies (\Cref{fig:tight containment proof}) and showing convergence for each subfamily.
    We then invoke Lemma \ref{lem:limit of valuations} to show that $\tc{C_F}$ is a valuation.
    Let $P\in\Def^+(Q)$.
        \begin{enumerate}[(I)]
            \item $P\subseteq H^-$ and $\emptyset\subsetneq P\cap H\subseteq \ell$.\\
            By Lemma \ref{lem: tangent cone tight containment is half space containment}, this case can be equivalently stated as the case where $P$ is tightly contained in $C_F$; hence we want to show that $\lim_{i\to\infty}(\inc{H_i^-}(P)-\inc{{H_i'}^-}(P))=1$.
            Since $y_i\in\relint(\sigma_F)$, it follows that $C_F$ (and hence $P$) is contained in $H_i^-$ for all $i$.
            Furthermore, since $P\cap \ell\ne\emptyset$, it holds that $P\not\subseteq {H_i'}^-$ for all $i$.
            Hence $\inc{H_i^-}(P)-\inc{{H_i'}^-}(P)=1$ for all $i$.
            
            \item $P\not\subseteq H^-$.\\
            Since the $H_i^-$ converge to $H^-$, for sufficiently large $i$ we will have that $P\not\subseteq H_i^-$.
            Since ${H_i'}^-\subset H_i^-$, we will also have $P\not\subseteq {H_i'}^-$.
            Thus $\lim_{i\to\infty}(\inc{H_i^-}(P)-\inc{{H_i'}^-}(P))=0$.
            
            \item $P\subseteq H^-$ and $P\cap H=\emptyset$.\\
            Since the $H_i^-$ converge to $H^-$, for sufficiently large $i$ we will have that $P\subseteq H_i^-$.
            Since $P$ is closed, $P$ is bounded away from $H$, and hence $P\subseteq {H_i'}^-$ for sufficiently large $i$.
            Thus $\lim_{i\to\infty}(\inc{H_i^-}(P)-\inc{{H_i'}^-}(P))=0$.
            
            \item $P\subseteq H^-$ and $\emptyset\subsetneq P\cap H\not\subseteq \ell$.\\
            If $P \not \subseteq H_i^-$ then we also have $P \not \subseteq {H_i'}^- \subset H_i^-$.
            Thus $\inc{H_i^-}(P) - \inc{{H_i'}^-}(P) = 0$.
            Suppose now that $P \subset H_i^-$.
            Since $P\in\Def^+(Q)$, all edge directions of $P$ must be edge directions of $Q$.
            Furthermore, the only edge directions of $Q$ that are contained in $H^-$ are the ones generating $C_F$, and so the only edge directions contained in $H$ must necessarily be the ones in $\lineal(C_F)=\ell$.
            Thus $P\cap H$ must be contained in an affine translation of $\ell$, and $P$ is tightly contained in the affine cone $(P\cap H) + C_F$.
            By construction, the collection of edge directions of $Q$ contained in $H^-$ is the same as the collection contained in $H_i^-$.
            Thus if ${H_i'}^-$ contains $P \cap H$, then it also contains $P$. 
            Since $P \cap H$ is contained in a translation of $\ell$, it is bounded away from $\ell$.
            As we saw in the construction of ${H_i'}^-$, this means that for sufficiently large $i$, the half-space ${H_i'}^-$ contains $P \cap H$ if $H_i^-$ contains it.
            This means that ${H_i'}^-$ contains $P$.
            Hence $\lim_{i\to\infty}(\inc{H_i^-}(P)-\inc{{H_i'}^-}(P))=0$.
        \end{enumerate}
        This concludes the proof of \Cref{thm: tight containment is val}.
    \end{proof}
        
        \begin{figure}[h]
        \centering
        \begin{subfigure}{.2\linewidth}
            \centering
            \includegraphics[width=\linewidth]{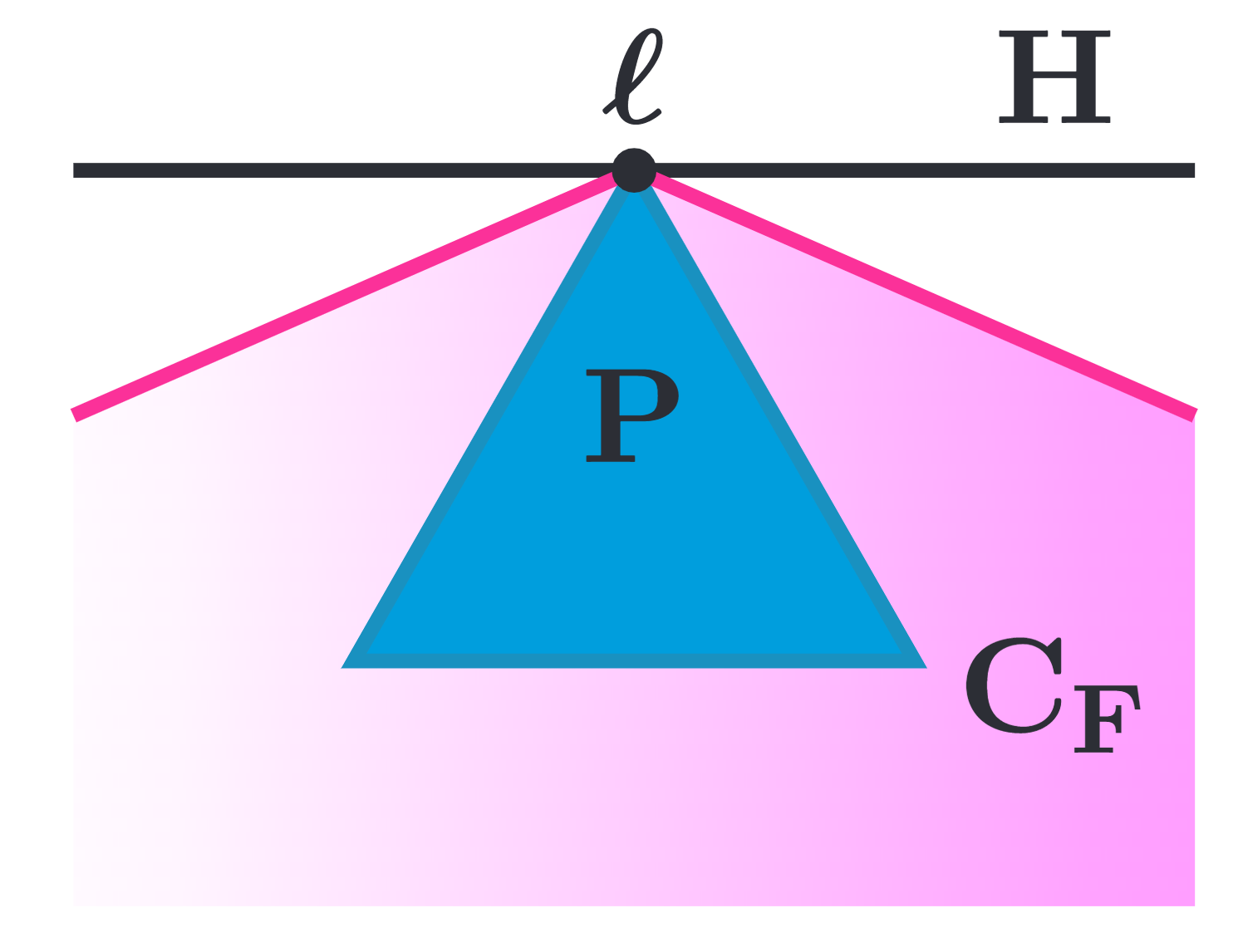}
            \caption{}
            \label{fig:case1}
        \end{subfigure}
        \hspace{1mm}
        \begin{subfigure}{.2\linewidth}
            \centering
            \includegraphics[width=\linewidth]{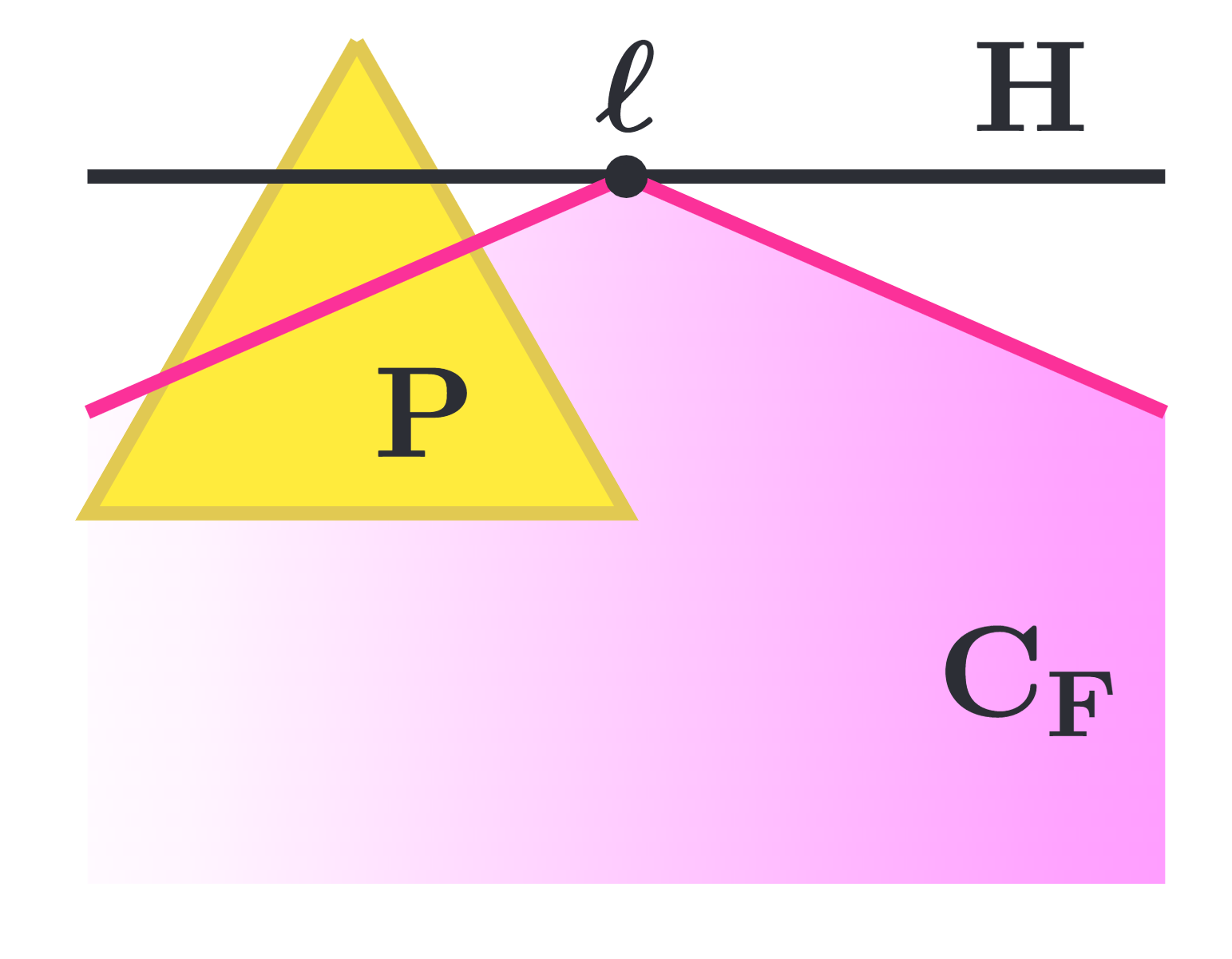}
            \caption{}
            \label{fig:case2}
        \end{subfigure}
        \hspace{1mm}
        \begin{subfigure}{.2\linewidth}
            \centering
            \includegraphics[width=\linewidth]{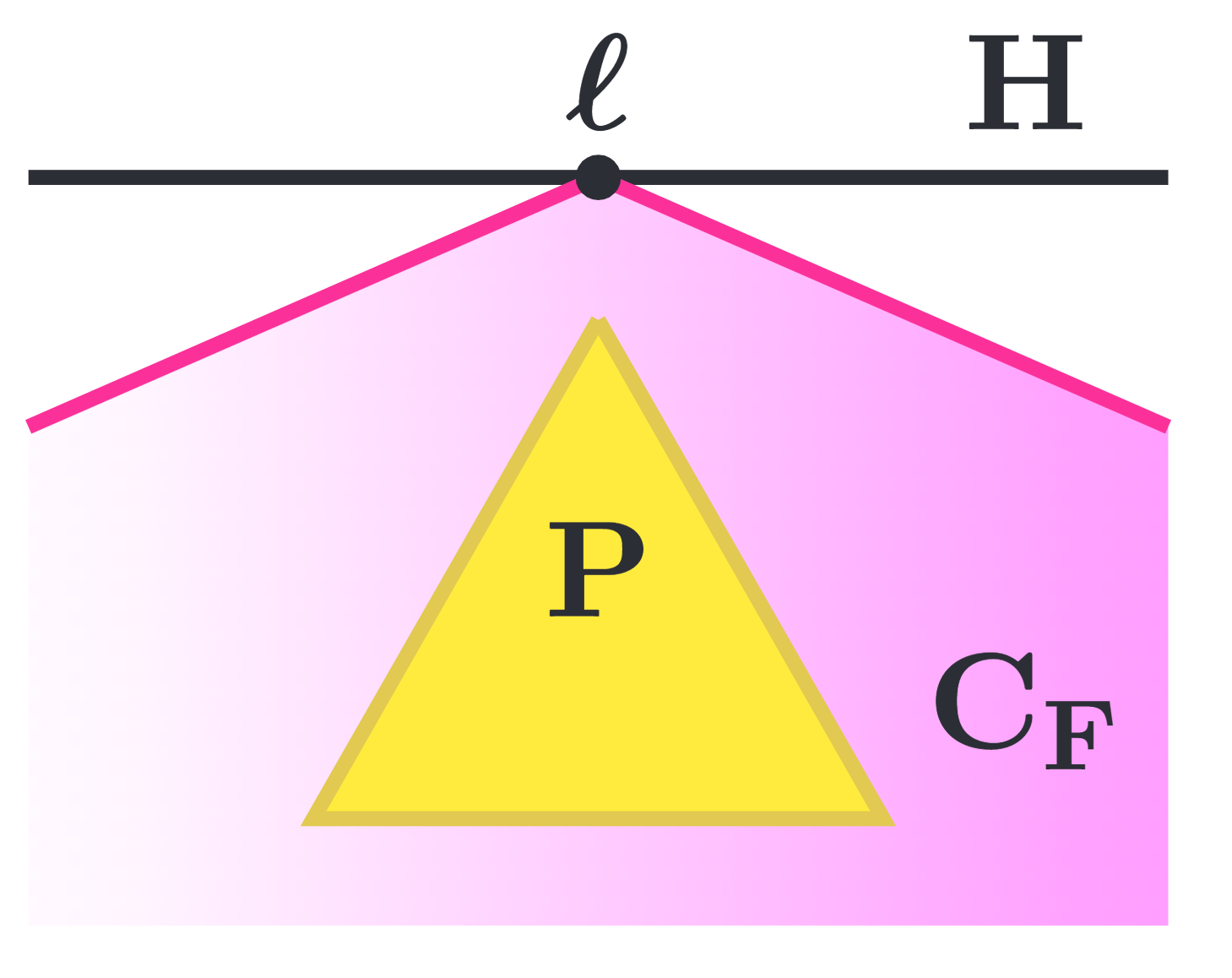}
            \caption{}
            \label{fig:case3}
        \end{subfigure}
        \hspace{1mm}
        \begin{subfigure}{.2\linewidth}
            \centering
            \includegraphics[width=\linewidth]{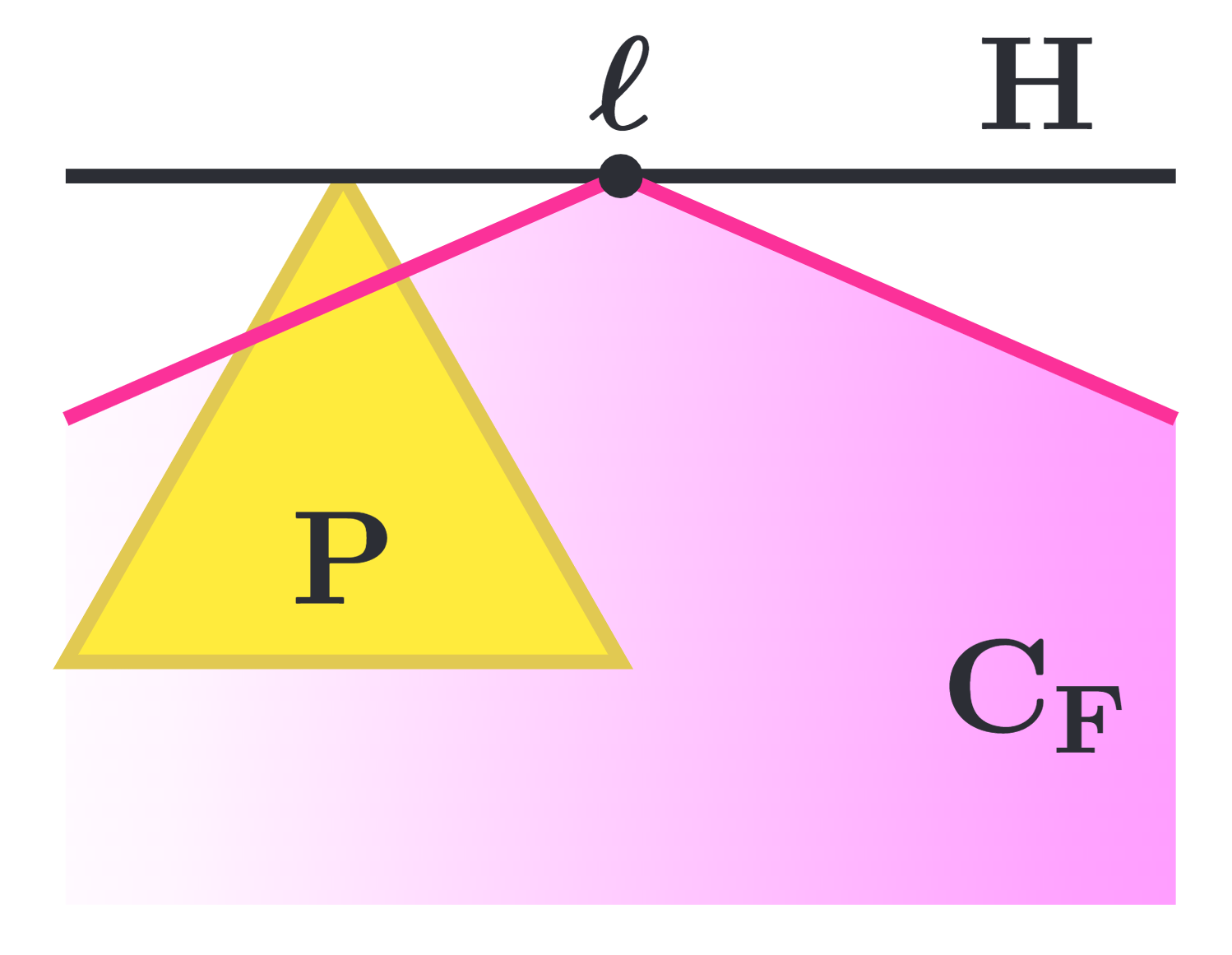}
            \caption{}
            \label{fig:case4}
        \end{subfigure}
        \caption{Example of each of the four cases from the proof of \Cref{thm: tight containment is val}.}
        \label{fig:tight containment proof}
    \end{figure}

\subsection{A basis for the indicator group of  extended deformations}\label{subsec:valdefbasis}

Recall that $\tran(\Sigma_Q^\vee) = \{C+v \mid C\in \Sigma_Q^\vee,\ v \in V\}$ denotes the set of all translates of tangent cones of a polyhedron $Q$.

\begin{theorem}\label{thm:tangent cone basis}
Let $Q\subset V$ be a polyhedron.  The collection $\{\one_{C+v} \mid C+v \in \tran(\Sigma_Q^\vee)\}$ of indicator functions of translations of tangent cones of $Q$ is a basis for the $\ZZ$-module $\ind(\Def^+(Q))$.
\end{theorem}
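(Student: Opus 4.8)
The plan is to prove the statement in two parts: spanning and linear independence.

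\medskip
\textbf{Spanning.} I would first show that every extended deformation $P \in \Def^+(Q)$ can be written as a $\ZZ$-linear combination of indicator functions of affine tangent cones. The natural tool is the \emph{Brianchon--Gram relation} (or its polyhedral generalization due to Lawrence and others): for any polyhedron $P$,
\[
\one_P = \sum_{F \text{ face of } P} (-1)^{\dim F} \one_{C_F + F},
\]
where $C_F + F$ is the affine tangent cone of $P$ at $F$. (For unbounded polyhedra one uses the version valid when $P$ is line-free, or passes to the closure of tangent cones; I would cite the appropriate form, e.g.\ from \cite{McM09} or Barvinok's book.) The key point is that since $P \in \Def^+(Q)$, each normal cone of $P$ is a union of normal cones of $Q$, so dually each tangent cone $C_F$ of $P$ is an intersection of tangent cones of $Q$ along a common face — in particular $C_F$ is itself one of the cones appearing in $\Sigma_Q^\vee$ after we account for translates. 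More precisely, the cone $\sigma_F \in \Sigma_P$ is a union of maximal cones of $\Sigma_Q$ but is itself a cone of the coarsened fan; its dual $C_F$ is then exactly $C_{F'}$ for the corresponding face $F'$ of $Q$ (the tangent cone only depends on the normal cone, viewed as a cone, not on how finely it is subdivided). Hence $C_F + F \in \tran(\Sigma_Q^\vee)$, and the Brianchon--Gram expansion exhibits $\one_P$ in the claimed span. I should be slightly careful about whether $P$ has lines: if $\lineal(\Sigma_Q)\neq 0$ then all deformations share this lineality, and one works modulo it or uses the line-free quotient; the tangent cones and the relation descend correctly.

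\medskip
\textbf{Linear independence.} This is where I expect the main obstacle. Suppose $\sum_{i} a_i \one_{C_i + v_i} = 0$ is a finite nontrivial relation among indicator functions of translates of tangent cones of $Q$. I would argue by looking at a cone of maximal dimension (equivalently, minimal lineality, or one whose tangent cone is "pointiest") among those with $a_i \neq 0$, or rather by a careful "leading term" / support argument. A clean approach: among all cones $C_i + v_i$ with $a_i \neq 0$, pick one, say $C_1 + v_1$, whose lineality space $\lineal(C_1)$ has minimal dimension; then pick a generic point $p$ deep in the relative interior near the apex face $\lineal(C_1)+v_1$, specifically a point that lies in $C_1 + v_1$ but, by genericity of $v_1$ and of the chosen direction, lies in $C_j + v_j$ for $j \neq 1$ only when this is forced. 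Because the cones are all translates of the finitely many cones in $\Sigma_Q^\vee$, I can use the fact that distinct translates of cones from a fixed finite fan-dual family are "separated" by a localization argument: evaluate the relation on the filter of neighborhoods of the face $\lineal(C_1)+v_1$, or equivalently restrict to a transverse affine subspace where $C_1+v_1$ localizes to a pointed cone, and use that indicator functions of distinct pointed cones (with distinct apexes or distinct cone types) are linearly independent — a standard fact provable by again taking extreme rays / supporting functionals. The technical heart is handling translates: I would fix attention on a single "shape" $C \in \Sigma_Q^\vee$, collect all terms $\sum_v a_{C,v}\one_{C+v}$ of that shape, and show that after restricting to the appropriate stratum no cancellation between different shapes or different translates is possible, reducing to the classical independence of cone indicators (cf.\ \cite{McM09} or the valuative-module computations in \cite{DF10, AFR10}).

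\medskip
An alternative, possibly cleaner route for independence is to produce enough valuations separating the basis elements: by \Cref{thm: tight containment is val} the tight-containment functions $\tc{C_F+v}$ are valuations on $\Def^+(Q)$, hence extend to $\mathbb{I}(\Def^+(Q))$; and one checks $\tc{C+v}(\one_{C'+v'}) = 1$ iff $C'+v' = C+v$ among cones in $\tran(\Sigma_Q^\vee)$ (a translate of a tangent cone is tightly contained in another translate of a tangent cone of $Q$ precisely when they coincide, using that tangent cones of $Q$ are exactly the cones dual to a fixed fan, so containment forces equality of the underlying cone and tightness pins down the translate). This immediately gives that the matrix pairing basis candidates against these valuations is the identity, so the $\one_{C+v}$ are linearly independent. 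I would present the proof this way: spanning via Brianchon--Gram, independence via the tight-containment valuations — the latter is the slicker argument and leverages the machinery already built in \S\ref{subsec:deftight}.
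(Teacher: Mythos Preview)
Your spanning argument has a genuine gap. You assert that for $P\in\Def^+(Q)$ each tangent cone $C_F$ of $P$ is already an element of $\Sigma_Q^\vee$, reasoning that ``the tangent cone only depends on the normal cone, viewed as a cone, not on how finely it is subdivided.'' But the definition of an extended deformation says that each cone of $\Sigma_P$ is a \emph{union} of cones of $\Sigma_Q$; such a union need not itself be a cone of $\Sigma_Q$, and its dual is then \emph{not} in $\Sigma_Q^\vee$. For a concrete failure, let $Q$ be a regular hexagon and $P$ a triangle obtained by degenerating three alternate edges: the tangent cone of $P$ at each vertex is the intersection of two adjacent vertex tangent cones of $Q$, which is strictly smaller than either and is not a tangent cone of $Q$. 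So a direct Brianchon--Gram expansion of $\one_P$ does not land in the claimed span. The paper's proof of spanning (\Cref{prop:tangent cone spanning}) repairs exactly this: one applies Brianchon--Gram not to $P$ but to the Minkowski sum $P+\epsilon Q$, whose normal fan is a genuine \emph{subfan} of $\Sigma_Q$ (so all its tangent cones lie in $\Sigma_Q^\vee$), and then lets $\epsilon\to 0$.

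Your independence argument via tight-containment valuations is the right idea and matches the paper's approach, but your stated pairing is wrong: it is not true that $\tc{C+v}(C'+v')=1$ forces $C+v=C'+v'$ among translates of tangent cones of $Q$. Tangent cones of $Q$ form a poset under inclusion (dual to the face poset), and whenever $C'\subseteq C$ with $v'-v\in\lineal(C)$ one has tight containment without equality. The pairing matrix is upper-triangular with $1$'s on the diagonal under any linear extension of containment, not the identity; the paper's proof picks, at each step, a cone $C_1+v_1$ in the relation that contains no other $C_j+v_j$ and reads off $a_1=0$. This is a small fix, but as written your ``containment forces equality'' claim is false.
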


We begin by showing that the proposed collection spans $\ind(\Def^+(Q))$. Our proof closely mirrors one given in \cite[Theorem 4.2]{DF10}.
We prepare by recalling the Brianchon-Gram decomposition theorem.
When a polyhedron $Q$ has lineality space $L$, we say that a face $F$ of $Q$ is \textbf{relatively bounded} if $F/L$ is a bounded face of $Q/L$.

\begin{theorem}\cite{Bri37, Gra74}\label{thm:Brianchon Gram}
Let $Q$ be a polyhedron with lineality dimension $\ell$.
Then
        $$
        \one_Q = \sum_{F} (-1)^{\dim F-\ell}\one_{C_F + F}
        $$
where this sum is taken over all relatively bounded faces $F$ of $Q$.
\end{theorem}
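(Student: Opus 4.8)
The plan is to reduce the statement to the classical Brianchon--Gram theorem for polytopes \cite{Bri37, Gra74} and to dispatch the remaining unbounded case by a pointwise count of the alternating sum of indicator functions. Concretely there are three steps: (1) quotient by the lineality space to reduce to a pointed polyhedron; (2) for pointed \emph{polytopes}, invoke classical Brianchon--Gram; (3) for pointed \emph{unbounded} polyhedra, verify the identity pointwise using Euler-characteristic arguments.

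\emph{Step 1.} Let $L$ be the lineality space of $Q$, $\dim L = \ell$, and let $\pi\colon V \to V/L$ be the quotient, $\bar Q = \pi(Q)$, which is pointed. Since $Q = Q + L$, we have $\one_Q = \one_{\bar Q}\circ\pi$; since every face $F$ of $Q$ contains $L$ and $C_F \supseteq L$, the affine tangent cone $C_F + F$ is a union of $L$-cosets with $\pi(C_F+F) = C_{\pi(F)} + \pi(F)$, so $\one_{C_F+F} = \one_{C_{\pi(F)}+\pi(F)}\circ\pi$; and $F$ is relatively bounded iff $\pi(F)$ is a bounded face of $\bar Q$, with $\dim F - \ell = \dim\pi(F)$. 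Hence composing the asserted identity for $\bar Q$ with $\pi$ yields it for $Q$, so I may assume $Q$ is pointed (so $\ell=0$ and ``relatively bounded'' $=$ ``bounded''), and also full-dimensional by restricting to its affine hull.

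\emph{Steps 2--3 (pointwise count).} Write $Q = \{z \mid \langle a_i,z\rangle \le b_i,\ i\in[m]\}$ irredundantly. The affine tangent cone at a face $F$ is cut out by the constraints active on $F$, so a point $x$ lies in $C_F + F$ exactly when $x$ violates no constraint active on $F$. I would fix $x$ and evaluate $S(x) := \sum_{F}(-1)^{\dim F}\one_{C_F+F}(x)$, the sum over bounded faces. If $x\in Q$, then $x$ lies in every affine tangent cone (a polyhedron lies in all of its affine tangent cones), so $S(x) = \sum_{F\text{ bdd}}(-1)^{\dim F} = \chi(\mathcal B(Q)) = 1$, where $\mathcal B(Q)$ is the bounded subcomplex of $Q$. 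If $x\notin Q$, let $J\neq\emptyset$ be the indices of constraints violated by $x$ and $G_i$ the corresponding facets; then $x\in C_F+F$ iff $F \not\subseteq G_i$ for every $i\in J$, so $S(x) = 1 - T$ where $T$ is the alternating sum of $(-1)^{\dim F}$ over bounded faces $F$ contained in $\bigcup_{i\in J}G_i$. Since a face is not the union of its proper faces, such an $F$ lies in a single $G_i$; running inclusion--exclusion over $S\subseteq J$ and using $\chi(\mathcal B(\bigcap_{i\in S}G_i))=1$ for each nonempty intersection, $T$ collapses to the Euler characteristic of the nerve of $\{G_i\}_{i\in J}$. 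By the nerve lemma this nerve is homotopy equivalent to $\bigcup_{i\in J}G_i$, the part of $\partial Q$ visible from $x$, which radial projection from $x$ onto a hyperplane strictly separating $x$ from $Q$ identifies with a convex, hence contractible, shadow; so $T=1$ and $S(x)=0=\one_Q(x)$.

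The main obstacle is the two contractibility inputs: that $\mathcal B(Q)$ is contractible, and that the union of facets of $Q$ visible from an exterior point is contractible. For polytopes these are trivial ($\mathcal B(Q)=Q$ is a ball, and the visible boundary a disk), which is why Step 2 is immediate; the real content is establishing them for unbounded pointed $Q$. I would get contractibility of $\mathcal B(Q)$ from a deformation retraction of $Q$ onto $\mathcal B(Q)$ along $-\phi$ for a linear functional $\phi$ positive on the recession cone of $Q$ off the origin (which exists precisely because $Q$ is pointed, and makes all sublevel sets $Q\cap\{\langle\phi,\cdot\rangle\le c\}$ compact), and the visibility statement from the radial-projection homeomorphism onto the shadow. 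Alternatively, Step 3 can be bypassed altogether: truncate $Q$ by $\{\langle\phi,\cdot\rangle = t\}$ for $t$ large to get a polytope $Q_t$, apply classical Brianchon--Gram to $Q_t$, and let $t\to\infty$; the terms indexed by bounded faces of $Q$ stabilize to the claimed sum, and one checks that for each fixed $x$ the terms from faces of $Q_t$ meeting the cutting hyperplane contribute $0$ once $t$ is large.
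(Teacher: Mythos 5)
The paper offers no proof of this statement; it is quoted as classical with citations to Brianchon and Gram, so your argument is necessarily supplying what the paper omits. Taken as a whole your proposal does contain a correct proof, but the complete one is the truncation argument you relegate to the final sentences, not the primary pointwise count. Cutting $Q$ by $\{\langle \phi,\cdot\rangle \le t\}$ with $\phi$ strictly positive on the recession cone away from the origin, applying the polytope case to $Q_t$, and noting that each unbounded face $F$ of $Q$ contributes the cancelling pair $F\cap\{\phi\le t\}$ and $F\cap\{\phi=t\}$ (their affine tangent cones differ only by the constraint $\phi\le t$, which is satisfied at any fixed $x$ once $t>\phi(x)$, and their dimensions differ by one) yields the identity directly; as a byproduct, Euler's relation for the polytope $Q_t$ gives $\sum_{F\ \mathrm{bdd}}(-1)^{\dim F}=1$. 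Your Step 1 reduction along the lineality space is correct.

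The genuine gap is in the primary route, at the assertion that $Q$ deformation retracts onto its bounded subcomplex $\mathcal B(Q)$ ``along $-\phi$.'' Straight-line motion in direction $-\phi$ need not stay inside $Q$, and descent along $-\phi$ constrained to $Q$ retracts $Q$ onto the single face where $\phi$ is minimized, which is in general a proper subcomplex of $\mathcal B(Q)$: for $Q=\operatorname{Conv}\{(0,0),(1,1)\}+\operatorname{Cone}\{(0,1),(2,1)\}$ and $\phi=(0,1)$, the bounded complex is the whole edge from $(0,0)$ to $(1,1)$ but every descent path terminates at $(0,0)$. Contractibility of the bounded complex of a pointed polyhedron is true but is itself a nontrivial theorem, and your argument invokes it (for $Q$ and for every nonempty face $\bigcap_{i\in S}G_i$) as if it were immediate. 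Fortunately you only need the Euler characteristics to equal $1$, which follows from the truncation computation above, so the pointwise count can be repaired; the remaining ingredients of that count --- the characterization of $x\in C_F+F$ via active constraints, the reduction of $T$ to the Euler characteristic of the nerve of the violated facets, and the identification of their union with the convex shadow under perspective projection from $x$ --- are sound.
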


\begin{proposition}\label{prop:tangent cone spanning}
Let $Q\subset V$ be a polyhedron.
The $\ZZ$-module $\ind(\Def^+(Q))$ is spanned by $\{\one_{C+v} \mid C+v \in \tran(\Sigma_Q^\vee)\}$, the indicator functions of translations of tangent cones of $Q$.
\end{proposition}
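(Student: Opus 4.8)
The plan is to use the Brianchon--Gram decomposition (\Cref{thm:Brianchon Gram}) to write each $\one_P$ for $P\in\Def^+(Q)$ as a $\ZZ$-linear combination of indicator functions of affine tangent cones of $P$, and then to argue that each of those affine tangent cones is a translate of a tangent cone of $Q$. The first part is immediate: given $P\in\Def^+(Q)$ with lineality dimension $\ell$, \Cref{thm:Brianchon Gram} gives
\[
\one_P = \sum_{F} (-1)^{\dim F - \ell}\one_{C_F + F},
\]
the sum over relatively bounded faces $F$ of $P$. So it suffices to show that for every face $F$ of $P$, the affine tangent cone $C_F + F$ is of the form $C + v$ with $C\in\Sigma_Q^\vee$, i.e.\ that the tangent cone $C_F$ of $P$ at $F$ is a tangent cone of $Q$. (Note $C_F + F = C_F + v$ for any $v\in F$, since $F\subseteq v + \lineal(C_F)$.)

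For this I would translate through normal fans. Let $F$ be a face of $P$ with normal cone $\sigma_F$ in $\Sigma_P$. Since $P$ is an extended deformation of $Q$, the cone $\sigma_F$ is a union of cones of $\Sigma_Q$; I claim it is in fact a single cone of $\Sigma_Q$. Indeed, if $y, y'$ lie in the relative interior of $\sigma_F$, they are maximized on $P$ along the same face $F$, so $y - y'$ is constant on $F$, meaning $y-y'$ annihilates $\lineal(C_F) = \Span(F - F)$; conversely the relative interior of $\sigma_F$ is open in the orthogonal complement of this lineality space. This shows $\sigma_F$ spans a subspace, hence (being a union of cones of $\Sigma_Q$ whose relative interior is connected and open in that subspace) must be a face-fan-saturated cone; more carefully, one uses that the cones of $\Sigma_Q$ refining $\sigma_F$ all have the same lineal span, and the one containing the relative interior of $\sigma_F$ must equal $\sigma_F$ since a cone cannot properly contain another cone of the same dimension in a fan. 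Thus $\sigma_F = \sigma_{F'}$ for some face $F'$ of $Q$, and dualizing, $C_F = \sigma_F^\vee = \sigma_{F'}^\vee = C_{F'} \in \Sigma_Q^\vee$. Therefore $C_F + F \in \tran(\Sigma_Q^\vee)$, and the Brianchon--Gram expansion exhibits $\one_P$ as a $\ZZ$-linear combination of the proposed spanning set. Since the $\one_P$ generate $\ind(\Def^+(Q))$ by definition, the claim follows.

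The main obstacle is the normal-fan bookkeeping in the middle step: one must be careful that ``each cone of $\Sigma_P$ is a union of cones of $\Sigma_Q$'' forces each such cone to actually \emph{be} a single cone of $\Sigma_Q$, rather than a union of several. The key point is that a cone $\sigma_F$ of a polyhedral normal fan is always relatively open in its linear span after removing proper faces — equivalently, $\sigma_F$ is the closure of a relatively open connected piece — so it cannot decompose nontrivially within the fan $\Sigma_Q$ without $\Sigma_Q$ having a wall cutting through $\sigma_F$, which is ruled out by $\sigma_F$ being a cone of the fan $\Sigma_P$ that coarsens $\Sigma_Q$ on $\mathrm{supp}(\Sigma_P)$. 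I would state this as a short lemma (or cite the relevant fan fact) and keep the rest as above; linear independence is not needed here since the proposition only asserts spanning.
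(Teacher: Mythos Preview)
Your central claim---that every cone of $\Sigma_P$ is a single cone of $\Sigma_Q$---is false, and the argument cannot be repaired along these lines. The definition of extended deformation says precisely that each cone of $\Sigma_P$ is a \emph{union} of cones of $\Sigma_Q$, i.e.\ $\Sigma_P$ coarsens $\Sigma_Q$ on its support; it does not say $\Sigma_P$ is a subfan of $\Sigma_Q$. Concretely, take $Q$ to be any full-dimensional polytope and $P=\{v\}$ a single point. Then $\Sigma_P=\{V\}$, which is a union of all cones of $\Sigma_Q$ but certainly not a single one, and the tangent cone $C_{\{v\}}=\{0\}$ is not in $\Sigma_Q^\vee$ (the tangent cone of $Q$ at the improper face is $V$, not $\{0\}$). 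More generally, whenever $\Sigma_P$ is a strict coarsening---e.g.\ a triangle deforming a hexagon---the tangent cones of $P$ are proper intersections of tangent cones of $Q$, not elements of $\Sigma_Q^\vee$. Your ``normal-fan bookkeeping'' paragraph appears to conflate the direction of refinement: a convex cone can of course be a nontrivial union of cones of a fan (the whole space is a union of the chambers), so the step ``the one containing the relative interior of $\sigma_F$ must equal $\sigma_F$'' has no single such cone to refer to.

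The paper's proof sidesteps this by applying Brianchon--Gram not to $P$ but to the Minkowski sum $P+\epsilon Q$. Since $\Sigma_{P+\epsilon Q}$ is the common refinement of $\Sigma_P$ and $\Sigma_Q$, it is a genuine subfan of $\Sigma_Q$, so every affine tangent cone of $P+\epsilon Q$ really is a translate of some $C_F\in\Sigma_Q^\vee$. One then lets $\epsilon\to 0$: the left side $\one_{P+\epsilon Q}$ converges pointwise to $\one_P$, and on the right the translation points $v_{F,\epsilon}$ converge to vertices of $P$ while the cones $C_F$ are fixed, yielding the desired expression. This perturbation-and-limit step is exactly what is missing from your approach.
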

 
    \begin{proof}
    Let $P\in \Def^+(Q)$; i.e., $P$ is a polyhedron whose normal fan coarsens a subfan of $\Sigma_Q$.
    Given $\epsilon>0$, consider the Minkowski sum $P+\epsilon Q\subseteq\RR^n$.
    Since the normal fan of the Minkowski sum of two polyhedra is the common refinement of the two normal fans, we have that $\Sigma_{P+\epsilon Q}$ is a subfan of $\Sigma_Q$.
    Thus, the affine tangent cones of $P+\epsilon Q$ all have the form $C_F+v_{F,\epsilon}$ for some $C_F\in\Sigma_Q^\vee$ and vertex $v_{F,\epsilon}$ of $P+\epsilon Q$ where $F$ is a face of $Q$.
    By the Brianchon-Gram Theorem \ref{thm:Brianchon Gram}, we have that
        \begin{equation}\label{eq:Brianchon Gram proof}
            \one_{P+\epsilon Q} = \sum_{\substack{\text{Relatively}\\\text{bounded}\\\text{faces }{\hat F}\\\text{of } P+\epsilon Q}} (-1)^{\dim {\hat F}}\one_{C_F+v_{F,\epsilon}}
        \end{equation}
    where $C_F+v_{F,\epsilon}$ is the affine tangent cone of $P+\epsilon Q$ corresponding to the face ${\hat F}$.
    As $\epsilon$ goes to $0$, the left side of \Cref{eq:Brianchon Gram proof} converges pointwise to $\one_P$.
    Since vertices of $P+\epsilon Q$ converge to vertices of $P$ as $\epsilon\to0$, each affine cone $C_F+v_{F,\epsilon}$ on the right side of \Cref{eq:Brianchon Gram proof} converges to a tangent cone of $Q$ translated by a vertex of $P$.
    \end{proof}

We now complete the proof of \Cref{thm:tangent cone basis} by using tight containments to establish the linear independence of the proposed basis for $\mathbb I(\Def^+(Q))$.

\begin{proof}[Proof of \Cref{thm:tangent cone basis}]
\Cref{prop:tangent cone spanning} states that $\{\one_{C+v} \mid C+v \in \tran(\Sigma_Q^\vee)\}$ spans $\ind(\Def^+(Q))$, so it remains to show that these indicator functions are linearly independent.
Let $\{C_i+v_i\mid 1\le i\le k\}$ be a finite collection of translates of cones in $\Sigma_Q^\vee$, and suppose that there exist $a_i\in \ZZ$ such that  
    \begin{equation}\label{eq:linear dep}
        \sum_{i=1}^k a_i\cdot\one_{C_i+v_i} = 0.
    \end{equation}
There exists some $i$ so that $C_i+v_i$ contains no other $C_j+v_j$ for $j\neq i$.  Suppose without loss of generality that $i=1$.
We then have
    \begin{align*}
        \tc{C_1+v_1}({C_i+v_i})
        &:= \begin{cases}
        1, &C_1+v_1 \text{ tightly contains } C_i+v_i\\
        0, &\text{otherwise}
        \end{cases}\\
        &=\begin{cases}
        1, &i=1\\
        0, &i\ne 1.
        \end{cases}
    \end{align*}
Since $\tc{C_1+v_1}:\Def^+(P)\to\ZZ$ is a valuation by \Cref{thm: tight containment is val}, we can apply $\tc{C_1+v_1}$ to both sides of the equation \eqref{eq:linear dep} to obtain
    \begin{align*}
        0 = \tc{C_1+v_1}\Big(\sum_{i=1}^k a_i\cdot\one_{C_i+v_i}\Big)
        = \sum_{i=1}^k a_i\cdot\tc{C_1+v_1}(\one_{C_i+v_i})
        = a_1.
    \end{align*}
By repeating this process, we conclude that $a_i=0$ for all $i=1, \ldots, k$.
Thus the functions $\{\one_{C+v} \mid C+v \in \tran(\Sigma_Q^\vee)\}$ are linearly independent.
\end{proof}

\subsection{The universal valuative function of extended deformations}\label{subsec: universal val of extended defs}
We now prove \Cref{mainthm:Finvar} by establishing a more general statement for extended deformations of an arbitrary polyhedron.
For a set $S$, denote by $\{\be_i \mid i \in S\}$ the standard basis of $\ZZ^{\oplus S}$.

\begin{theorem}\label{thm:tangent cone basis j}
Let $Q \subseteq V$ be a polyhedron.  The map
\[
{\mathcal F}: \mathbb I(\Def^+(Q)) \to \ZZ^{\oplus \tran(\Sigma_Q^\vee)} \quad \textnormal{determined by}\quad \one_P \mapsto \sum_{\substack{C+v \in \tran(\Sigma_Q^\vee)\\ \textnormal{ tightly containing }P}}\be_{C+v}
\]
is a well-defined $\ZZ$-linear isomorphism.  In other words, by abusing the notation as before to write $\mathcal F$ also for the map $\Def^+(Q) \to \ZZ^{\oplus \tran(\Sigma_Q^\vee)}$ defined by $P \mapsto \mathcal F(\one_P)$, we have that for any valuative function $f: \Def^+(Q) \to A$ to an abelian group $A$, there exists a unique linear map $g: \ZZ^{\oplus \tran(\Sigma_Q^\vee)} \to A$ such that $f = g\circ \mathcal F$.
\end{theorem}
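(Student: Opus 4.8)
The plan is to combine the two ingredients already assembled: \Cref{thm:tangent cone basis}, which says that $\{\one_{C+v} \mid C+v \in \tran(\Sigma_Q^\vee)\}$ is a $\ZZ$-basis of $\mathbb I(\Def^+(Q))$, and \Cref{thm: tight containment is val}, which says each tight containment indicator $\tc{C+v}$ is a valuation, i.e.\ extends to a $\ZZ$-linear functional on $\mathbb I(\Def^+(Q))$. Concretely, I would first argue that $\mathcal F$ is well-defined: for a fixed polyhedron $P$, only finitely many $C+v \in \tran(\Sigma_Q^\vee)$ tightly contain $P$ (each tangent cone $C_F$ of $Q$ has a unique translate tightly containing $P$, if one exists at all, namely the one whose lineality-translate meets the face of $P$ in the corresponding normal direction — and there are finitely many faces of $Q$), so the sum is finite and $\mathcal F(\one_P) \in \ZZ^{\oplus \tran(\Sigma_Q^\vee)}$ makes sense. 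Then $\mathcal F = \sum_{C+v} \be_{C+v}\cdot \tc{C+v}$ coordinate-wise, and since each $\tc{C+v}$ is valuative by \Cref{thm: tight containment is val}, $\mathcal F$ descends to a well-defined $\ZZ$-linear map on $\mathbb I(\Def^+(Q))$.

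Next I would show $\mathcal F$ is an isomorphism by exhibiting its matrix in the basis $\{\one_{C+v}\}$ of the source and $\{\be_{C+v}\}$ of the target as unitriangular. Order (a cofinite-support-relevant portion of) $\tran(\Sigma_Q^\vee)$ by refining containment of cones-plus-translates into a total order in which $C'+v'$ precedes $C+v$ whenever $C'+v' \subsetneq C+v$; this is exactly the order implicitly used in the linear independence argument in the proof of \Cref{thm:tangent cone basis}. With respect to this order, the $(C+v, C'+v')$ entry of the matrix of $\mathcal F$ is $\tc{C+v}(\one_{C'+v'})$, which equals $1$ when $C+v = C'+v'$, equals $0$ when $C+v$ does not contain $C'+v'$ (in particular whenever $C'+v'$ comes strictly after $C+v$), so the matrix is lower-triangular with $1$'s on the diagonal when rows and columns are indexed in this order. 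Hence $\mathcal F$ is injective, and since the target basis $\{\be_{C+v}\}$ is the image of the source basis up to this unitriangular change of basis, $\mathcal F$ is also surjective, so it is a $\ZZ$-linear isomorphism.

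Finally, the universal property is a formal consequence of $\mathcal F$ being an isomorphism onto a free $\ZZ$-module with distinguished basis indexed by $\tran(\Sigma_Q^\vee)$: given any valuative $f\colon \Def^+(Q)\to A$, it extends uniquely to a $\ZZ$-linear $\widetilde f\colon \mathbb I(\Def^+(Q))\to A$ by the definition of valuativeness, and then $g := \widetilde f \circ \mathcal F^{-1}\colon \ZZ^{\oplus\tran(\Sigma_Q^\vee)}\to A$ is the unique linear map with $g\circ\mathcal F = \widetilde f$, hence with $g\circ\mathcal F(\one_P) = f(P)$ for all $P$; uniqueness of $g$ follows from surjectivity of $\mathcal F$. \Cref{mainthm:Finvar} is then the special case $Q = \Pi_\Phi$, since $\Sigma_{\Pi_\Phi} = \Sigma_\Phi$ and hence $\mathsf{GP}^+_\Phi = \Def^+(\Pi_\Phi)$ and $\tran(\Sigma_{\Pi_\Phi}^\vee) = \tran(\Sigma_\Phi^\vee)$.

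I expect the only genuine subtlety — everything else being bookkeeping — to be confirming that the containment preorder on $\tran(\Sigma_Q^\vee)$ can be refined to a total order making the matrix unitriangular, i.e.\ that $\tc{C+v}(\one_{C'+v'}) = 1$ forces $C'+v' \subseteq C+v$ and that no infinite descending chains obstruct the argument; but this is precisely the content of the minimality selection already carried out in the proof of \Cref{thm:tangent cone basis} ("there exists some $i$ so that $C_i + v_i$ contains no other $C_j + v_j$"), so it transfers directly. The well-definedness of $\mathcal F$ on indicator functions — finiteness of the tightly-containing set — is the other place to be careful, and it reduces to the observation that tight containment in a translate of a fixed tangent cone $C_F$ pins down the translate uniquely via $\lineal(C_F)+v$ meeting $P$, together with finiteness of the face poset of $Q$.
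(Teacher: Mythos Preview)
Your approach is essentially the paper's: well-definedness via valuativeness of each $\tc{C+v}$ (\Cref{thm: tight containment is val}), then the basis from \Cref{thm:tangent cone basis}, then a unitriangularity argument for the composite $\mathcal F\circ\varphi$ where $\varphi(\be_{C+v})=\one_{C+v}$. Your direct argument for finiteness of the tightly-containing set for a general $P$ (at most one translate per tangent cone, finitely many tangent cones) is in fact a bit cleaner than the paper's reduction to the case $P=C'+v'$ via \Cref{lem:tight containment for cones}.

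There is one imprecision worth fixing. You correctly flag the infinite-dimensional subtlety in passing from ``unitriangular'' to ``isomorphism,'' but the resolution you point to---the minimality selection in the proof of \Cref{thm:tangent cone basis}---only yields injectivity. Surjectivity requires instead that for each $C'+v'$ the up-set $\{C+v:\tc{C+v}(C'+v')=1\}$ is \emph{finite}, so that one can M\"obius-invert (or back-substitute) within a finite system. You have in fact already proved this finiteness in your well-definedness step; you just need to invoke it here rather than the descending-chain/minimality argument. The paper handles this by noting (via \Cref{lem:tight containment for cones}) that this up-set is exactly the set of affine tangent cones of $C'+v'$, which is finite and closed under taking further affine tangent cones; restricting to such a finite upward-closed set $S'$ makes the matrix genuinely finite and unitriangular, hence invertible. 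With that correction your proof goes through and matches the paper's.
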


We prepare the proof of \Cref{thm:tangent cone basis j} with an observation about tight containments of tangent cones.

\begin{lemma}\label{lem:tight containment for cones}
Let $Q\subseteq V$ be a polyhedron, and $C'+v' \in \tran(\Sigma_Q^\vee)$ a translate of a tangent cone of $Q$.  Another translate of a tangent cone $C+v \in \tran(\Sigma_Q^\vee)$ of $Q$ tightly contains $C'+v'$ if and only if $C+v$ is an affine tangent cone of $C'+v'$.
\end{lemma}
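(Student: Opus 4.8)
The plan is to unwind both conditions into a statement about the normal fan $\Sigma_Q$. Write $C = C_F$ for the tangent cone of $Q$ at a face $F$ and $C' = C_{F'}$ for the tangent cone at a face $F'$. Recall that $C_F = \sigma_F^\vee$ and $C_{F'} = \sigma_{F'}^\vee$, where $\sigma_F, \sigma_{F'}$ are the cones of $\Sigma_Q$ indexed by $F, F'$; also $\lineal(C_F)$ is the orthogonal complement of the linear span of $\sigma_F$, equivalently $\{x \in C_F \mid \langle y, x \rangle = 0\}$ for any $y \in \relint(\sigma_F)$. The key combinatorial fact I will use is that for faces of $Q$, containment of tangent cones $C_{F'} \subseteq C_F$ is equivalent to the reverse containment of normal cones $\sigma_F \subseteq \sigma_{F'}$, which in turn happens if and only if $\sigma_F$ is a face of $\sigma_{F'}$ (since $\Sigma_Q$ is a fan, any containment of its cones is a face relation), i.e. if and only if $F'$ is a face of $F$. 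Moreover, the affine tangent cones of the polyhedron $C' + v'$ are exactly the cones $C_{F''} + w$ where $F''$ is a face of $Q$ containing $F'$ and $w$ is a point in the corresponding face of $C' + v'$.

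First I would prove the ``if'' direction. Suppose $C + v$ is an affine tangent cone of $C' + v'$, say $C + v = C_{F''} + w$ where $F''$ is a face of $Q$ with $F' \subseteq F''$ and $w$ lies in the face of $C'+v'$ with tangent cone $C_{F''}$. Since a polyhedron is contained in each of its affine tangent cones, $C'+v' \subseteq C+v$. For tightness I must exhibit a point of $C'+v'$ in $\lineal(C) + v$; the face of $C'+v'$ whose affine tangent cone is $C+v$ is precisely $(C'+v') \cap (\lineal(C)+v)$, and this face is nonempty (faces of polyhedra corresponding to cones in the normal fan are nonempty), so we are done.

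Next the ``only if'' direction. Suppose $C + v$ tightly contains $C' + v'$. From $C' + v' \subseteq C + v$ I get, after translating to a common apex, $C' \subseteq C$ (comparing recession cones: the recession cone of $C'+v'$ is $C'$ and of $C+v$ is $C$, and recession cones are monotone under containment). By the combinatorial fact above, $C' = C_{F'} \subseteq C_F = C$ forces $F'$ to be a face of $F$, equivalently $\sigma_F$ is a face of $\sigma_{F'}$, so $C_F$ appears among the tangent cones of $Q$ at faces containing $F'$. It remains to pin down the translation vector: the face $G := (C'+v') \cap (\lineal(C)+v)$ is nonempty by tightness, and since $\lineal(C)+v$ is exactly the affine span that cuts out the face of $C'+v'$ with normal cone containing $\sigma_F$ — here I use $\lineal(C_F) = \{x \in C_F \mid \langle y, x\rangle = 0\}$ for $y \in \relint(\sigma_F)$, as in the proof of \Cref{lem: tangent cone tight containment is half space containment} — the face $G$ has affine tangent cone $C_F + w$ for any $w \in G$, and tightness together with $G \subseteq \lineal(C)+v$ gives $C_F + w = C + v$. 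Hence $C+v$ is an affine tangent cone of $C'+v'$.

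The main obstacle I anticipate is the bookkeeping in the ``only if'' direction: carefully checking that the containment $C'+v' \subseteq C+v$ of affine cones, plus the nonempty intersection with $\lineal(C)+v$, really does force the vector $v$ to be the correct translate and not merely some translate with $C_F + v \supseteq C'+v'$. The tightness hypothesis is exactly what rules out the ``too large'' translates, and identifying $\lineal(C)+v$ with the supporting affine subspace of a face of $C'+v'$ is the crux; once that identification is in place the rest is immediate from the standard description of affine tangent cones of a cone.
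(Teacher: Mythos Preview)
Your argument is correct and reaches the same conclusion, but by a different route than the paper. The paper first disposes of the case $v=v'$ (where tight containment of cones is just containment, and containment among tangent cones of $Q$ is exactly a face relation in $\Sigma_Q$), and then reduces the general case to it: given $x\in\lineal(C)\cap(C'+v'-v)$, the paper argues that the smallest face of $C'+v'-v$ through $x$ must lie in $\lineal(C)$, whence its affine span contains the origin and so $v'-v\in\lineal(C)$. Your argument instead handles both directions structurally: you extract $C'\subseteq C$ from recession cones, read off the face relation $\sigma_F\leq\sigma_{F'}$, and then identify $G=(C'+v')\cap(\lineal(C)+v)$ as the face of $C'+v'$ with normal cone $\sigma_F$, so its affine tangent cone is $C_F+w=C+v$ for $w\in G$. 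The paper's proof is self-contained; yours leans on \Cref{lem: tangent cone tight containment is half space containment} (which is legitimate, since $C'+v'\in\Def^+(Q)$). One point to tighten: your claim that ``$\lineal(C)+v$ is exactly the affine span that cuts out the face with normal cone containing $\sigma_F$'' is not automatic---that face a priori has affine span $\lineal(C)+v'$, and it is precisely the conclusion $P\cap H\subseteq\lineal(C_F)+v$ of \Cref{lem: tangent cone tight containment is half space containment} that forces these two translates of $\lineal(C)$ to coincide. Invoking that lemma explicitly (rather than just its proof) would make this step transparent.
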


\begin{proof}
First consider the $v = v'$ case, which is equivalent to the case of $v = v' = 0$ by translating both $C+v$ and $C'+v'$ by $-v$.  For cones, tight containment is equivalent to containment, since every cone contains the origin in its lineality space.  Moreover, for tangent cones $C$ and $C'$ of $Q$, we have $C'\subseteq C$ if and only if ${C}^\vee$ is a face of ${C'}^\vee$ (since both dual cones are elements of the fan $\Sigma_Q$), or equivalently, if and only if $C$ is a tangent cone of $C'$.

For the general case, suppose $C$ tightly contains $C'+v'-v$, so that there exists $x \in \operatorname{lineal}(C) \cap (C'+v'-v)$.  Since $x\in \operatorname{lineal}(C)$ implies that at most one of $x+y$ and $x-y$ lie in $C$ for any nonzero $y\notin \operatorname{lineal}(C)$, the smallest face of $C'+v'-v$ containing $x$ is contained in $\operatorname{lineal}(C)$.  Thus, the affine span of this face contains the origin, and hence $v'-v\in \operatorname{lineal}(C)$.  Since the translate of $C$ by the element $v'-v$ in its lineality space is equal to $C$ itself, we are thus reduced to the case of $v = v'$.
\end{proof}

\begin{proof}[Proof of \Cref{thm:tangent cone basis j}]
By definition, the map $\mathcal F$ is defined by $\mathcal F(\one_P) = \sum_{C+v\in \tran(\Sigma_Q^\vee)} \tc{C+v}(P)\be_{C+v}$.  Since tight containments define valuative functions (\Cref{thm: tight containment is val}), the map $\mathcal F$ is $\ZZ$-linear if it is well-defined.  For $\mathcal F$ to be well-defined, the summation $\sum_{C+v\in \tran(\Sigma_Q^\vee)} \overline{j_{C+v}}(P)\be_{C+v}$ need be finite for any $P\in \Def^+(Q)$.  It suffices to check this for $P = C'+v' \in \tran(\Sigma_Q^\vee)$ since any $P\in \Def^+(Q)$ is a linear combination over finitely many translates of tangent cones of $Q$.  \Cref{lem:tight containment for cones} implies that
\begin{equation}\label{eq:tight containment for cones}\tag{\textdagger}
\mathcal F(\one_{C'+v'}) = \sum_{C+v} \be_{C+v}, 
\end{equation}
where the sum is over all affine tangent cones $C+v$ of $C'+v'$, which is a finite sum.

To establish that $\mathcal F$ is an isomorphism, we first note that
\Cref{thm:tangent cone basis} gives an isomorphism
\[
\varphi: \ZZ^{\oplus \tran(\Sigma_Q^\vee)} \overset\sim\to \mathbb I(\Def^+(Q)) \quad \textnormal{defined by}\quad \be_{C+v} \mapsto \one_{C+v}.
\]
Now consider the composition
\[
\mathcal F \circ \varphi: \ZZ^{\oplus \tran(\Sigma_Q^\vee)} \to \ZZ^{\oplus\tran(\Sigma_Q^\vee)}\quad \textnormal{defined by}\quad \be_{C'+v'} \mapsto \sum_{C+v\in \tran(\Sigma_Q^\vee)}\overline{j_{C+v}}(C'+v')\be_{C+v},
\]
We claim that for any finite subset $S\subset \tran(\Sigma_Q^\vee)$, there exists a finite subset $S' \subset \tran(\Sigma_Q^\vee)$ containing $S$ such that the restriction of $\mathcal F \circ \varphi$ has image $\ZZ^{S'}$ and is an isomorphism $\ZZ^{S'} \overset{\sim}\to \ZZ^{S'}$.  Given any finite $S\subset \tran(\Sigma_Q^\vee)$, let $S'$ be the set of affine tangent cones of the elements in $S$.  The equation \eqref{eq:tight containment for cones} established by \Cref{lem:tight containment for cones} implies that a linear order on $S'$ that refines the containment relation makes the matrix of the map $\mathcal F \circ \varphi: {\ZZ^{S'}} \to \ZZ^{S'}$ triangular with 1's on the diagonal.
\end{proof}

Recall from \Cref{def:GP} that an extended generalized $\Phi$-permutohedron is an extended deformation of a $\Phi$-permutohedron $\Pi_\Phi$, and
\[\mathsf{GP}_\Phi^+ := \Def^+(\Pi_\Phi), \quad \text{the set of all extended generalized $\Phi$-permutohedra.}\]

\begin{proof}[Proof of \Cref{mainthm:Finvar}]
Set $Q = \Pi_\Phi$ in \Cref{thm:tangent cone basis j}.
\end{proof}

\bigskip
The remainder of this section will only be used in the proof of \Cref{prop: existence}.  Nonetheless, because the results may be of independent interest in polyhedral geometry, we develop them here in the setting of deformations of arbitrary polytopes.
Let $\mathscr V$ be the set of vertices of a deformation of a polytope $Q\subset V$.  Let
\[
\Def(Q)|_{\mathscr V} := \{Q' \in \Def(Q) \mid Q' \subseteq Q,\ \operatorname{Vert}(Q') \subseteq \mathscr V\}
\]
be the set of deformations of $Q$ that have vertices in $\mathscr V$.  Let $\min(\Sigma_Q^\vee)$ be the set of minimal tangent cones of $Q$, that is, the tangent cones of the vertices of $Q$.  Denote by $X_{\mathscr V} := \{C+v \mid C \in \min(\Sigma_Q^\vee),\ v \in \mathscr V\} \subset \tran(\Sigma_Q^\vee)$, and define a function
\[
\mathcal F_{\mathscr V}: \mathbb I(\Def(Q)|_{\mathscr V}) \to \ZZ^{X_{\mathscr V}}
\quad\textnormal{by}\quad \one_{P} \mapsto \sum_{\substack{C+v \in X_{\mathscr V}\\ \textnormal{tightly containing }P}} \be_{C+v}.
\]

\begin{proposition}\label{prop:restrictedFinvar}
Let $Q$, $\mathscr V$, and $\mathcal F_{\mathscr V}$ be as above.  There is a $\ZZ$-linear injection $\mathcal E_{\mathscr V}: \ZZ^{X_{\mathscr V}} \to \ZZ^{\oplus \tran(\Sigma_Q^\vee)}$ such that the following diagram commutes:
\[
\begin{tikzcd}
\mathbb I(\Def(Q)|_{\mathscr V}) \arrow[r, "\mathcal F_{\mathscr V}"] \arrow[d, hook] & \ZZ^{X_{\mathscr V}} \arrow[d, "\mathcal E_{\mathscr V}"]\\
\mathbb I(\Def^+(Q)) \arrow[r, "\mathcal F"] & \ZZ^{\oplus\tran(\Sigma_Q^\vee)}.
\end{tikzcd}
\]
\end{proposition}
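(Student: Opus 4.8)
The plan is to compute $\mathcal F$ and $\mathcal F_{\mathscr V}$ explicitly on the generating indicator functions $\one_P$ with $P\in\Def(Q)|_{\mathscr V}$, and to read a formula for $\mathcal E_{\mathscr V}$ off the comparison. For each face $F$ of $Q$ fix a linear functional $y_F\in\relint(\sigma_F)$ (in particular $y_{\{u\}}$ is defined for each vertex $u$ of $Q$), and for a polytope $P$ write $P^y$ for the face of $P$ on which a functional $y$ is maximized. Recall from the proof of \Cref{lem: tangent cone tight containment is half space containment} that $\lineal(C_F)=\Span\{z-z'\mid z,z'\in\ver(F)\}$; in particular $\lineal(C_u)=\{0\}$ for a vertex $u$, and $\lineal(C_F)\subseteq y_F^\perp$ since $\pm z\in C_F=\sigma_F^\vee$ for $z\in\lineal(C_F)$. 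Every member of $\Def(Q)|_{\mathscr V}$ is a polytope, being a deformation of the polytope $Q$.

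\emph{Explicit formulas.} Fix $P\in\Def(Q)|_{\mathscr V}$. For a vertex $u$ of $Q$ put $f_u(P):=P^{y_{\{u\}}}$; this is a single vertex of $P$, independent of the choice of $y_{\{u\}}$, since $\sigma_u$ is full-dimensional and $\Sigma_P$ coarsens $\Sigma_Q$, so $y_{\{u\}}$ lies in the interior of a unique maximal cone of $\Sigma_P$. I claim $C_u+v\in X_{\mathscr V}$ tightly contains $P$ if and only if $v=f_u(P)$: for ``if'', the tangent cone of $P$ at $f_u(P)$ has normal cone containing $\sigma_u$, hence is contained in $C_u$, so $P\subseteq C_u+f_u(P)$, while $f_u(P)\in P\cap(\lineal(C_u)+f_u(P))$; for ``only if'', a pointed affine cone $C_u+v$ with $P\subseteq C_u+v$ and $v\in P$ forces $v$ to be a vertex of $P$ whose normal cone contains $\sigma_u$, so $v=f_u(P)$. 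Since $f_u(P)\in\operatorname{Vert}(P)\subseteq\mathscr V$, we obtain
\[
\mathcal F_{\mathscr V}(\one_P)=\sum_{u\in\operatorname{Vert}(Q)}\be_{C_u+f_u(P)},
\]
a sum of distinct basis vectors, as distinct vertices of $Q$ have distinct tangent cones (the recession cones of the summands). Similarly, for a face $F$ of $Q$, \Cref{lem: tangent cone tight containment is half space containment} together with $\lineal(C_F)\subseteq y_F^\perp$ shows that $C_F+w$ tightly contains $P$ iff $P^{y_F}\subseteq\lineal(C_F)+w$. Since the tangent cone of $P$ at $P^{y_F}$ is contained in $C_F$, the affine hull of $P^{y_F}$ is a coset of a subspace of $\lineal(C_F)$, so this condition holds iff $w$ lies in the single coset $\operatorname{aff}(P^{y_F})+\lineal(C_F)$, i.e.\ iff $C_F+w=C_F+f_F(P)$ for any chosen point $f_F(P)\in P^{y_F}$. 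Thus exactly one translate of each $C_F$ tightly contains $P$, and since distinct faces of $Q$ have distinct tangent cones,
\[
\mathcal F(\one_P)=\sum_{F\text{ a face of }Q}\be_{C_F+f_F(P)},
\]
again a sum of distinct basis vectors.

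\emph{The map $\mathcal E_{\mathscr V}$.} The link between the two formulas is the identity
\[
\operatorname{Vert}(P^{y_F})=\{f_u(P)\mid u\in\operatorname{Vert}(F)\}\qquad\text{for every face }F\text{ of }Q.
\]
For $\supseteq$: if $u\in\operatorname{Vert}(F)$ then $\sigma_F$ is a face of $\sigma_u$, and $\sigma_u$ lies in the normal cone of $P$ at $f_u(P)$, hence so does $y_F$, giving $f_u(P)\in P^{y_F}$. For $\subseteq$: a vertex $v$ of $P^{y_F}$ satisfies $\sigma_F\subseteq\sigma^P_v$, and $\sigma^P_v$ is a union of maximal cones of $\Sigma_Q$, so $\sigma_F$ is a face of some $\sigma_u\subseteq\sigma^P_v$; then $u\in\operatorname{Vert}(F)$ and $f_u(P)=v$. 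In particular $C_F+f_u(P)=C_F+f_F(P)$ whenever $u\in\operatorname{Vert}(F)$. Now fix a generic linear functional $\rho$ on $V$ and, for each face $F$ of $Q$, let $\operatorname{top}(F)$ be the $\rho$-maximal vertex of $F$ (so $\operatorname{top}(\{u\})=u$), and define the $\ZZ$-linear map
\[
\mathcal E_{\mathscr V}\colon\ZZ^{X_{\mathscr V}}\longrightarrow\ZZ^{\oplus\tran(\Sigma_Q^\vee)},\qquad\be_{C_u+v}\longmapsto\sum_{\substack{F\text{ a face of }Q\\\operatorname{top}(F)=u}}\be_{C_F+v},
\]
a finite sum since $Q$ has finitely many faces.

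\emph{Commutativity and injectivity.} For $P\in\Def(Q)|_{\mathscr V}$, using the displayed formulas and reindexing by $F$,
\[
\mathcal E_{\mathscr V}\bigl(\mathcal F_{\mathscr V}(\one_P)\bigr)=\sum_{u\in\operatorname{Vert}(Q)}\ \sum_{\operatorname{top}(F)=u}\be_{C_F+f_u(P)}=\sum_{F\text{ a face of }Q}\be_{C_F+f_{\operatorname{top}(F)}(P)}=\sum_{F}\be_{C_F+f_F(P)}=\mathcal F(\one_P),
\]
where the third equality uses $C_F+f_{\operatorname{top}(F)}(P)=C_F+f_F(P)$; since the left vertical map sends $\one_P\mapsto\one_P$ and these $\one_P$ generate $\mathbb I(\Def(Q)|_{\mathscr V})$, the square commutes. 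For injectivity, let $\pi\colon\ZZ^{\oplus\tran(\Sigma_Q^\vee)}\to\ZZ^{X_{\mathscr V}}$ be the projection onto the coordinates indexed by $X_{\mathscr V}$. If $C_u+v\in X_{\mathscr V}$ and $F$ is a face of $Q$ with $C_F+v\in X_{\mathscr V}$, then $C_F\in\min(\Sigma_Q^\vee)$, so $F$ is a vertex of $Q$, and $\operatorname{top}(F)=u$ then forces $F=\{u\}$; hence $\pi(\mathcal E_{\mathscr V}(\be_{C_u+v}))=\be_{C_u+v}$, so that $\pi\circ\mathcal E_{\mathscr V}=\mathrm{id}$ and $\mathcal E_{\mathscr V}$ is injective. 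The main obstacle is establishing the two explicit formulas and, above all, the compatibility $\operatorname{Vert}(P^{y_F})=\{f_u(P)\mid u\in\operatorname{Vert}(F)\}$: it is exactly this identity that forces the choice function $\operatorname{top}$, since naively summing over \emph{all} faces $F$ having $u$ as a vertex would overcount each face $F$ by the factor $|\operatorname{Vert}(F)|$.
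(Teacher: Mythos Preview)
Your proof is correct and follows essentially the same approach as the paper: both fix, for each face $F$ of $Q$, a distinguished vertex of $F$ (you via a generic functional $\rho$ selecting $\operatorname{top}(F)$, the paper via an arbitrary choice of minimal cone $\widetilde C\subseteq C_F$) and use this choice to relate tight containment in $C_F+v$ to tight containment in a translate of the chosen minimal cone. Your link identity $\operatorname{Vert}(P^{y_F})=\{f_u(P)\mid u\in\operatorname{Vert}(F)\}$ is exactly the content of the paper's key claim, and your injectivity argument via the projection $\pi$ is the paper's observation that $\mathcal E_{\mathscr V}^*$ restricts to the identity on $\ZZ^{X_{\mathscr V}}$.
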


\begin{proof}
A translate of tangent cone of $Q$ tightly contains $P \in \Def(Q)|_{\mathscr V}$ only if it is of the form $C+v$ where $C\in \Sigma_Q^\vee$ and $v\in \mathscr V$.  Let us denote $Y_{\mathscr V} := \{C+ v \mid C\in \Sigma_Q^\vee, \ v\in \mathscr V\}$.  For each $C\in \Sigma_Q^\vee$, fix a minimal cone $\widetilde C\in \min(\Sigma_Q^\vee)$ that is contained in $C$.  We claim that $P$ is tightly contained in $C+v \in Y_{\mathscr V}$ if and only if $P$ is tightly contained in exactly one of $\{\widetilde C + \widetilde v \mid \widetilde v \in \mathscr V\textnormal{ and } \widetilde v \in \lineal(C)+v\} \subset X_{\mathscr V}$.  Assuming the claim for now, we have an equality of functions
\[
\overline{j_{C+v}} = \sum_{\widetilde v \in \mathscr V \cap (\lineal(C)+v)} \overline{j_{\widetilde C + \widetilde v}}
\]
on $\mathbb I(\Def(Q)|_{\mathscr V})$ for each $C+v\in Y_{\mathscr V}$.  Thus, since the map $\mathcal F_{\mathscr V}$ is defined by
\[
\mathcal F_{\mathscr V}(\one_P) =  \sum_{C+v\in X_{\mathscr V}} \tc{C+v}(P)\be_{C+v},
\]
the map $\mathcal E_{\mathscr V}: \ZZ^{X_{\mathscr V}} \to \ZZ^{Y_{\mathscr V}} \hookrightarrow \ZZ^{\oplus\tran(\Sigma_Q^\vee)}$ defined by
\[
\be_{C' + v'} \mapsto \sum_{\substack{C+v \in Y_{\mathscr V}\\ \text{ such that }\widetilde C = C' \text{ and} \\ v' \in \lineal(C) + v}} \be_{C+v}
\]
satisfies the commutativity of the diagram stated in the proposition.  The map $\mathcal E_{\mathscr V}$ is the dual map of the map $\mathcal E^*_{\mathscr V}: \ZZ^{Y_{\mathscr V}} \to \ZZ^{X_{\mathscr V}}$ defined by $\be_{C+v} \mapsto \sum_{\widetilde v\in \mathscr V \cap (\lineal(C)+v)} \be_{\widetilde C+ \widetilde v}$.  Since $\mathcal E^*_{\mathscr V}$ is identity when restricted to $\ZZ^{X_{\mathscr V}}$, and in particular surjective, the dual map  $\mathcal E_{\mathscr V}$ is injective.

For the claim, the ``if'' direction is immediate.  For the ``only if'' direction, suppose $C+v$ tightly contains $P$.  Let $y \in \operatorname{relint}(\widetilde C^\vee)$, and consider the face of $P$ maximizing the linear functional $\langle y, \cdot \rangle$.  Because $P$ is a deformation of $Q$, this face is necessarily a vertex $\widetilde v\in \mathscr V$ of $P$, and moreover the tangent cone of $P$ at $\widetilde v$ is contained in $\widetilde C$.  Hence, the translate $\widetilde C + \widetilde v$ tightly contains $P$, and $\widetilde v\in \lineal(C)+v$ since $C^\vee$ is a face of $\widetilde C^\vee$.
This translate of $\widetilde C$ is the unique one because any polyhedron is tightly contained in at most one translate of a cone.
\end{proof}

\begin{remark}\label{rem:restrictedFinvar}
In the proof of \Cref{prop:restrictedFinvar} above, let $\operatorname{St}(C) := \{C' \in \min(\Sigma_Q^\vee) \mid C' \subseteq C\}$ for each $C\in \Sigma_Q^\vee$.  Then, by the claim in the proof, for each $C+v\in Y_{\mathscr V}$ we have an equality of functions
\[
|\operatorname{St}(C)| \cdot \tc{C+v} = \sum_{C'\in \operatorname{St}(C)} \ \sum_{v' \in \mathscr V \cap (\lineal(C)+v)} \tc{C'+v'}.
\]
Thus, if we are working with $\QQ$-coefficients instead of $\ZZ$ in the previous proposition, we can alternatively define $\mathcal E_{\mathscr V}: \QQ^{X_{\mathscr V}} \to \QQ^{Y_{\mathscr V}}$ by
\[
\be_{C' + v'} \mapsto \sum_{\substack{C+v\in Y_{\mathscr V}\\ \text{ such that } C' \in \operatorname{St}(C) \text{ and}\\ v'\in \lineal(C) +v}} \frac{1}{|\operatorname{St}(C)|}\be_{C+v}.
\]
This eliminates making choices of the minimal cone $\widetilde C \in \operatorname{St}(C)$ for each cone $C\in \Sigma_Q^\vee$ in the proof of \Cref{prop:restrictedFinvar} when one works over $\QQ$-coefficients. 
\end{remark}

\section{Valuative invariants}\label{sec:valuativeInvariants}

We now consider universal valuative invariants.  In \S\ref{sec:val invariant of gen Coxeter permutohedra}, we compute the universal valuative invariant of extended generalized Coxeter permutohedra.  In \S\ref{sec:Coxeter matroids and the G-invariant}, we review Coxeter matroids and the $\mathcal G$-invariant, and discuss an application to delta-matroids. In \S\ref{sec:ProofGinvar}, we prove \Cref{mainthm:Ginvar}.

\subsection{Valuative invariants of generalized Coxeter permutohedra}\label{sec:val invariant of gen Coxeter permutohedra}

Let us first fix notations and recall basic facts about root systems.  See \cite{Hum90} for a general reference on reflection groups, and \cite{ACEP20} for an account tailored towards generalized Coxeter permutohedra.

\begin{notation}
As before, let $W$ be a finite reflection group with the root system $\Phi = (V,R)$.  We assume $\Phi$ to be reduced.  Let $n = \dim V$ and write $[n] = \{1, \ldots n\}$.  Let us fix once and for all a system of positive roots $R^+\subset R$, and set 
\begin{itemize}
\item $\{\alpha_1, \ldots, \alpha_n\}\subset R^+$ to be the set of simple roots of $\Phi$,
\item $\{s_1, \ldots, s_n\}\subset W$ to be the corresponding simple reflections generating $W$, and
\item $\{\varpi_1, \ldots, \varpi_n\} \subset V$ to be the corresponding set of fundamental weights.
\end{itemize}
For a subset $I\subset [n]$, we set
\begin{itemize}
\item $W_I= \langle s_i \mid i \in I\rangle$ to be the parabolic subgroup of $W$ corresponding to $I$,
\item $C_I := \operatorname{Cone}(\{\alpha_1, \ldots, \alpha_n\} \cup \{-\alpha_i \mid i \in I\})$, whose dual cone is
\item $\sigma_{[n]\setminus I} := \operatorname{Cone}(\varpi_i \mid i\in [n]\setminus I)$,
 and
\item $\varpi_{[n]\setminus I} := \sum_{i\in [n]\setminus I}\varpi_i$, whose $W$-orbit $W\cdot \varpi_{[n] \setminus I}$ is identified with $W/W_I$.
\end{itemize}
\end{notation}

The Coxeter complex $\Sigma_\Phi$ consists of the $W$-translates of the cones $\sigma_{I}$ as $I$ ranges over all subsets of $[n]$.  See \cite[\S3.2]{ACEP20} for a summary of some combinatorial properties of $\Sigma_\Phi$.  We will use the following standard fact about the action of $W$ on $\Sigma_\Phi$.

\begin{proposition}\label{prop:Worbits}
The $W$-orbits of the action of $W$ on $\Sigma_\Phi$ are in bijection with $2^{[n]}$, where a subset $I\subseteq [n]$ corresponds to the orbit $W\cdot \sigma_I$ with the stabilizer of $\sigma_I$ being $W_I$.  Similarly, the $W$-orbits of $\Sigma_\Phi^\vee$ are the orbits of $C_I$ as $I$ ranges over all subsets of $[n]$.
\end{proposition}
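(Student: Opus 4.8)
The plan is to derive the proposition from two classical facts about the action of a finite reflection group $W$ on $V$ (see \cite[\S1.12--1.15]{Hum90}): first, the closed fundamental chamber $D$ is a \emph{strict} fundamental domain, meaning every $W$-orbit in $V$ meets $D$ in exactly one point; second, the stabilizer of a point of $D$ is the standard parabolic subgroup generated by those simple reflections that fix it. Here $D = \operatorname{Cone}(\varpi_1,\dots,\varpi_n) = \{x\in V : \langle\alpha_i,x\rangle\ge 0 \text{ for all } i\}$, and since $\langle\alpha_k,\varpi_j\rangle$ vanishes for $k\ne j$ and is positive for $k=j$, the $\varpi_i$ form a basis of $V$; hence $D$ is a simplicial cone, its faces are exactly the cones $\operatorname{Cone}(\varpi_j : j\in J)$ for $J\subseteq[n]$, and these $2^n$ cones are pairwise distinct. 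Write $\sigma_I$ for the face of $D$ whose dual cone is $C_I$, that is, $\sigma_I = \operatorname{Cone}(\varpi_j : j\in[n]\setminus I)$. By the description of $\Sigma_\Phi$ recalled above, every cone of $\Sigma_\Phi$ is a $W$-translate of some $\sigma_I$, so the map $I\mapsto W\cdot\sigma_I$ is onto the set of $W$-orbits of $\Sigma_\Phi$; it remains to prove injectivity and to compute stabilizers.

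Both are obtained from strictness of the fundamental domain. Fix $I\subseteq[n]$ and a point $x$ in the relative interior of $\sigma_I$, so $x=\sum_{j\notin I}c_j\varpi_j$ with every $c_j>0$; from the values of $\langle\alpha_k,\varpi_j\rangle$ we get $\langle\alpha_k,x\rangle>0$ for $k\notin I$ and $\langle\alpha_k,x\rangle=0$ for $k\in I$. As a simple reflection $s_k$ fixes $x$ precisely when $\langle\alpha_k,x\rangle=0$, the second classical fact gives $\operatorname{Stab}_W(x)=\langle s_k : k\in I\rangle = W_I$. Now suppose $w\in W$ maps some face $\sigma_{I'}$ onto $\sigma_I$. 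Then $w$ carries $\operatorname{relint}(\sigma_{I'})\subseteq D$ onto $\operatorname{relint}(\sigma_I)\subseteq D$, so by the uniqueness part of the fundamental-domain property $w$ fixes each point of $\operatorname{relint}(\sigma_{I'})$; hence $\operatorname{relint}(\sigma_{I'})=\operatorname{relint}(\sigma_I)$, forcing $I'=I$ and giving injectivity. Taking $I'=I$ in the same argument shows that the setwise stabilizer of $\sigma_I$ coincides with the pointwise stabilizer of $\operatorname{relint}(\sigma_I)$, namely $W_I$. This proves the statement for $\Sigma_\Phi$.

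For $\Sigma_\Phi^\vee$, recall that $W$ acts on $V$ by orthogonal transformations for $\langle\cdot,\cdot\rangle$, so polar duality $\sigma\mapsto\sigma^\vee$ is a $W$-equivariant inclusion-reversing bijection from $\Sigma_\Phi$ onto $\Sigma_\Phi^\vee$. It therefore maps $W$-orbits to $W$-orbits and preserves stabilizers; since the dual cone of $\sigma_I$ is $C_I$ by definition, the $W$-orbits of $\Sigma_\Phi^\vee$ are exactly $W\cdot C_I$ for $I\subseteq[n]$, with $\operatorname{Stab}_W(C_I)=W_I$. The main subtlety lies in the two appeals to strictness of the fundamental domain --- needed both for injectivity of the orbit labelling and for the passage from pointwise to setwise stabilizers of faces --- together with the bookkeeping of the duality between faces of $\Sigma_\Phi$ and cones of $\Sigma_\Phi^\vee$; none of this presents a genuine obstacle, which is why the proposition is invoked as standard.
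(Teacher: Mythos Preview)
Your proof is correct. The paper does not supply a proof of this proposition at all --- it is stated as a ``standard fact'' and left without argument --- so there is no approach to compare; your derivation from the fundamental-domain theorem in \cite{Hum90} is exactly the standard justification one would expect, and the passage from the pointwise stabilizer of a relative-interior point to the setwise stabilizer of the face is handled correctly. One small notational point: the paper's convention has $\sigma_{[n]\setminus I}=\operatorname{Cone}(\varpi_j:j\in[n]\setminus I)$ as the dual of $C_I$, so your explicit redefinition of $\sigma_I$ as the face dual to $C_I$ (i.e.\ what the paper would write $\sigma_{[n]\setminus I}$) is helpful for keeping the stabilizer statement $\operatorname{Stab}_W(\sigma_I)=W_I$ consistent.
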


The reflection group $W$ acts on $V$, inducing an action of $W$ on $\ZZ^V$ by $(w\cdot f)(v) = f(w^{-1}v)$.  If $\mathscr P$ is a $W$-invariant family of polyhedra in $V$, the group $W$ thus acts on $\mathbb I(\mathscr P)$ by $w\cdot \one_P = \one_{w\cdot P}$.  We say that a valuative function $f: \mathscr P \to A$ is a \textbf{valuative invariant} if $f(w\cdot P) = f(P)$ for all $w\in W$ and $P \in \mathscr P$.  We will often use the following standard facts about action of a finite group on vector spaces; see for instance \cite[\S2.5.1]{AM10}.

\begin{lemma}\label{lem:averagemap}
Let a finite group $W$ act on a $\QQ$-vector space $U$.  The ``average map'' $\operatorname{avg}: U\to U$ defined by
\[
\operatorname{avg}(u) := \frac{1}{|W|} \sum_{w\in W} w\cdot u
\]
has the following properties:
\begin{enumerate}
\item It is a projection onto the space $U^W := \{u \in U \mid w \cdot u = u \text{ for all } w\in W\}$ of $W$-fixed points, which is identified with the space $U/_W := U / \operatorname{span}(u - w\cdot u\mid u\in U,\ w\in W)$ since $\ker \operatorname{avg} = \operatorname{span}(w\cdot u - u \mid u\in U,\ w\in W)$.
\item A map $f: U \to A$ to a $\QQ$-vector space $A$ satisfies $f(w\cdot u) = f(u)$ for all $w\in W$ and $u\in U$ if and only if $f$ factors as $f = f|_{U^W} \circ \operatorname{avg}$.  In other words, a $W$-invariant map from $U$ is equivalent to a map from $U^W$.
\end{enumerate}
\end{lemma}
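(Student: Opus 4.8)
The plan is to prove both parts of \Cref{lem:averagemap} by the classical averaging argument (Maschke's trick); the one subtlety worth flagging is that part (2) genuinely uses that $f$ is $\QQ$-linear rather than merely a set map.

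For part (1), I would first show $\operatorname{avg}$ is a projection onto $U^W$. Its image lies in $U^W$ because, for any $w'\in W$, reindexing the sum by $w\mapsto w'w$ gives $w'\cdot\operatorname{avg}(u) = \frac{1}{|W|}\sum_{w\in W}(w'w)\cdot u = \operatorname{avg}(u)$. Conversely, if $u\in U^W$ then each term $w\cdot u$ equals $u$, so $\operatorname{avg}(u)=u$; thus $\operatorname{avg}$ restricts to the identity on $U^W$. Together these say $\operatorname{avg}$ is an idempotent $\QQ$-linear endomorphism with image exactly $U^W$, i.e.\ a projection onto $U^W$. Next I would compute $\ker\operatorname{avg}$: the same reindexing shows $\operatorname{avg}(w_0\cdot u) = \operatorname{avg}(u)$ for all $w_0$, hence $\operatorname{avg}(w_0\cdot u - u) = 0$, giving $\operatorname{span}(w\cdot u - u\mid u\in U,\ w\in W)\subseteq\ker\operatorname{avg}$; and conversely, if $\operatorname{avg}(u)=0$ then $\sum_{w}w\cdot u = 0$, so $|W|\,u = \sum_w(u - w\cdot u)$ and $u = -\frac{1}{|W|}\sum_w(w\cdot u - u)$ lies in that span (using $|W|^{-1}\in\QQ$). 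So $\ker\operatorname{avg}$ is exactly $\operatorname{span}(w\cdot u - u)$, and since a projection decomposes $U = U^W\oplus\ker\operatorname{avg}$, the composite $U^W\hookrightarrow U\twoheadrightarrow U/\ker\operatorname{avg}$ is an isomorphism, identifying $U^W$ with $U/_W$.

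For part (2), the ``if'' direction is immediate: $f = f|_{U^W}\circ\operatorname{avg}$ together with $\operatorname{avg}(w\cdot u) = \operatorname{avg}(u)$ from part (1) gives $f(w\cdot u) = f(u)$. For the ``only if'' direction, assuming $f$ is $\QQ$-linear and $W$-invariant, I would write $f(u) = f(\operatorname{avg}(u)) + f\big(u - \operatorname{avg}(u)\big)$; by part (1) the second argument lies in $\operatorname{span}(w\cdot u' - u')$, on which the linear map $f$ vanishes since $f(w\cdot u' - u') = f(w\cdot u') - f(u') = 0$, whence $f(u) = f(\operatorname{avg}(u)) = f|_{U^W}(\operatorname{avg}(u))$. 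There is no real obstacle; the only point requiring care is this use of linearity in the ``only if'' direction — for an arbitrary set map constant on $W$-orbits the factorization fails (e.g.\ $U=\QQ$, $W=\ZZ/2$ acting by negation, $f(x)=x^2$) — so I would make the $\QQ$-linearity hypothesis explicit, or alternatively just cite \cite[\S2.5.1]{AM10}.
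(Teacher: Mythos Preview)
Your proof is correct and is the standard averaging (Maschke) argument, including the observation that the ``only if'' direction of part (2) uses $\QQ$-linearity of $f$. The paper does not prove this lemma at all: it states it as a standard fact with the reference \cite[\S2.5.1]{AM10} --- precisely the alternative you mention at the end --- so there is nothing to compare.
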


We now compute the universal valuative invariant of $\mathsf{GP}_\Phi^+$ over $\QQ$-coefficients.  First, note that by   \Cref{prop:Worbits}, the orbits of the action of $W$ on $\tran(\Sigma_\Phi^\vee)$ are in bijection with $\{C_I + v \mid I \subseteq [n],\ v\in V\}$.  Since $C_I + v = C_{I'} + v'$ if and only if $I = I'$ and $v' - v \in \lineal(C_I) = \operatorname{span}(\alpha_i \mid i \in I)$, let us denote by $2^{[n]}\boxtimes V$ the set of equivalence classes of pairs $(I,v) \in 2^{[n]}\times V$ where $(I,v) \sim (I',v')$ if $I = I'$ and $v - v'\in \operatorname{span}(\alpha_i \mid i\in I)$.  The set $2^{[n]}\boxtimes V$ is in bijection with the $W$-orbits of $\tran(\Sigma_\Phi^\vee)$.

\begin{corollary}\label{cor:G+invar}
Write $\{U_{I,v}\}_{(I,v) \in 2^{[n]}\boxtimes V}$ for the standard basis of $\QQ^{\oplus (2^{[n]}\boxtimes V)}$.  The map
\[
\mathcal G^+: \mathsf{GP}_\Phi^+ \to \QQ^{\oplus (2^{[n]}\boxtimes V)}\quad\text{defined by}\quad P \mapsto \sum_{(I,v) \in 2^{[n]}\boxtimes V} \Big(\sum_{w\in W} \tc{w\cdot(C_I+v)}(P)\Big) U_{I,v}
\]
is the universal valuative invariant over $\QQ$.  That is, for any valuative invariant $g: \mathsf{GP}_\Phi^+ \to A$ to a $\QQ$-vector space $A$, there exists a unique linear map $\psi:  \QQ^{\oplus (2^{[n]}\boxtimes V)} \to A$ such that $\psi \circ \mathcal G^+ = g$.
\end{corollary}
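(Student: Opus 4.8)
The plan is to obtain \Cref{cor:G+invar} from \Cref{mainthm:Finvar} by the averaging construction of \Cref{lem:averagemap}, the crucial link being that the universal valuative function $\mathcal F$ of \Cref{mainthm:Finvar} is $W$-equivariant. Let $W$ act on $\ZZ^{\oplus\tran(\Sigma_\Phi^\vee)}$ by permuting the basis $\{\be_{C+v}\}$ along the $W$-action on $\tran(\Sigma_\Phi^\vee)$, which makes sense because $\Sigma_\Phi^\vee$ is $W$-invariant. Since $W$ acts on $V$ by linear isometries, a translate $C+v$ tightly contains $P$ if and only if $w\cdot(C+v)$ tightly contains $w\cdot P$, so $\mathcal F(\one_{w\cdot P})=w\cdot\mathcal F(\one_P)$, i.e.\ $\mathcal F$ is $W$-equivariant. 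Extending scalars to $\QQ$, \Cref{mainthm:Finvar} then gives a $W$-equivariant $\QQ$-linear isomorphism $\mathcal F_\QQ\colon U:=\mathbb I(\mathsf{GP}_\Phi^+)\otimes\QQ\overset{\sim}{\to}\QQ^{\oplus\tran(\Sigma_\Phi^\vee)}$; being an equivariant isomorphism, it commutes with the average maps on both sides and restricts to an isomorphism $U^W\overset{\sim}{\to}\bigl(\QQ^{\oplus\tran(\Sigma_\Phi^\vee)}\bigr)^W$ of $W$-fixed subspaces.

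Next I would reformulate the universal property. A valuative invariant $g\colon\mathsf{GP}_\Phi^+\to A$ into a $\QQ$-vector space $A$ is the same datum as a $W$-invariant $\QQ$-linear map $\widetilde g\colon U\to A$ with $\widetilde g(\one_P)=g(P)$: existence and uniqueness of such a linear extension is the definition of valuative together with the fact that $A$ is uniquely divisible and $U=\mathbb I(\mathsf{GP}_\Phi^+)\otimes\QQ$, while $W$-invariance of $g$ is equivalent to $W$-invariance of $\widetilde g$ since the $\one_P$ span $U$. By \Cref{lem:averagemap}(2), every such $\widetilde g$ factors uniquely through $\operatorname{avg}\colon U\to U^W$. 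Consequently $P\mapsto\operatorname{avg}(\one_P)$ is itself a universal valuative invariant, and so is its composite with any fixed linear isomorphism $U^W\overset{\sim}{\to}\QQ^{\oplus(2^{[n]}\boxtimes V)}$; the task is then to produce such an isomorphism carrying $P\mapsto\operatorname{avg}(\one_P)$ to $\mathcal G^+$.

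Transporting along $\mathcal F_\QQ$, this reduces to describing $\bigl(\QQ^{\oplus\tran(\Sigma_\Phi^\vee)}\bigr)^W$ and computing $\operatorname{avg}(\mathcal F(\one_P))$. Since $W$ is finite, the fixed subspace of the permutation module $\QQ^{\oplus\tran(\Sigma_\Phi^\vee)}$ has $\QQ$-basis the orbit sums $\sum_{D\in W\cdot(C_I+v)}\be_D$, one per $W$-orbit, and by the discussion preceding the corollary these orbits are indexed by $2^{[n]}\boxtimes V$; I would take the isomorphism sending the orbit sum of $C_I+v$ to $|W|\cdot U_{I,v}$. On the other hand, reading off the $\be_{D_0}$-coefficient of $\operatorname{avg}(\mathcal F(\one_P))=\tfrac1{|W|}\sum_{w\in W}w\cdot\bigl(\sum_{D\ \text{tightly containing}\ P}\be_D\bigr)$ gives $\tfrac1{|W|}\sum_{w\in W}\tc{w\cdot D_0}(P)$, which depends only on the $W$-orbit of $D_0$, so in orbit-sum coordinates $\operatorname{avg}(\mathcal F(\one_P))=\sum_{(I,v)\in 2^{[n]}\boxtimes V}\tfrac1{|W|}\bigl(\sum_{w\in W}\tc{w\cdot(C_I+v)}(P)\bigr)\cdot\bigl(\text{orbit sum of }C_I+v\bigr)$, and the chosen isomorphism sends this exactly to $\mathcal G^+(P)$.

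The one genuinely load-bearing point is the bookkeeping in this last step, and it is where I would expect to spend the most care: one checks that $\operatorname{avg}$ of the finite sum $\mathcal F(\one_P)$ is again a finite sum, that $\sum_{w\in W}\tc{w\cdot D_0}(P)$ is indeed constant on $W$-orbits (it equals $|\operatorname{Stab}_W(D_0)|$ times the number of translates in the orbit of $D_0$ that tightly contain $P$, so the stabilizer factors cancel conveniently against the $|W|$), and that the resulting map on orbit sums is an honest $\QQ$-linear isomorphism — which suffices, since universality of an object is unaffected by post-composing with an isomorphism of its target. Everything else is formal: the corollary is "take $W$-coinvariants of \Cref{mainthm:Finvar}", made precise by \Cref{lem:averagemap}.
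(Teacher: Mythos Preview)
Your proposal is correct and follows essentially the same approach as the paper: establish that $\mathcal F$ is $W$-equivariant, tensor with $\QQ$, apply the averaging projection of \Cref{lem:averagemap}, and identify $(\QQ^{\oplus\tran(\Sigma_\Phi^\vee)})^W$ with $\QQ^{\oplus(2^{[n]}\boxtimes V)}$ so that $\operatorname{avg}\circ\mathcal F$ becomes $\mathcal G^+$. The only cosmetic difference is your choice of identification (orbit sum $\mapsto |W|\,U_{I,v}$) versus the paper's ($\operatorname{avg}(\be_{C_I+v})\mapsto U_{I,v}$), and your explicit handling of stabilizers in the final bookkeeping; these lead to the same conclusion.
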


\begin{proof}
Let us denote $\mathbb I(\mathsf{GP}_\Phi^+)_\QQ := \mathbb I(\mathsf{GP}_\Phi^+) \otimes \QQ$, and again abuse notation to write $\mathcal G^+$ for the function $\mathcal G^+: \mathbb I(\mathsf{GP}_\Phi^+)_\QQ \to \QQ^{\oplus (2^{[n]}\boxtimes V)}$ defined by $\one_P \mapsto \mathcal G^+(P)$.
We need to show that $\mathcal G^+$ is a $\QQ$-linear isomorphism.

Let $W$ act on $\ZZ^{\oplus \tran(\Sigma_\Phi^\vee)}$ by its action on $ \tran(\Sigma_\Phi^\vee)$.
Note first that the isomorphism $\mathcal F: \mathbb I(\mathsf{GP}_\Phi^+) \overset\sim\to \ZZ^{\oplus \tran(\Sigma_\Phi^\vee)}$ in \Cref{thm:tangent cone basis j} is $W$-equivariant:  For $w\in W$ and a translate of a tangent cone $C+v$, we have $\tc{C+v}(P) = 1$ if and only if $\tc{w\cdot(C+v)}(w\cdot P)=1$ for any $P\in \mathsf{GP}_\Phi^+$, and hence
\[
\sum_{C+v\in \tran{\Sigma_\Phi^\vee}} \tc{C+v}(w\cdot P)\be_{C+v} = \sum_{C+v\in \tran{\Sigma_\Phi^\vee}} \tc{C+v}(P)\be_{w\cdot(C+v)}.
\]
We thus have a commuting diagram
\[
\begin{tikzcd}
&\mathbb I(\mathsf{GP}_\Phi^+)_\QQ \arrow[r, "\mathcal F", "\simeq"'] \arrow["\operatorname{avg}"', d]&\QQ^{\tran(\Sigma_\Phi^\vee)} \arrow[d, "\operatorname{avg}"]\\
&\mathbb I(\mathsf{GP}_\Phi^+)_\QQ^W \arrow[r, "\simeq"] &(\QQ^{\tran(\Sigma_\Phi^\vee)})^W.
\end{tikzcd}
\]
Since the $W$-orbits of $\tran(\Sigma_\Phi^\vee)$ are in bijection with $2^{[n]}\boxtimes V$, we identify $(\QQ^{\tran(\Sigma_\Phi^\vee)})^W$ with $\QQ^{\oplus(2^{[n]}\boxtimes V)}$ by identifying $\operatorname{avg}(\be_{C_I+v})$ with $U_{I,v}$.
Then the composition $\operatorname{avg} \circ \mathcal F$ is the map $\mathcal G^+$, since
\[
\begin{split}
(\operatorname{avg}\circ \mathcal F)(P) & = \operatorname{avg} \Big( \sum_{C+v \in \tran(\Sigma_\Phi^\vee)} \tc{C+v}(P) \be_{C+v} \Big)\\
&= \operatorname{avg} \Big(\sum_{(I,v) \in 2^{[n]}\boxtimes V \ }  \sum_{w\in W}\tc{w\cdot(C_I+v)}(P) \be_{w\cdot(C_I +v)} \Big)\\
&= \sum_{(I,v)\in 2^{[n]}\boxtimes V \ }\sum_{w\in W} \tc{w\cdot(C_I+v)}(P) \operatorname{avg}(\be_{w\cdot(C_I +v)})\\
&= \sum_{(I,v)\in 2^{[n]}\boxtimes V} \Big( \sum_{w\in W} \tc{w\cdot(C_I+v)}(P) \Big) U_{I,v}
\end{split}
\]
\Cref{lem:averagemap} now implies that $\mathcal G^+$ is the universal valuative invariant.
\end{proof}

\subsection{Coxeter matroids, the \texorpdfstring{$\mathcal G$}{G}-invariant, and interlace polynomials}\label{sec:Coxeter matroids and the G-invariant}

We start with a brief treatment of Coxeter matroids. See \cite{BGW03} for a detailed account.

\medskip
We use $u \leq u'$ for the {Bruhat order} on the elements $u,u'$ of $W$, and write $u \leq^w u'$ to mean $w^{-1} u \leq w^{-1} u'$.  For a parabolic subgroup $W_I$ of $W$ corresponding to a subset $I\subseteq [n]$, the Bruhat order on $W/W_I$ is given by $B \leq^w B'$ for $B,B'\in W/W_I$ if $u \leq^w u'$ for some $u\in B$ and $u'\in B'$.  Recalling that the stabilizer of $\varpi_{[n]\setminus I} = \sum_{i\in [n]\setminus I}$ is $W_I$, for $B\in W/W_I$ we denote
\[
\delta_B := u \cdot \varpi_{[n]\setminus I} \quad\text{for any $u\in B$}.
\]
For a subset $S\subseteq W/W_I$, denote by $P_S$ the polytope
\[
P_S := \operatorname{Conv}(\delta_B \mid B \in S).
\]
The following generalization of \cite{GGMS87} establishes Coxeter matroids as subfamilies of generalized Coxeter permutohedra.

\begin{theorem}\cite[Theorem 6.3.1]{BGW03}\label{thm:GGMSCoxeter}
Let $S$ be a subset of $W/W_I$.  The following are equivalent:
\begin{enumerate}
\item The subset $S \subseteq W/W_I$ is a Coxeter matroid.  That is, for every $w\in W$ there exists a unique $\leq^w$-minimal element in $S$ (\Cref{def:Coxetermatroid}).
\item The polytope $P_S$ is a deformation of $\Pi_\Phi$, that is, $P_S \in \mathsf{GP}_\Phi^+$.
\end{enumerate}
\end{theorem}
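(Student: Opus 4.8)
We want to show that $S \subseteq W/W_I$ is a Coxeter matroid if and only if $P_S$ is a deformation of $\Pi_\Phi$. Since this is the ``GGMS theorem'' in the Coxeter setting and is attributed to \cite[Theorem 6.3.1]{BGW03}, the plan is to reduce both conditions to a common intermediate characterization: the \emph{maximality property} of the normal fan. Recall that $P_S$ is a deformation of $\Pi_\Phi$ precisely when its normal fan $\Sigma_{P_S}$ coarsens the Coxeter fan $\Sigma_\Phi$, equivalently when, for each maximal cone $w\sigma_\emptyset$ (a Weyl chamber, possibly relabeled) of $\Sigma_\Phi$, the face of $P_S$ selected by a generic functional in that chamber is a single vertex. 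The first step is to make this precise: a polytope $P_S$ with $\operatorname{Vert}(P_S) \subseteq \{\delta_B \mid B \in W/W_I\}$ is a deformation of $\Pi_\Phi$ if and only if each Weyl chamber of $\Sigma_\Phi$ lies inside a single (full-dimensional) cone of $\Sigma_{P_S}$, i.e.\ every generic weight $y$ in a fixed chamber $w C$ selects the same vertex $\delta_B$ of $P_S$.

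The second step is to translate the vertex-selection condition into Bruhat-order language. For a chamber indexed by $w \in W$, maximizing $\langle y, \cdot\rangle$ over $P_S$ for $y$ generic in that chamber picks out the $\delta_B$ with $B \in S$ that is ``largest in the $w$-shifted order''; via the standard dictionary between the Bruhat order on $W/W_I$ and the geometry of the weight polytope (the fact that $\delta_B \leq^w \delta_{B'}$ as points, in the sense of being separated by the chamber walls, corresponds exactly to $B \leq^w B'$ in Bruhat order — this is the content of the theory in \cite[Ch.~3--4]{BGW03}), this face is a single vertex precisely when $S$ has a unique $\leq^w$-maximal (equivalently, after replacing $w$ by $w \cdot w_0$, a unique $\leq^w$-minimal) element. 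Running this over all $w \in W$ gives the equivalence with \Cref{def:Coxetermatroid}. Concretely: $(1) \Rightarrow (2)$ because uniqueness of $\min^w(M)$ for all $w$ forces every chamber to select a single vertex, hence $\Sigma_{P_S}$ coarsens $\Sigma_\Phi$; and $(2) \Rightarrow (1)$ because if $\Sigma_{P_S}$ coarsens $\Sigma_\Phi$ then each chamber selects a face that is a vertex, and that vertex is necessarily the unique $\leq^w$-extremal element of $S$.

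The main obstacle is the careful verification of the ``dictionary'' lemma: that for generic $y$ in the Weyl chamber associated to $w$, the linear functional $\langle y, \cdot \rangle$ attains its maximum over $\{\delta_B \mid B \in S\}$ at $\delta_B$ if and only if $B$ is the (unique) $\leq^w$-maximal element of $S$. This requires knowing that the Bruhat order on $W/W_I$ is precisely the order induced on the $W$-orbit $W \cdot \varpi_{[n]\setminus I}$ by the ``$w$-dominance'' partial order on $V$ — a foundational fact about weight polytopes and Coxeter groups. Since the statement is quoted from \cite{BGW03}, I would cite the relevant structural results there (the matroid/polytope correspondence via the exchange/greedy characterization) rather than reprove them, and keep the argument at the level of: deformation $\Leftrightarrow$ each chamber selects one vertex $\Leftrightarrow$ unique $\leq^w$-minimum for each $w$ $\Leftrightarrow$ Coxeter matroid. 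One should also note the trivial direction that $P_S \in \mathsf{GP}_\Phi^+$ means $P_S \in \Def^+(\Pi_\Phi)$, but since $P_S$ is a polytope with vertices among the $\delta_B$, which all lie on the orbit polytope $\Pi_\Phi$ up to scaling, being an \emph{extended} deformation is the same as being an ordinary deformation here — a point worth a sentence but not a difficulty.
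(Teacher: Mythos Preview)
The paper does not prove this statement; it is quoted verbatim from \cite[Theorem~6.3.1]{BGW03} and used as a black box, so there is no in-paper argument to compare your proposal against. Your outline is the standard route (and essentially the one taken in \cite{BGW03}): reduce ``$\Sigma_{P_S}$ coarsens $\Sigma_\Phi$'' to ``each open Weyl chamber selects a single vertex of $P_S$'', and then use the dictionary between linear optimization on the orbit $\{\delta_B\}$ and the $w$-Bruhat order on $W/W_I$. The one piece of that dictionary the present paper does record is \Cref{lem: difference positive root}, which gives exactly the edge-direction/Bruhat correspondence you would need, though the paper states it only for the purpose of later use and not to reprove \Cref{thm:GGMSCoxeter}. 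Your final remark is also fine: since $P_S$ is a polytope its normal fan is complete, so membership in $\mathsf{GP}_\Phi^+=\Def^+(\Pi_\Phi)$ is the same as being a deformation of $\Pi_\Phi$.
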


For a Coxeter matroid $M \subseteq W/W_I$, we call the elements of $M$ the \textbf{bases} of $M$, and the $\leq^w$-minimal basis of $M$ is denoted $\min^w(M)$.  The polytope $P_M$ is called the \textbf{base polytope} of $M$.  Its vertices are $\{\delta_B \mid B\in M\}$.  \Cref{thm:GGMSCoxeter} states that Coxeter matroids of type $(\Phi,I)$ are exactly the polytopes in $\mathsf{GP}_\Phi^+$ whose vertices are subsets of $\mathscr V_{I} := \{\delta_B \mid B \in W/W_I\}$.
Hence, we consider
\[
\mathsf{Mat}_{\Phi,I} := \{\text{Coxeter matroids of type $(\Phi,I)$}\}
\]
as a subfamily of $\mathsf{GP}_\Phi^+$.  Recall that the \textbf{$\mathcal G$-invariant} of Coxeter matroids of type $(\Phi,I)$ is the function
\[
\mathcal G: \mathsf{Mat}_{\Phi,I} \to \QQ^{W/W_I} = \QQ\{U_B \mid B\in W/W_I\} \quad\text{defined by}\quad M \mapsto \sum_{w\in W} U_{w^{-1}  \min^w(M)}.
\]
\Cref{mainthm:Ginvar} states that $\mathcal G$ is the universal valuative invariant of $\mathsf{Mat}_{\Phi,I}$ over $\QQ$-coefficients.  For ordinary matroids, the specialization of the $\mathcal G$-invariant to the Tutte polynomial was computed in \cite{DF10}.  Here we feature an example of non-ordinary $(\Phi,I)$-matroids where the $\mathcal G$-invariant specializes to a well-studied polynomial invariant.

\medskip
Let $\Phi = B_n$ be the type $B$ root system, so that $W = S_n^B$, the signed permutation group.  Let $W_I$ be the parabolic subgroup such that $S_n^B/W_I = (\ZZ/2\ZZ)^n$.  In other words, we have that $\delta_{W_I}$ is the fundamental weight $\varpi_n = \frac{1}{2}(\be_1 + \cdots + \be_n)$, and that $\mathscr V_I = \{\frac{1}{2}(\pm \be_1 \pm \cdots \pm \be_n)\}$.  In this case, the $(\Phi,I)$-matroids are also known as \textbf{delta-matroids}, which were originally studied by Bouchet \cite{Bou89} and rediscovered as combinatorial abstractions of graphs embedded on surfaces \cite{CMNR19a, CMNR19b}.
A well-studied invariant of delta-matroids is the interlace polynomial.

\begin{definition}\label{defn:interlace}
Identifying the positive coordinates of an element in $\mathscr V_I = \{\frac{1}{2}(\pm \be_1 \pm \cdots \pm \be_n)\}$ with the subset of $[n] = \{1,\ldots, n\}$, consider a delta-matroid $M$ as a collection of subsets of $[n]$.  The \textbf{interlace polynomial} of $M$ is a univariate polynomial defined by
\[
q_M(x) := \sum_{A\subseteq [n]} x^{d_M(A)} \quad\text{where}\quad  d_M(A) := \min\{|(B-A) \cup (A-B)| \ \colon B\in M\} \text{ for } A\subseteq[n].
\]
\end{definition}

Interlace polynomials were originally defined in a study arising from DNA sequencing \cite{ABS00, ABS04}, and were generalized to delta-matroids in \cite{BH14}; see \cite{Mor17} for a survey.  Here we show that the interlace polynomial is a specialization of the $\mathcal G$-invariant in the following way.

\begin{theorem}\label{thm:interlacePolynomial}
Denote by $|\delta_B|$ the coordinate sum of the vector $\delta_B \in \mathscr V_I$.  For a delta-matroid $M$ we have
\[
q_M(x) = \frac{1}{n!} \sum_{w\in S_n^B} x^{-|\delta_{w^{-1}\min^w(M)}|+ \frac{n}{2}}.
\]
In particular, the interlace polynomial is a specialization of the $\mathcal G$-invariant, and hence is a valuative invariant of delta-matroids.
\end{theorem}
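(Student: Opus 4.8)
The plan is to identify the right-hand side as a specialization of the $\mathcal G$-invariant by matching each summand, and then to rewrite the left-hand side of the interlace polynomial identity as a sum over all of $W = S_n^B$. The key observation is that the $\mathcal G$-invariant $\mathcal G(M) = \sum_{w\in S_n^B} U_{w^{-1}\min^w(M)}$ records, with multiplicity, the elements of $W/W_I$ that arise as $w^{-1}\min^w(M)$. Composing with the linear map $\QQ^{W/W_I} \to \QQ[x,x^{-1}]$ sending $U_B \mapsto x^{-|\delta_B| + n/2}$ produces exactly $\sum_{w\in S_n^B} x^{-|\delta_{w^{-1}\min^w(M)}| + n/2}$, so once the identity with $n!\, q_M(x)$ is established, the ``specialization'' and ``valuative invariant'' conclusions follow immediately from \Cref{mainthm:Ginvar} (the map $g = $ this specialization is then $g = \psi \circ \mathcal G$ for the corresponding $\psi$, hence valuative since $\mathcal G$ is).

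The heart of the argument is therefore the counting identity
\[
n!\, q_M(x) = \sum_{w\in S_n^B} x^{-|\delta_{w^{-1}\min^w(M)}| + \frac{n}{2}}.
\]
First I would unwind the combinatorics of $S_n^B$ acting on $\mathscr V_I = \{\tfrac12(\pm\be_1 \pm \cdots \pm \be_n)\}$: identifying a vertex $\delta_B$ with its set of positive coordinates $A_B \subseteq [n]$, the coordinate sum is $|\delta_B| = \tfrac12(|A_B| - |[n]\setminus A_B|) = |A_B| - \tfrac n2$, so the exponent $-|\delta_{w^{-1}\min^w(M)}| + \tfrac n2 = n - |A_{w^{-1}\min^w(M)}|$ — wait, more usefully, $-|\delta_B| + \tfrac n2 = n - |A_B|$; I would double-check this normalization against $d_M(A)$. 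The minimal-basis description via the greedy algorithm (as in the Derksen–Fink discussion in the excerpt, generalized to type $B$ via the Bruhat-order characterization of \Cref{def:Coxetermatroid}) says: given a signed permutation $w$, ordering $[n]$ with signs according to $w^{-1}$, the set $w^{-1}\min^w(M)$ is the ``greedy'' basis of $M$ with respect to that weighting. Concretely, a signed permutation $w$ determines a subset $A_w \subseteq [n]$ (the coordinates it makes positive) together with a linear order refining it, and for fixed $A \subseteq [n]$ there are exactly $n!$ signed permutations $w$ with $A_w = A$ when we also remember the order — no: there are $2^n n!$ elements of $S_n^B$ total, and $2^n$ choices of sign vector, $n!$ choices of underlying permutation. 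I would argue that summing over the $n!$ underlying permutations for a fixed sign pattern $A$, the greedy basis $w^{-1}\min^w(M)$ stabilizes to the unique basis $B$ of $M$ minimizing the symmetric difference $|A \triangle A_B|$, i.e. realizing $d_M(A)$, and that $n - |A_{w^{-1}\min^w(M)}|$ then equals... here I must be careful: the exponent should come out to $d_M(A)$, so the claim to verify is $|\delta_{w^{-1}\min^w(M)}| = \tfrac n2 - d_M(A_w)$ whenever $w$ is ``generic for $A$'', which would give each of the $n!$ permutations refining sign pattern $A$ contributing $x^{d_M(A)}$.

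The main obstacle is exactly this last identification: showing that for a ``generic'' signed permutation $w$ lying over the sign pattern $A$, the minimal basis $\min^w(M)$ of the delta-matroid, transported back by $w^{-1}$, is the basis $B$ attaining $d_M(A)$, and that the coordinate sum of $\delta_B$ is precisely $\tfrac n2 - d_M(A)$. The first half is a greedy-algorithm argument specialized to $\Phi = B_n$, $S_n^B/W_I = (\ZZ/2\ZZ)^n$, where the Bruhat order on the hypercube is the componentwise order and minimizing over a $\leq^w$-chain reduces to independently choosing, coordinate by coordinate in the order dictated by $w$, whether to flip the sign — but the delta-matroid constraint couples the coordinates, so one must invoke the exchange property to see the greedy choice still lands in $M$ and minimizes the Hamming distance to $A_w$. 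The second half is the bookkeeping $|\delta_B| = |A_B| - \tfrac n2$ combined with $|A \triangle A_B| = d_M(A)$: since $|A\triangle A_B| = |A| + |A_B| - 2|A\cap A_B|$, and the greedy basis keeps $A$'s choices wherever feasible, one expects $|A\triangle A_B| = (\text{number of coordinates where $B$ is forced to disagree with $A$})$, and a short computation relates this to $\tfrac n2 - |\delta_B|$ after accounting for how flipping affects the coordinate sum. Once this per-$A$ evaluation is in hand, summing over the $2^n$ sign patterns gives $n! \sum_{A\subseteq[n]} x^{d_M(A)} = n!\, q_M(x)$, completing the proof; I would also remark that the apparent asymmetry (dividing by $n!$ rather than $|S_n^B| = 2^n n!$) is explained by the fact that $q_M(x)$ itself sums over all $2^n$ subsets $A$.
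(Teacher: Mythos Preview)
Your overall architecture is right: partition $S_n^B$ into the $2^n$ fibers over sign patterns $A\subseteq[n]$, each of size $|W_I|=n!$, and show every summand in the fiber over $A$ contributes $x^{d_M(A)}$. The gap is in the execution of that last step.

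You conflate $w^{-1}\min^w(M)$ with $\min^w(M)$. The basis of $M$ attaining $d_M(A)$ is $\min^w(M)$; the element $w^{-1}\min^w(M)\in W/W_I$ is generally \emph{not} a basis of $M$, so your sentence ``$\min^w(M)$ \ldots transported back by $w^{-1}$, is the basis $B$ attaining $d_M(A)$'' is literally false, and the coordinate-sum identity you then aim for, $|\delta_{\min^w(M)}|=\tfrac n2 - d_M(A)$, is wrong (take $M=\{\emptyset\}$ and $A=[n]$). What the theorem actually needs is $|\delta_{w^{-1}\min^w(M)}|=\tfrac n2 - d_M(A)$, and your bookkeeping never addresses the $w^{-1}$. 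Relatedly, the minimizer of $|A\triangle B|$ over $B\in M$ need not be unique, so ``stabilizes to the unique basis'' and the appeal to \emph{generic} $w$ are both misguided: the identity must hold for \emph{every} $w$ in the fiber, and it does.

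The paper bypasses all greedy/exchange reasoning with a two-line inner product computation. For subsets $A,B\subseteq[n]$ one checks directly that $|A\triangle B| = -\langle 2\delta_A,\delta_B\rangle + \tfrac n2$, so minimizing $|A\triangle B|$ over $B\in M$ is the same as maximizing $\langle \delta_A,\delta_B\rangle$, which is exactly what $\min^w(M)$ does for any $w$ with $\delta_{wW_I}=\delta_A$. Since $2\delta_{W_I}=(1,\ldots,1)$ and the inner product is $W$-invariant,
\[
|\delta_{w^{-1}\min^w(M)}|=\langle 2\delta_{W_I},\, w^{-1}\delta_{\min^w(M)}\rangle=\langle 2\delta_{A},\, \delta_{\min^w(M)}\rangle=\tfrac n2 - d_M(A),
\]
valid for every $w$ in the fiber over $A$. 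This is the missing computation that your greedy sketch was meant to replace.
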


\begin{proof}
Again identifying the positive coordinates of an element in $\mathscr V_I = \{\frac{1}{2}(\pm \be_1 \pm \cdots \pm \be_n)\}$ with the subset of $[n] = \{1,\ldots, n\}$, let us write $\delta_A := \frac{1}{2}\left(\sum_{i\in A} \be_i - \sum_{i\notin A}\be_i \right)$ for a subset $A\subseteq [n]$.  We claim that if $w\in W$ satisfies $\delta_{wW_I} = \delta_A$, then
\[
d_M(A) = -|\delta_{w^{-1}\min^w(M)}| + \frac{n}{2}.
\]
Since $W_I$, which is the stabilizer of $\delta_{W_I}$, has order $n!$, the proposition then follows immediately from the claim and the definition of the interlace polynomial.

For the proof of our claim, first observe that for two subsets $A,B \subseteq [n]$, one has
\[
|(B-A) \cup (A-B)| = - \langle 2\delta_A, \delta_B\rangle + \frac{n}{2}.
\]
Thus, when $A$ is fixed, the elements $B\in M$ that achieve the minimum value $d_M(A)$ are exactly the ones that maximize the pairing $\langle \delta_A, \delta_B\rangle$.  Such $B\in M$ are exactly $\min^w(M)$ as $w\in W$ ranges over all elements of $W$ such that $\delta_{w\cdot W_I} = \delta_{A}$.  Thus, we have
\[
d_M(A) = -\langle 2 \delta_{wW_I}, \delta_{\min^w(M)}\rangle + \frac{n}{2} = -\langle 2 \delta_{W_I}, \delta_{w^{-1}\min^w(M)}\rangle + \frac{n}{2} = -|\delta_{w^{-1}\min^w(M)}| + \frac{n}{2},
\]
as claimed.
\end{proof}

\subsection{Proof of \texorpdfstring{\Cref{mainthm:Ginvar}}{Theorem B}}\label{sec:ProofGinvar}
Let us fix $I\subseteq [n]$, and recall the notation $\mathscr V_I = \{\delta_B \mid B\in W/W_I\}$.
We first recall a standard fact relating the positions of the points $\mathscr V_I$ to the Bruhat order on $W/W_I$.

\begin{lemma}\label{lem: difference positive root}\cite[Lemma 6.2.4]{BGW03}
Let $B,B'\in W/W_I$ be such that $\delta_{B'} - \delta_{B} = \lambda \alpha$ for some root $\alpha \in R$ and $\lambda\geq 0$.    Then for $w\in W$,  one has $B\leq^w B'$ if and only if $\alpha \in w \cdot R^+$.  In particular, if $B\leq^w B'$, then $\delta_{B'} \in w\cdot C_\emptyset + \delta_B$, where $w\cdot C_\emptyset$ is the $w$-permutation of the positive root cone $C_\emptyset = \operatorname{Cone}(\alpha_1, \ldots, \alpha_n)$.
\end{lemma}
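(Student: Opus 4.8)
The plan is to pass to the case $w=e$ using the $W$-action, reinterpret the hypothesis $\delta_{B'}-\delta_B=\lambda\alpha$ as the statement that $B$ and $B'$ differ by the single reflection $s_\alpha\in W$, and then finish via the reflection description of the Bruhat order on $W/W_I$. For the reduction, observe that left translation by $w^{-1}$ is an order-automorphism of $(W/W_I,\le)$ which intertwines $\leq^w$ with $\leq$, that $\delta_{w^{-1}\cdot B}=w^{-1}\delta_B$, and that $\alpha\in w\cdot R^+$ iff $w^{-1}\alpha\in R^+$; replacing $(B,B',\alpha)$ by $(w^{-1}\cdot B,\,w^{-1}\cdot B',\,w^{-1}\alpha)$ we may therefore assume $w=e$, so the task becomes: if $\delta_{B'}-\delta_B=\lambda\alpha$ for a root $\alpha$ and $\lambda\ge 0$, then $B\le B'$ in the Bruhat order on $W/W_I$ iff $\alpha\in R^+$.

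\emph{The pair differs by a reflection.} If $\lambda=0$ then $\delta_B=\delta_{B'}$; since $B\mapsto\delta_B$ is a bijection of $W/W_I$ with the orbit $W\cdot\varpi_{[n]\setminus I}$ (the stabilizer of $\varpi_{[n]\setminus I}$ being $W_I$), this forces $B=B'$ and the claim is vacuous, so assume $\lambda>0$. As $\delta_B$ and $\delta_{B'}$ lie in a common $W$-orbit and $W$ acts orthogonally, $\langle\delta_B,\delta_B\rangle=\langle\delta_{B'},\delta_{B'}\rangle$, whence $0=\langle\delta_B-\delta_{B'},\,\delta_B+\delta_{B'}\rangle=-\lambda\langle\alpha,\,\delta_B+\delta_{B'}\rangle$, so the midpoint $m:=\tfrac12(\delta_B+\delta_{B'})$ lies on $\{x:\langle\alpha,x\rangle=0\}$. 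Writing $\delta_B=m-\tfrac\lambda2\alpha$ and $\delta_{B'}=m+\tfrac\lambda2\alpha$, the reflection $s_\alpha$ (which fixes $\alpha^\perp$ pointwise and negates $\alpha$) carries $\delta_B$ to $\delta_{B'}$; by $W$-equivariance and injectivity of $B\mapsto\delta_B$ we conclude $B'=s_\alpha\cdot B$, and the fact that $\delta_{B'}-\delta_B$ is a \emph{positive} multiple of $\alpha$ is equivalent to $\langle\delta_B,\alpha\rangle<0$.

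\emph{Conclusion.} It remains to show that for $B\in W/W_I$ and a root $\alpha$ with $\langle\delta_B,\alpha\rangle<0$ one has $B\le s_\alpha\cdot B$ exactly when $\alpha$ is positive. Taking the minimal-length coset representative $u\in W^I$ of $B$, the parabolic lifting/exchange property gives that $B$ and $s_\alpha\cdot B=s_\alpha u W_I$ are Bruhat-comparable, and which of $B\le s_\alpha\cdot B$ or $s_\alpha\cdot B\le B$ holds is decided by whether $\ell(s_\alpha u)>\ell(u)$ or $\ell(s_\alpha u)<\ell(u)$; the classical length-versus-root criterion expresses this through the sign of $u^{-1}\alpha$, and since $\langle\delta_B,\alpha\rangle=\langle\varpi_{[n]\setminus I},\,u^{-1}\alpha\rangle<0$ with $\varpi_{[n]\setminus I}$ dominant, that sign is forced, yielding the asserted equivalence. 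The ``in particular'' then follows at once: $B\le^w B'$ implies $\alpha\in w\cdot R^+$, hence $\delta_{B'}-\delta_B=\lambda\alpha\in\mathbb R_{\ge 0}\cdot(w\cdot R^+)=w\cdot\operatorname{Cone}(\alpha_1,\dots,\alpha_n)=w\cdot C_\emptyset$, i.e.\ $\delta_{B'}\in w\cdot C_\emptyset+\delta_B$.

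\emph{Main obstacle.} The delicate part is this last step: aligning the Bruhat order on the quotient $W/W_I$ with that on $W$ (the choice of coset representative and the lifting property) while tracking the orientation conventions so that the positivity of $\alpha$ lands on the correct side. One can bypass the coset bookkeeping altogether by using instead the description of the Bruhat order on the orbit $W\cdot\varpi_{[n]\setminus I}$ as the reflexive--transitive closure of the relation $\delta\rightsquigarrow s_\beta\delta$ over positive roots $\beta$ with $\langle\delta,\beta\rangle>0$: each such step moves $\delta$ by a positive multiple of a root, so, after fixing the sign of $\alpha$ appropriately, $\langle\delta_B,\alpha\rangle<0$ identifies $\delta_B\rightsquigarrow\delta_{B'}$ as a single allowed step, while conversely a $\rightsquigarrow$-chain from $\delta_B$ to $\delta_{B'}$ exhibits $\delta_{B'}-\delta_B$ as a nonnegative combination of roots of a fixed sign. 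Since $\delta_{B'}-\delta_B=\lambda\alpha$ is a scalar multiple of the single root $\alpha$, this pins down whether $\alpha\in R^+$, giving both implications simultaneously and matching the statement.
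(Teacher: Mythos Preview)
The paper does not prove this lemma; it is quoted from \cite[Lemma~6.2.4]{BGW03}, so there is no proof in the paper to compare against.

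Your reduction to $w=e$ and the identification $B'=s_\alpha\cdot B$ via the equal-norm argument are correct and are the natural first moves. The gap is in the ``Conclusion'' paragraph: you assert that ``that sign is forced, yielding the asserted equivalence'' but never carry out the verification, and this is exactly the step that cannot be elided. Working it through with the standard length--root criterion (for $\beta\in R^+$ one has $\ell(s_\beta u)>\ell(u)\iff u^{-1}\beta\in R^+$) together with your own inequality $\langle\varpi_{[n]\setminus I},\,u^{-1}\alpha\rangle<0$ (which forces $u^{-1}\alpha\in R^-$), one actually obtains $B\le B'\iff\alpha\in R^-$, the \emph{opposite} of the stated conclusion. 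The same reversal appears in your alternative $\rightsquigarrow$-argument: a covering step $\delta\rightsquigarrow s_\beta\delta$ with $\beta>0$ and $\langle\delta,\beta\rangle>0$ satisfies $s_\beta\delta-\delta=-\langle\delta,\beta^\vee\rangle\beta$, a \emph{negative} multiple of $\beta$, so a Bruhat-increasing chain places $\delta_{B'}-\delta_B$ in $-C_\emptyset$ rather than $C_\emptyset$. This does not mean your strategy is wrong; it means that the orientation conventions in play (the order on $W/W_I$ used in \cite{BGW03}, the map $B\mapsto\delta_B$ with $\delta_{eW_I}$ dominant, and which of $\pm C_\emptyset$ is the tangent cone at the relevant vertex) must be reconciled explicitly before the final implication can be claimed. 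Your ``Main obstacle'' paragraph correctly identifies this as the delicate point but does not actually resolve it.
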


\Cref{lem: difference positive root} allows us to express the $\mathcal G$-invariant in terms of tight containments in the following way, thereby showing that it is a valuative invariant.

\begin{proposition}\label{prop:GinvarByTC}
For any Coxeter matroid $M \in \mathsf{Mat}_{\Phi,I}$, we have an equality
\[
\mathcal G (M) = \sum_{B\in W/W_I} \Big(\sum_{w\in W} \tc{w\cdot (C_\emptyset + \delta_B)}(P_M) \Big)U_B.
\]
In particular, the $\mathcal G$-invariant is valuative, and is an invariant.
\end{proposition}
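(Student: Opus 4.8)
The plan is to show, for each fixed $w \in W$, that the single basis vector $U_{w^{-1}\cdot\min^w(M)}$ appearing in the definition of $\mathcal{G}(M)$ matches the contribution of $w$ to the right-hand side, namely that for every $B \in W/W_I$ one has $\tc{w\cdot(C_\emptyset + \delta_B)}(P_M) = 1$ precisely when $B = w^{-1}\cdot\min^w(M)$, and $0$ otherwise. Summing this equality over all $w \in W$ then yields the claimed identity term by term.

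To prove the key equality, fix $w \in W$ and let $B_0 := w^{-1}\cdot\min^w(M)$, so that $\delta_{w\cdot B_0} = \delta_{\min^w(M)}$ is the vertex of $P_M$ that is $\leq^w$-minimal; equivalently $\delta_{w\cdot B_0}$ minimizes the linear functional dual to $w\cdot C_\emptyset$ over the vertices $\{\delta_{B'} : B' \in M\}$ of $P_M$. First I would check tight containment in $w\cdot(C_\emptyset + \delta_{B_0})$: by \Cref{lem: difference positive root}, for every basis $B'$ of $M$ we have $B_0 \leq^w w^{-1}B'$ (since $B_0$ is $\leq^w$-minimal after the $w$-twist, this needs the translation of the lemma into the present indexing), hence $\delta_{B'} \in w\cdot C_\emptyset + \delta_{w\cdot B_0}$; since $P_M$ is the convex hull of its vertices and $w\cdot C_\emptyset + \delta_{w\cdot B_0}$ is convex, $P_M \subseteq w\cdot(C_\emptyset + \delta_{w\cdot B_0})$. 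Moreover $\delta_{w\cdot B_0} \in P_M$ lies in $\lineal(w\cdot C_\emptyset) + \delta_{w\cdot B_0}$ because $w\cdot C_\emptyset$ is a full-dimensional pointed cone, so $\lineal(w\cdot C_\emptyset) = \{0\}$ and the lineality-translate is the single point $\delta_{w\cdot B_0}$, which is a vertex of $P_M$. This gives $\tc{w\cdot(C_\emptyset+\delta_{B_0})}(P_M) = 1$.

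Next I would rule out tight containment in $w\cdot(C_\emptyset + \delta_B)$ for $B \neq B_0$. Since $\lineal(w\cdot C_\emptyset) = \{0\}$, tight containment of $P_M$ in $w\cdot C_\emptyset + \delta_B$ forces $\delta_B \in P_M$ and $P_M \subseteq w\cdot C_\emptyset + \delta_B$; the latter says $\delta_B$ is a vertex of $P_M$ minimizing the same linear functional (dual to $w\cdot C_\emptyset$) that $\delta_{w\cdot B_0}$ minimizes — but a generic functional on $w\cdot C_\emptyset$'s dual relative interior is maximized/minimized at a unique vertex, and more to the point $\min^w(M)$ is by definition the \emph{unique} $\leq^w$-minimal basis, so $\delta_B = \delta_{w\cdot B_0}$, i.e. $B = B_0$. (A mild care point: one should pick the functional in the relative interior of the dual cone so that the minimizing face is exactly the vertex; this is exactly the content of \Cref{lem: tangent cone tight containment is half space containment} combined with uniqueness in \Cref{def:Coxetermatroid}.) Once the pointwise equality is established, the displayed identity for $\mathcal{G}(M)$ follows by summing over $w$, and valuativeness-plus-invariance is immediate: each $\tc{w\cdot(C_\emptyset+\delta_B)}$ is a valuation on $\Def^+(\Pi_\Phi) \supseteq \mathsf{Mat}_{\Phi,I}$ by \Cref{thm: tight containment is val} (applied with $Q = \Pi_\Phi$, whose tangent cones include the $W$-translates of $C_\emptyset$), finite $\ZZ$-linear combinations of valuations restricted to $\mathsf{Mat}_{\Phi,I}$ are valuations, and the whole expression is manifestly $W$-invariant because replacing $M$ by $u\cdot M$ reindexes the inner sum over $w$ by $w \mapsto uw$ while permuting the outer index $B$ correspondingly.

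The main obstacle I anticipate is purely bookkeeping rather than conceptual: carefully matching the "$w^{-1}$-twist" conventions in \Cref{def:Coxetermatroid}, \Cref{lem: difference positive root}, and the definition of $\mathcal{G}$, so that "$\leq^w$-minimal element of $M$" translates correctly into "$\delta$ minimizes the functional dual to $w\cdot C_\emptyset$" and into "$P_M$ is tightly contained in the translate of $w\cdot C_\emptyset$ at that vertex." All the geometric inputs — tangent cones of $\Pi_\Phi$ are the $W$-translates of the $C_I$, tight containment is valuative, and a pointed cone has trivial lineality so tight containment pins down a vertex — are already available in the excerpt; the work is in aligning the indexing so the two sides visibly agree summand by summand.
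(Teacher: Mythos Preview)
Your proposal is correct and follows essentially the same route as the paper: fix $w$, use \Cref{lem: difference positive root} to show $w\cdot(C_\emptyset+\delta_{B_0})$ tightly contains $P_M$ for $B_0=w^{-1}\min^w(M)$, use strong convexity of $w\cdot C_\emptyset$ (trivial lineality) to get uniqueness, sum over $w$, and deduce valuativeness from \Cref{thm: tight containment is val}. One small bookkeeping slip to fix: in your uniqueness paragraph the apex of $w\cdot(C_\emptyset+\delta_B)$ is $\delta_{wB}$, not $\delta_B$, so tight containment forces $\delta_{wB}\in P_M$ and hence $wB=\min^w(M)$; also, in the invariance check only the inner sum over $w$ is reindexed (the outer index $B$ stays put).
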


\begin{proof}
For a fixed $w\in W$, the base polytope $P_M$ is tightly contained in at most one translate $w\cdot C_\emptyset + \delta_B$ of the strongly convex cone $w \cdot C_\emptyset$.  Since $\min^w(M)$ is the unique minimal element in $M$ with respect to $\leq^w$, \Cref{lem: difference positive root} implies that $w\cdot C_\emptyset + \delta_{\min^w(M)} = w\cdot(C_\emptyset + \delta_{w^{-1}\cdot\min^w(M)})$ tightly contains $P_M$.  We now compute that
\[
\begin{split}
\sum_{B\in W/W_I} \Big(\sum_{w\in W} \tc{w\cdot (C_\emptyset + \delta_B)}(P_M) \Big)U_B &= \sum_{w\in W} \sum_{B\in W/W_I} \tc{w\cdot (C_\emptyset + \delta_B)}(P_M)U_B\\
&= \sum_{w\in W} U_{w^{-1}\cdot\min^w(M)},
\end{split}
\]
as desired.  The function $\mathcal G$ is valuative since the functions $\tc{w\cdot(C_\emptyset + \delta_B)}$ are valuative on $\mathsf{GP}_\Phi^+$ by \Cref{thm: tight containment is val}, and hence valuative on the subfamily $\mathsf{Mat}_{\Phi,I}$.  The function $\mathcal G$ is evidently $W$-invariant.
\end{proof}

We can now prove the first half of \Cref{mainthm:Ginvar}.

\begin{proposition}\label{prop: existence}
For any valuative invariant $g: \mathsf{Mat}_{\Phi,I} \to A$ to a $\QQ$-vector space $A$, there exists a map $\psi: \QQ^{W/W_I} \to A$ such that $g= \psi\circ \mathcal G$.
\end{proposition}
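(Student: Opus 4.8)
The plan is to realise $\mathsf{Mat}_{\Phi,I}$ as a \emph{restricted} family of deformations and then combine \Cref{prop:restrictedFinvar} with the tight-containment description of $\mathcal G$ from \Cref{prop:GinvarByTC}; no extension of $g$ to a larger family is needed.

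\textbf{Step 1 (realise $\mathsf{Mat}_{\Phi,I}$ as $\Def(\Pi_\Phi)|_{\mathscr V_I}$).} Fix a $\Phi$-permutohedron $\Pi_\Phi$ large enough that it contains $P_{W/W_I}$. Then $\mathscr V_I = \operatorname{Vert}(P_{W/W_I})$ is the vertex set of a deformation of $\Pi_\Phi$, and since $\mathscr V_I \subseteq P_{W/W_I}\subseteq \Pi_\Phi$, every polytope with vertices in $\mathscr V_I$ lies in $\Pi_\Phi$; hence by \Cref{thm:GGMSCoxeter} we have $\mathsf{Mat}_{\Phi,I} = \Def(\Pi_\Phi)|_{\mathscr V_I}$. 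Apply \Cref{prop:restrictedFinvar} with $Q = \Pi_\Phi$: it supplies the $\ZZ$-linear \emph{injection} $\mathcal F_{\mathscr V_I}\colon \mathbb I(\mathsf{Mat}_{\Phi,I}) \to \ZZ^{X_{\mathscr V_I}}$, where $X_{\mathscr V_I} = \{C+v \mid C\in \min(\Sigma_{\Pi_\Phi}^\vee),\ v\in \mathscr V_I\}$. The minimal tangent cones of the permutohedron $\Pi_\Phi$ are exactly the vertex cones $\{w\cdot C_\emptyset \mid w\in W\}$ (translates of the positive root cone), and the formation of $\mathcal F_{\mathscr V_I}$ is $W$-equivariant, just as for $\mathcal F$ in the proof of \Cref{cor:G+invar}.

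\textbf{Step 2 (identify $\mathcal G$ with $\mathcal F_{\mathscr V_I}$ followed by an orbit sum).} By \Cref{prop:GinvarByTC}, $\mathcal G(M) = \sum_{B\in W/W_I}\bigl(\sum_{w\in W}\tc{w\cdot(C_\emptyset+\delta_B)}(P_M)\bigr)U_B$. Each cone $w\cdot(C_\emptyset+\delta_B) = w\cdot C_\emptyset + \delta_{wB}$ belongs to $X_{\mathscr V_I}$, so the inner sum is a sum of coordinates of $\mathcal F_{\mathscr V_I}(\one_{P_M})$. Using $\operatorname{Stab}_W(C_\emptyset) = \{e\}$, the $W$-orbits of $X_{\mathscr V_I}$ are in bijection with $W/W_I$: the orbit of $C_\emptyset + \delta_B$ consists of the cones $w\cdot C_\emptyset + \delta_{wB}$, i.e.\ of those $w'\cdot C_\emptyset + \delta_{B'}$ with $w'^{-1}B' = B$. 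It follows that $\mathcal G = \Sigma \circ \mathcal F_{\mathscr V_I}$, where $\Sigma\colon \QQ^{X_{\mathscr V_I}}\to \QQ^{W/W_I}$ sends $\be_{C+v}$ to $U_B$ for $B$ the label of the $W$-orbit of $C+v$. Since every such orbit has size $|W|$, the map $\Sigma$ equals, up to the linear isomorphism $(\QQ^{X_{\mathscr V_I}})^W \cong \QQ^{W/W_I}$, the map $|W|\cdot\operatorname{avg}$; in particular $\ker\Sigma = \ker\operatorname{avg}$ on $\QQ^{X_{\mathscr V_I}}$.

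\textbf{Step 3 (conclude).} Work over $\QQ$, and write $\operatorname{avg}$ also for the averaging projection of \Cref{lem:averagemap} on $\mathbb I(\mathsf{Mat}_{\Phi,I})_\QQ$. By $W$-equivariance of $\mathcal F_{\mathscr V_I}$ we have $\operatorname{avg}\circ\mathcal F_{\mathscr V_I} = \mathcal F_{\mathscr V_I}\circ\operatorname{avg}$, and since $\mathcal F_{\mathscr V_I}$ is injective, $\ker\mathcal G = \ker(\Sigma\circ\mathcal F_{\mathscr V_I}) = \ker(\operatorname{avg}\circ\mathcal F_{\mathscr V_I}) = \ker(\mathcal F_{\mathscr V_I}\circ\operatorname{avg}) = \ker\operatorname{avg} = \operatorname{span}\{w\cdot\one_P - \one_P \mid P\in\mathsf{Mat}_{\Phi,I},\ w\in W\}$. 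Now let $g\colon \mathsf{Mat}_{\Phi,I}\to A$ be a valuative invariant. By \Cref{lem:averagemap}(2) the induced linear map $\mathbb I(\mathsf{Mat}_{\Phi,I})_\QQ \to A$ factors through $\operatorname{avg}$, hence annihilates $\ker\operatorname{avg} = \ker\mathcal G$, hence descends to a linear map $\bar g$ on $\mathbb I(\mathsf{Mat}_{\Phi,I})_\QQ/\ker\mathcal G$, which $\mathcal G$ identifies with $\operatorname{im}\mathcal G\subseteq \QQ^{W/W_I}$, so that $g = \bar g\circ\mathcal G$. Extending $\bar g$ to a linear map $\psi\colon \QQ^{W/W_I}\to A$ (e.g.\ by zero on a complement of $\operatorname{im}\mathcal G$) gives $g = \psi\circ\mathcal G$.

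\textbf{Main obstacle.} The technical heart is Step 2: the bookkeeping that matches each term $\tc{w\cdot(C_\emptyset+\delta_B)}$ of $\mathcal G$ with the correct coordinate of $\mathcal F_{\mathscr V_I}$ and identifies the $W$-orbits of $X_{\mathscr V_I}$ with $W/W_I$, which rests on \Cref{lem: difference positive root} and \Cref{prop:GinvarByTC}; together with the (routine but necessary) verification in Step 1 that $\mathsf{Mat}_{\Phi,I} = \Def(\Pi_\Phi)|_{\mathscr V_I}$ for a sufficiently large $\Pi_\Phi$. Everything else is a formal consequence of \Cref{prop:restrictedFinvar} and \Cref{lem:averagemap}. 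Uniqueness of $\psi$ in \Cref{mainthm:Ginvar}, equivalently surjectivity of $\mathcal G$ onto $\QQ^{W/W_I}$, is a separate matter, handled via Coxeter Schubert matroids.
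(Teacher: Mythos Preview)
Your argument is correct, and it follows a slightly different (and in some ways cleaner) path than the paper's.

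\textbf{One small imprecision.} In Step~1 you write that \Cref{prop:restrictedFinvar} ``supplies the $\ZZ$-linear \emph{injection} $\mathcal F_{\mathscr V_I}$''. Strictly, that proposition asserts the injectivity of $\mathcal E_{\mathscr V_I}$, not of $\mathcal F_{\mathscr V_I}$. The injectivity of $\mathcal F_{\mathscr V_I}$ is, however, an immediate consequence of the commuting square there: $\mathcal E_{\mathscr V_I}\circ \mathcal F_{\mathscr V_I} = \mathcal F\circ\iota$ with $\mathcal F$ an isomorphism and $\iota$ the inclusion, so $\mathcal F_{\mathscr V_I}$ is injective. It is worth saying this explicitly. (Also, ``translates of the positive root cone'' in Step~1 should read ``$W$-images''.)

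\textbf{Comparison with the paper's proof.} The paper argues by \emph{enlarging the domain}: it uses the $W$-equivariant form of $\mathcal E_{\mathscr V_I}$ from \Cref{rem:restrictedFinvar} to embed the averaged picture for $\mathsf{Mat}_{\Phi,I}$ into that for $\mathsf{GP}_\Phi^+$, extends $g$ (somewhat arbitrarily) to a valuative invariant on all of $\mathbb I(\mathsf{GP}_\Phi^+)_\QQ$, and then invokes the already-established universality of $\mathcal G^+$ from \Cref{cor:G+invar}. Your argument instead stays entirely inside $\mathsf{Mat}_{\Phi,I}$: from the injectivity and $W$-equivariance of $\mathcal F_{\mathscr V_I}$ you read off directly that $\ker\mathcal G = \ker\operatorname{avg}$ on $\mathbb I(\mathsf{Mat}_{\Phi,I})_\QQ$, and then factor $g$ through the quotient. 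This avoids both the equivariant refinement of $\mathcal E_{\mathscr V_I}$ in \Cref{rem:restrictedFinvar} and the extension of $g$ to the larger family. The price is that your $\psi$ is only defined after choosing a complement of $\operatorname{im}\mathcal G$ (harmless here, since uniqueness is handled separately in \Cref{prop: uniqueness}); the paper's route produces $\psi$ more canonically via $\mathcal G^+$. Either way the substantive inputs are the same: \Cref{prop:restrictedFinvar}, \Cref{prop:GinvarByTC}, and \Cref{lem:averagemap}.
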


We will prove that the map $\psi$ is unique in \Cref{prop: uniqueness}, thereby finishing the proof of \Cref{mainthm:Ginvar}.

\begin{proof}
Using the notation as in the discussion preceding \Cref{prop:restrictedFinvar}, we have $\mathsf{Mat}_{\Phi,I} = \Def(\Pi_\Phi)|_{\mathscr V_I}$, where $\mathscr V_I = \{\delta_B \mid B\in W/W_I\}$.  Moreover, we have $X_{\mathscr V_I} = \{w \cdot C_\emptyset + \delta_B \mid w\in W,\ B\in W/W_I\}$ since the minimal cones of $\Sigma_\Phi^\vee$ are $W$-permutations of the positive root cone $C_\emptyset$.  Considering the action of $W$ on $\QQ^{X_{\mathscr V_I}}$, we can identify $(\QQ^{X_{\mathscr V_I}})^W$ with $\QQ^{W/W_I}$ by identifying $\operatorname{avg}(\be_{C_\emptyset + \delta_B})$ with $U_B$.  Then, the map $\mathcal G$ is the composition $\operatorname{avg} \circ \mathcal F_{\mathscr V_I}$, since
\[
\begin{split}
(\operatorname{avg} \circ \mathcal F_{\mathscr V_I})(P_M) &= \operatorname{avg} \Big( \sum_{B\in W/W_I \ }\sum_{w\in W} \tc{w\cdot (C_\emptyset + \delta_B)}(P_M)\be_{w\cdot(C_\emptyset+\delta_B)}  \Big)\\
 &= \sum_{B\in W/W_I} \Big(\sum_{w\in W} \tc{w\cdot (C_\emptyset + \delta_B)}(P_M) \Big)U_B \\
&=\mathcal G(M),
\end{split}
\]
where the last equality follows from \Cref{prop:GinvarByTC}.
Now, \Cref{prop:restrictedFinvar} states that there is an injection $\mathcal E_{\mathscr V_I}: \QQ^{X_I} \to \QQ^{\oplus \tran(\Sigma_\Phi^\vee)}$ fitting into the left square of the commuting diagram
\[
\begin{tikzcd}
\mathbb I(\mathsf{Mat}_{\Phi,I})_\QQ \arrow[r, "\mathcal F_{\mathscr V_I}"] \arrow[d, hook] & \QQ^{X_{\mathscr V_I}} \arrow[d, hook, "\mathcal E_{\mathscr V_I}"] \arrow[r, "\operatorname{avg}"] &\QQ^{W/W_I} \arrow[d, hook]\\
\mathbb I(\mathsf{GP}_\Phi^+)_\QQ \arrow[r, "\mathcal F"] & \QQ^{\oplus\tran(\Sigma_Q^\vee)} \arrow[r, "\operatorname{avg}"] &\QQ^{2^{[n]}\boxtimes V}.
\end{tikzcd}
\]
Moreover, \Cref{rem:restrictedFinvar} implies that over $\QQ$-coefficients the map $\mathcal E_{\mathscr V_I}$ can be made $W$-equivariant, so that the right square of the diagram above also commutes.  Extend the valuative invariant $g$ on $\mathbb I(\mathsf{Mat}_{\Phi,I})_\QQ$ to any valuative invariant on $\mathbb I(\mathsf{GP}_\Phi^+)_\QQ$, for example, by extending $g$ to a function on $\mathbb I(\mathsf{GP}_\Phi^+)_\QQ$ (which may not be $W$-invariant) and then precomposing with the averaging map.  Then, since $\mathcal G = \operatorname{avg} \circ \mathcal F_{\mathscr V_I}$, the universality of $\operatorname{avg} \circ \mathcal F = \mathcal G^+$ from \Cref{cor:G+invar} implies that there exists $\psi$ such that $g = \psi \circ \mathcal G$.
\end{proof}

We remark that the map $\mathcal{F}_{\mathscr V_I}$ in the proof above is generally not an isomorphism.  To prove the other half of \Cref{mainthm:Ginvar},
we first consider the following family of Coxeter matroids, which are special cases of Bruhat interval polytopes studied in \cite{TW15} and \cite{CDM20}.

\begin{proposition}\cite[Theorem 4.5]{CDM20}
For $B\in W/W_I$, the subset
\[
\Omega_{B} := \{B'\in W/W_I \mid B \leq B'\}
\]
is a Coxeter matroid.
\end{proposition}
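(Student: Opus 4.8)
The plan is to recognize $\Omega_B$ as a Bruhat interval and then either invoke a known classification or argue directly. Let $w_0$ be the longest element of $W$, so that $w_0W_I$ is the maximum of the Bruhat order on $W/W_I$; then $\Omega_B = \{C\in W/W_I \mid B \leq C \leq w_0W_I\}$ is the Bruhat interval $[B, w_0W_I]$, and $P_{\Omega_B}$ is a Bruhat interval polytope. By \cite[Theorem 4.5]{CDM20} (which extends \cite{TW15} from type $A$), every Bruhat interval polytope is a Coxeter matroid polytope, so \Cref{thm:GGMSCoxeter} gives the claim. When $\Phi$ is crystallographic one can also argue via realizability: a generic point of the appropriate Bruhat cell of the flag variety $G/P$ realizes the Coxeter matroid $\Omega_B$, and realizable Coxeter matroids are Coxeter matroids.

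For a self-contained proof, I would verify \Cref{def:Coxetermatroid} directly. Fix $w\in W$. Unwinding the definition of $\leq^w$ and substituting $D = w^{-1}C$, producing a unique $\leq^w$-minimal element of $\Omega_B$ amounts to showing that
\[
T_w := \{D\in W/W_I \mid B \leq wD\}
\]
has a unique Bruhat-minimal element $m_w$, in which case $\min^w(\Omega_B) = w\cdot m_w$. I would induct on the length $\ell(w)$. The base case $w = e$ is immediate, as $T_e = \Omega_B$ has Bruhat-minimum $B$. For the inductive step, write $w = sw'$ with $s$ a simple reflection and $\ell(w') = \ell(w) - 1$, and apply the lifting property of the Bruhat order, in its parabolic form for $W/W_I$, to rewrite the membership condition $B \leq s(w'D)$ in terms of conditions $B' \leq w'D$ with $B'\in\{B, sB\}$. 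This realizes $T_w$ as a union of one or two shifted copies of sets of the form $T_{w'}$ for suitable cosets, to each of which the inductive hypothesis applies, and one last application of the lifting property reconciles the candidate minimal elements into one.

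The main obstacle is the case analysis in this inductive step: the parabolic lifting property branches according to whether $s$ raises, lowers, or fixes each of $B$ and $w'D$ as an element of $W/W_I$, and the minimal-length coset representatives must be tracked throughout. I would also note that the shortcut available in type $A$ — realizing $P_{\Omega_B}$ as the intersection of the orbit polytope $P_{W/W_I}$ with a translate of a vertex tangent cone of $\Pi_\Phi$ — fails to yield a Coxeter matroid polytope for general $\Phi$, as remarked in the introduction, so the Bruhat-order combinatorics (or realizability) is genuinely needed here.
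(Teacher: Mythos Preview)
Your first paragraph is exactly the paper's treatment: the paper states this proposition with the citation to \cite[Theorem 4.5]{CDM20} and gives no independent proof, so your identification of $\Omega_B$ as the Bruhat interval $[B,\,w_0W_I]$ and invocation of that theorem matches precisely. Your additional sketches (realizability for crystallographic $\Phi$, and a direct inductive argument via the parabolic lifting property) go beyond what the paper offers; the paper only alludes to the realizability picture in the subsequent remark about torus-orbit closures in Bruhat cells, and does not attempt a self-contained combinatorial proof of the proposition itself.
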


\begin{definition}\label{def:schubert}
We call the Coxeter matroid $\Omega_{B}$ in the proposition above the \textbf{Coxeter Schubert matroid} with respect to $B \in W/W_I$.
\end{definition}

\begin{remark}\label{rem:geomrem1}
Suppose $\Phi$ is crystallographic.
Let $G$ be the associated Lie group with a chosen Borel subgroup arising from our fixed choice of positive system for $\Phi$.
Let $P_I$ be the parabolic subgroup of $G$ corresponding to the subset $I$ of simple roots, and $T$ the torus in $G$.
Coxeter Schubert matroids correspond to the Bruhat cells of the flag variety $G/P_I$ in the following way.
For $B\in W/W_I$, consider the torus-orbit closure $\overline{T\cdot x}$ of a general point $x$ in the Bruhat cell of $G/P_I$ corresponding to $B$.
The authors of \cite{TW15} show that the base polytope of $\Omega_B$ is the moment polytope of $\overline{T\cdot x}$, and hence a Coxeter matroid.
\end{remark}

The following key lemma establishes an ``upper-triangularity'' property of Coxeter Schubert matroids that we will need for establishing uniqueness of the map $\psi$ in \Cref{prop: uniqueness}.

\begin{lemma}\label{lem: upper triangle} Let $B\in W/W_I$.  There exist scalars $c^B_{B'} \in \QQ$ for $B'\leq B \in W/W_I$ such that
 \[ \mathcal{G}(\Omega_B) = \sum_{B' \leq B} c^B_{B'}U_{B'}, \quad\text{where}\quad c_B^B \not = 0.\]
\end{lemma}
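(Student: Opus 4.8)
The plan is to compute $\mathcal{G}(\Omega_B)$ directly using the tight-containment formula from \Cref{prop:GinvarByTC}, namely
\[
\mathcal{G}(\Omega_B) = \sum_{B'\in W/W_I}\Big(\sum_{w\in W} \tc{w\cdot(C_\emptyset + \delta_{B'})}(P_{\Omega_B})\Big)U_{B'},
\]
so that $c^B_{B'} = \sum_{w\in W}\tc{w\cdot(C_\emptyset + \delta_{B'})}(P_{\Omega_B}) = \#\{w\in W : \min^w(\Omega_B) = w\cdot B'\}$ (more precisely $w^{-1}\min^w(\Omega_B) = B'$). The first task is therefore to understand $\min^w(\Omega_B)$. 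Since $\Omega_B = \{B'' \mid B\le B''\}$ is the Bruhat-order principal up-set of $B$, for $w = \mathrm{id}$ the $\le$-minimal element of $\Omega_B$ is clearly $B$ itself; more generally $\min^w(\Omega_B)$ is the $\le^w$-minimal element of $\{B'' : B\le B''\}$. The key structural input I would invoke is the theory of the Bruhat order under left multiplication, or equivalently tools from the $0$-Hecke algebra alluded to in the introduction: for any $w$, the element $\min^w(\Omega_B)$ is obtained from $B$ by a well-defined ``Bruhat projection'' and in particular always satisfies $B \le \min^w(\Omega_B)$ in the ordinary Bruhat order? That is not quite what is needed; instead I want the opposite inequality for the index $B' = w^{-1}\min^w(\Omega_B)$.

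Concretely, the nonvanishing of $c^B_B$ is the easy half: taking $w=\mathrm{id}$ gives $\min^{\mathrm{id}}(\Omega_B) = B$, so $w^{-1}\min^w(\Omega_B) = B$ for at least one $w$ (indeed for every $w\in W_I$, since those fix the relevant cell structure), hence $c^B_B = \#\{w : w^{-1}\min^w(\Omega_B)=B\} \ge |W_I| > 0$. The substantive half is the upper-triangularity: I must show that whenever $c^B_{B'}\ne 0$, i.e.\ whenever there exists $w\in W$ with $w^{-1}\min^w(\Omega_B) = B'$, one has $B'\le B$. So fix such a $w$ and set $B^* := \min^w(\Omega_B) = w\cdot B'$. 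By definition $B^*\in\Omega_B$, so $B\le B^*$. Also $B^*$ is $\le^w$-minimal in $\Omega_B$, and since $B\in\Omega_B$ this gives $B^*\le^w B$, i.e.\ $w^{-1}B^* \le w^{-1}B$, i.e.\ $B' \le w^{-1}B$. I would then combine $B\le B^*$ with $B'\le w^{-1}B$ together with a ``length/rank'' comparison to force $B'\le B$: the point is that $\delta_{B^*} - \delta_B \in w\cdot C_\emptyset$ (by \Cref{lem: difference positive root}, since $B\le^w B^*$ as both lie in $\Omega_B$ and $B^* = \min^w$) while $\delta_B - \delta_{B^*}$... hmm, so $\delta_{B^*}-\delta_B$ lies in the $w$-translated positive cone; translating by $w^{-1}$, $\delta_{B'} - \delta_{w^{-1}B} \in C_\emptyset$, i.e.\ $w^{-1}B \le w^{-1}B'$ would need to be read off, but we already have $B'\le w^{-1}B$, giving $w^{-1}B' \ge$ and $\le$... the cleanest route is: $B^*$ is $\le^w$-minimal in $\Omega_B$ and $B$ lies in $\Omega_B$, and $\Omega_B$ is the ``full'' up-set, so actually $B^* = \min^w(\Omega_B)$ is characterized as the unique $\le^w$-minimal element of the principal Bruhat filter at $B$; a standard fact (the ``subword'' / lifting property of Bruhat order, which the authors intend to access via $0$-Hecke operators) then says $B^*$ is a Bruhat-projection image of $B$ satisfying $B\le B^*$ and $\ell(B^*)$ determined by $w$, and that $w^{-1}B^* \le B$ in the ordinary order. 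I would extract from this precisely $B' = w^{-1}B^* \le B$.

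The main obstacle I anticipate is establishing the implication ``$w^{-1}\min^w(\Omega_B) = B'$ $\Rightarrow$ $B'\le B$'' rigorously; this is exactly the place where the type-$A$ argument of Derksen--Fink (Schubert matroid polytope as an intersection with a vertex cone of the permutohedron) breaks down in general type, as the authors flag in \Cref{rem:badintersection}. My plan to overcome it is to prove the needed statement purely order-theoretically, using the Bruhat order's lifting/exchange property on the minimal-coset-representative system for $W/W_I$: characterize $\min^w(\Omega_B)$ as the output of applying the sequence of $0$-Hecke operators $\pi_{s_{i_1}},\dots,\pi_{s_{i_\ell}}$ (for a reduced word of $w$) to the coset $B$, and use the known facts that each $\pi_s$ either fixes a coset or moves it up by one in Bruhat order and that $\pi_s(B) \le s\cdot B'$ statements propagate, to conclude by induction on $\ell(w)$ that $w^{-1}\cdot(\pi_w \cdot B) \le B$. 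If a fully self-contained proof of this is too long, the fallback is to cite the corresponding statement about Bruhat interval polytopes / Demazure products from \cite{TW15} or \cite{CDM20}. Once this inequality is in hand, together with the $w\in W_I$ observation giving $c^B_B\ne 0$, the lemma follows immediately from the displayed formula for $c^B_{B'}$.
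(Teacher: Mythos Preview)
Your proposal is correct and arrives at essentially the same approach as the paper: reduce to showing $w^{-1}\min^w(\Omega_B)\le B$ for every $w$ (with $w=e$ giving $c^B_B\neq 0$), and establish this inequality via $0$-Hecke/Demazure-product techniques. The one simplification the paper makes is that it never identifies $\min^w(\Omega_B)$ exactly with $\pi_w(B)$: it just takes the element $c$ determined by $T_wT_b=\pm T_c$, notes $c\ge b$ so $c\in\Omega_b$, writes $c=w\cdot b'$ with $b'$ a subword of a reduced word for $b$, and concludes $\min(w^{-1}\Omega_b)\le b'\le b$ directly---so you can drop the unproved characterization of $\min^w(\Omega_B)$ and argue identically.
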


\begin{proof}[Proof of \Cref{lem: upper triangle}]
We claim that for $B \in W/W_I$ and $w \in W$, we have
 \[ w^{-1} \cdot \min ^{w}(\Omega_B) \leq B.\]
The lemma then follows immediately from the claim since
\[ \mathcal{G}(\Omega_B) = \sum_{w \in W} U_{w^{-1} \cdot \min^w(\Omega_B)},\]
and the coefficient $c_{B}^B \not = 0$ because $w^{-1} \cdot \min^w(\Omega_B) = B$ when $w = e$ the identity.
 
We now prove the claim by borrowing tools from 0-Hecke algebras.  Let us first recall some basic properties; see \cite{Nor79} for general reference.  The $0$-Hecke algebra $\mathcal{H}_0$ of $W$ is the unital $\mathbb Q$ algebra generated by $\{T_i\}_{i \in [n]}$, subject to the relations:

\begin{itemize}
    \item $T_i^2 = -T_i$ for all simple reflections $s_i$ ($i = 1, \ldots, n$), and 
    \item $(T_iT_jT_i \cdots)_{n_{ij}} = (T_jT_iT_j \cdots )_{n_{ij}}$ where $n_{ij}$ is the order of $s_is_j$ in $W$ and $(T_iT_jT_i \cdots)_{n_{ij}}$ is the product of the first $n_{ij}$ terms in the sequence $T_i, T_j, T_i,\ldots$.
\end{itemize}

For any $w = s_{i_1} \cdots s_{i_k} \in W$, let $T_w$ denote the product $T_{i_1} \cdots T_{i_k}$. One can show that this product is the same for any reduced expression of $w$. The only result we use is the following \cite[1.3]{Nor79}:
\begin{equation}\label{eq:Matosumo}
T_w \cdot T_{s_i} = \left \{
    \begin{array}{cc}
    T_{ws_i} & \text{if $\ell(ws_i) > \ell(w)$} \\
    -T_{w} & \text{else,}
    \end{array} \right. 
\qquad\text{and}\qquad
T_{s_i} \cdot T_w = \left \{
    \begin{array}{cc}
        T_{s_iw} & \text{if $\ell(s_iw) > \ell(w)$} \\
        -T_{w} & \text{else.}
    \end{array} \right .
\end{equation}
From this, it follows that for $u,v,w \in W$ one has
\begin{equation}\label{eq:conseq}
T_w \cdot T_u = \pm T_v \implies v\geq u \text{ and } v\geq w.
\end{equation}

For the claim, it suffices to prove the result for $W_I = \{e\}$ since we have a projection map from $W$ to $W/W_I$.  Fix $b\in W$.  Since $w^{-1} \cdot \min^w(\Omega_b) = \min(w^{-1} \cdot \Omega_b),$ we need to show that for all $w \in W$, we have 
\[\min(w^{-1} \cdot \Omega_b) \leq b.\]
To show this, we find an element $c \in \Omega_b$ and an element $d \leq b$ such that $c = w \cdot d$, as $d$ will then satisfy $\min(w^{-1} \cdot \Omega_b) \leq d \leq b$.
Using (\ref{eq:Matosumo}), let $c \in W$ be the unique element where $T_w \cdot T_b = \pm T_c$. By property \eqref{eq:conseq}, we have that $c \geq b$ and hence it is in $\Omega_b$. Say that $w = s_{j_1} \cdots s_{j_k}$ and $b = s_{i_1} \cdots s_{i_\ell}$ are reduced expressions, then applying (\ref{eq:Matosumo}) to $T_w \cdot T_b = T_w \cdot T_{s_{i_1}} \cdots T_{s_{i_\ell}}$, we obtain a reduced expression of $c$ as
 \[ c = (s_{j_1} \cdots s_{j_k})b'\]
where $b'$ is a subword of $s_{i_1} \cdots s_{i_{\ell}} = b$. Setting $d = b'$, we see that $d \leq b$ and that $c = w \cdot d$.
\end{proof}

\begin{proposition}\label{prop: uniqueness}
Let $g: \mathbb I(\mathsf{Mat}_{\Phi,I})_\QQ \to A$ be a valuative invariant into a $\QQ$-vector space $A$. There exists at most one linear map $\psi: \QQ^{W/W_I} \to A$ such that
 \[ \psi \circ \mathcal{G}(M) = g(M).\]
\end{proposition}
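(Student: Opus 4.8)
The plan is to show that the image of $\mathcal G$ spans $\QQ^{W/W_I}$; this immediately forces $\psi$ to be unique, since a linear map out of $\QQ^{W/W_I}$ is determined by its values on a spanning set, and $\psi\circ\mathcal G = g$ pins down $\psi$ on $\operatorname{im}(\mathcal G)$. Concretely, I would argue that the vectors $\{\mathcal G(\Omega_B) \mid B \in W/W_I\}$ form a basis of $\QQ^{W/W_I}$. First I would fix a linear extension of the Bruhat order on $W/W_I$, giving a total order in which $B' \le B$ implies $B'$ comes weakly before $B$. By \Cref{lem: upper triangle}, writing each $\mathcal G(\Omega_B) = \sum_{B'\le B} c^B_{B'} U_{B'}$ with $c^B_B \ne 0$, the matrix expressing the $\mathcal G(\Omega_B)$ in the standard basis $\{U_B\}$ is triangular with respect to this linear order and has nonzero diagonal entries $c^B_B$. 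Hence it is invertible over $\QQ$, so $\{\mathcal G(\Omega_B)\}_{B\in W/W_I}$ is a basis of $\QQ^{W/W_I}$, and in particular $\mathcal G$ is surjective (indeed $\operatorname{im}\mathcal G = \QQ^{W/W_I}$).

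Given surjectivity, uniqueness follows formally: if $\psi_1, \psi_2 : \QQ^{W/W_I} \to A$ both satisfy $\psi_i \circ \mathcal G = g$, then $\psi_1 - \psi_2$ vanishes on $\operatorname{im}\mathcal G = \QQ^{W/W_I}$, hence $\psi_1 = \psi_2$. Equivalently and more explicitly, since $\{\mathcal G(\Omega_B)\}$ is a basis, one may simply define $\psi$ on this basis by $\psi(\mathcal G(\Omega_B)) := g(\Omega_B)$; any map satisfying $\psi\circ\mathcal G = g$ must agree with this prescription on each $\mathcal G(\Omega_B)$, and a linear map is determined by its values on a basis.

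I do not expect a serious obstacle here, as \Cref{lem: upper triangle} does all the real work; the only thing to be careful about is recording that $\mathcal G$ is a well-defined linear map on $\mathbb I(\mathsf{Mat}_{\Phi,I})_\QQ$ (which follows from \Cref{prop:GinvarByTC}, i.e. from valuativeness of tight containments via \Cref{thm: tight containment is val}) so that the composition $\psi\circ\mathcal G$ makes sense, and that "linear map determined on a spanning/basis set" is being applied to $\QQ^{W/W_I}$, whose standard basis is indexed by the finite set $W/W_I$. Combined with \Cref{prop: existence}, this completes the proof of \Cref{mainthm:Ginvar}. As a byproduct, the argument shows that the Coxeter Schubert matroids $\{\Omega_B\}_{B\in W/W_I}$ have $\mathcal G$-invariants forming a basis of $\QQ^{W/W_I}$, which is the content of \Cref{cor:Schubertsbasis}.
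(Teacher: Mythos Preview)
Your proposal is correct and is essentially the same argument as the paper's: both rely on \Cref{lem: upper triangle} to exhibit the change-of-basis from $\{U_B\}$ to $\{\mathcal G(\Omega_B)\}$ as upper-triangular with nonzero diagonal, and deduce that $\psi$ is uniquely determined. The paper phrases this as an explicit induction over the Bruhat order solving for each $\psi(U_B)$, while you package it as ``triangular matrix with nonzero diagonal is invertible, hence $\mathcal G$ is surjective''; these are the same computation.
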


\begin{proof} Suppose $\psi$ is a map satisfying the conditions of the proposition. We will show that $\psi$ is determined by the values on the Coxeter Schubert matroids by induction over the Bruhat order. For the base case, let $B = eW_I \in W/W_I$. Then the the equation
 \[ \psi \circ \mathcal{G}(\Omega_{B}) = c_{B}^{B} \psi(U_{B}) = g(\Omega_B)\]
implies that $\psi(U_{B}) = \frac{g(\Omega_B)}{c_{B}^{B}}$.
For any other $B \in W/W_I$, suppose we have already computed $\psi(U_{B'})$ for $B' \leq B$, then the equation
 \[\psi \circ \mathcal{G}(\Omega_B) = g(\Omega_B) =  c_B^B \psi(U_B) + \sum_{B' \lneq B} c_{B'}^{B} \psi(U_B') \]
gives the value of $\psi(U_B)$.
As the set $\{U_B\}$ is a basis for $\QQ^{W/W_I}$, this construction determines $\psi$. Hence, there is at most one such map.
\end{proof}

Note that this proposition also gives an algorithm to compute the specialization map $\psi$. Combining Proposition \ref{prop: existence} and Proposition \ref{prop: uniqueness} completes the proof of \Cref{mainthm:Ginvar}.

\begin{remark}\label{rem:badintersection}
Using \Cref{lem: upper triangle} can be avoided if the pair $(\Phi, I)$ satisfies the following property:  For any $v\in \mathscr V_I$, the intersection of $C_\emptyset + v$ with the polytope $P_{W/W_I} = \operatorname{Conv}(\delta_B \mid B\in W/W_I)$ is a base polytope of a $(\Phi, I)$-matroid.  Let us say that a pair $(\Phi, I)$ is \textbf{intersection-stable} if it satisfies this property.  If $A_n$ is the type $A$ root system of dimension $n$, the pair $(A_n, [n]\setminus i)$ is intersection-stable for each $i \in [n]$.  This feature of ordinary matroids is important in the argument of \cite{DF10} establishing $\mathcal G$ as the universal invariant of ordinary matroids.  However, intersection-stability is a rare property for general Coxeter systems:
 \begin{itemize}
     \item Even in type $A$, the pair $(A_3, \emptyset)$ is not intersection-stable.
     \item For $B_3$ (or $C_3$), let $\varpi_{1}$ and $\varpi_2$ be the fundamental weights such that their weight polytopes are not the cube. Then the pairs $(B_3, [3]\setminus 1)$ and $(B_3, [3]\setminus 2)$ are not intersection-stable (similarly for $C_3$).
     \item In type $D_4$, three of the four fundamental weights make the pair $(D_4, [4]\setminus i)$ not intersection-stable.
 \end{itemize}
The first point reflects that non-minuscule types usually fail to be intersection-stable since the converse of \Cref{lem: difference positive root} holds only for minuscule types \cite[4.1]{Pro84}.  The latter two points show that intersection-stability can fail even in minuscule types unless $\Phi = A_n$.  If $\varpi_n$ is the fundamental weight of $B_n$ so that the weight polytope is the hypercube, one can show that the pair $(B_n, [n]\setminus n)$ is intersection-stable.
\end{remark}

Two $(\Phi,I)$-matroids $M,M'$ are said to be  \textbf{isomorphic} if $M = w \cdot M'$ for some $w\in W$.  As an application of \Cref{mainthm:Ginvar}, we show that Coxeter Schubert matroids form a basis for the space
\[
\mathbb I(\mathsf{Mat}_{\Phi,I})/_W := \mathbb I(\mathsf{Mat}_{\Phi,I})_\QQ / \operatorname{span}(\one_M - \one_{w\cdot M} \mid M \in \mathsf{Mat}_{\Phi,I},\ w\in W)\]
of isomorphism classes of indicator functions of $(\Phi,I)$-matroids.

\begin{corollary}\label{cor:Schubertsbasis} For a Coxeter matroid $M$,  let $[M]$ denote the image of $\one_{P_M}$ in $\mathbb I(\mathsf{Mat}_{\Phi,I})/_W$.  Then the set $\{[\Omega_B]\}_{B \in W/W_I}$ of the classes of Coxeter Schubert matroids is a basis for $\mathbb I(\mathsf{Mat}_{\Phi,I})/_W$.
\end{corollary}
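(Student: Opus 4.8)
The plan is to combine the universality of $\mathcal{G}$ from \Cref{mainthm:Ginvar} with the upper-triangularity of \Cref{lem: upper triangle} to show simultaneously that $\{[\Omega_B]\}_{B\in W/W_I}$ spans and is linearly independent in $\mathbb{I}(\mathsf{Mat}_{\Phi,I})/_W$. The essential observation is that $\mathbb{I}(\mathsf{Mat}_{\Phi,I})/_W$ is, by \Cref{lem:averagemap}, exactly the target of the universal valuative invariant: a valuative invariant $g$ is the same datum as a linear functional on $\mathbb{I}(\mathsf{Mat}_{\Phi,I})/_W$, and by \Cref{mainthm:Ginvar} it is also the same datum as a linear map $\psi\colon \QQ^{W/W_I}\to A$ with $g = \psi\circ\mathcal{G}$. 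Applying this with $A = \QQ$ shows that $\mathcal{G}$ descends to an isomorphism $\overline{\mathcal{G}}\colon \mathbb{I}(\mathsf{Mat}_{\Phi,I})/_W \overset{\sim}{\to} \QQ^{W/W_I}$ (its dual is a bijection between linear functionals on the two spaces, and both are finite-dimensional — or one argues directly via the universal property that $\overline{\mathcal{G}}$ has a two-sided inverse). In particular $\dim \mathbb{I}(\mathsf{Mat}_{\Phi,I})/_W = |W/W_I|$.

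Given this, it suffices to show that the images $\overline{\mathcal{G}}([\Omega_B]) = \mathcal{G}(\Omega_B)$ for $B\in W/W_I$ form a basis of $\QQ^{W/W_I}$. But \Cref{lem: upper triangle} gives exactly this: writing $\mathcal{G}(\Omega_B) = \sum_{B'\leq B} c^B_{B'} U_{B'}$ with $c^B_B \neq 0$, the change-of-basis matrix from $\{\mathcal{G}(\Omega_B)\}_B$ to $\{U_B\}_B$ is triangular with respect to any linear extension of the Bruhat order on $W/W_I$, with nonzero diagonal entries, hence invertible. Therefore $\{\mathcal{G}(\Omega_B)\}_B$ is a basis of $\QQ^{W/W_I}$, and pulling back along the isomorphism $\overline{\mathcal{G}}$ shows $\{[\Omega_B]\}_B$ is a basis of $\mathbb{I}(\mathsf{Mat}_{\Phi,I})/_W$.

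The only point requiring a little care is establishing that $\overline{\mathcal{G}}$ is genuinely an isomorphism rather than merely that its dual is a bijection on $\operatorname{Hom}(-,\QQ)$; this is where one must know the spaces are finite-dimensional, which is not a priori obvious since $\mathsf{Mat}_{\Phi,I}$ is an infinite family. One clean route is: $\mathcal{G}$ is surjective already on the $\Omega_B$'s by the triangularity argument, so $\overline{\mathcal{G}}$ is surjective; and it is injective because any $x$ in its kernel is killed by every valuative invariant valued in $\QQ$ (each such invariant factors through $\overline{\mathcal{G}}$), hence killed by the identity functional composed appropriately — more precisely, the universal property applied to the quotient map $\mathbb{I}(\mathsf{Mat}_{\Phi,I})_\QQ \to \mathbb{I}(\mathsf{Mat}_{\Phi,I})/_W$ itself (a valuative invariant) produces $\psi$ with $\psi\circ\mathcal{G}$ equal to this quotient map, so $\ker\overline{\mathcal{G}} \subseteq \ker(\text{quotient map}) = 0$ after passing to the quotient. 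I expect this bookkeeping with the universal property to be the main obstacle; the triangularity input is then immediate from \Cref{lem: upper triangle}.
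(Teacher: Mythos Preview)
Your proposal is correct and follows essentially the same route as the paper: invoke \Cref{mainthm:Ginvar} together with \Cref{lem:averagemap} to identify $\mathbb{I}(\mathsf{Mat}_{\Phi,I})/_W$ with $\QQ^{W/W_I}$ via $\mathcal{G}$, then use the upper-triangularity of \Cref{lem: upper triangle} to conclude that $\{\mathcal{G}(\Omega_B)\}$ is a basis of $\QQ^{W/W_I}$. The paper's proof is terser about why $\overline{\mathcal{G}}$ is an isomorphism (it simply asserts this as what \Cref{mainthm:Ginvar} says, after identifying $\mathbb{I}(\mathsf{Mat}_{\Phi,I})/_W \simeq \mathbb{I}(\mathsf{Mat}_{\Phi,I})_\QQ^W$), whereas you spell out the Yoneda-style bookkeeping; your injectivity argument via applying the universal property to the quotient map itself is clean and correct, and surjectivity via triangularity is immediate, so the ``main obstacle'' you anticipated is not really one.
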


\begin{samepage}
\begin{proof}
Combined with \Cref{lem:averagemap}.(2), \Cref{mainthm:Ginvar} states that the map $\mathcal G: \mathbb I(\mathsf{Mat}_{\Phi,I})_\QQ^W \to \QQ^{W/W_I}$ is an isomorphism, and we have $\mathbb I(\mathsf{Mat}_{\Phi,I})/_W\simeq \mathbb I(\mathsf{Mat}_{\Phi,I})_\QQ^W$ by  \Cref{lem:averagemap}.(1).  Now, the ``upper-triangularity'' Lemma \ref{lem: upper triangle} states that we have
 \[ \mathcal{G}(\Omega_B) =  \sum_{B' \leq B} c^B_{B'}U_{B'} \quad \text{with} \quad c_B^B \neq 0,\]
which shows that $\{\mathcal G(\Omega_B)\}$ and $\{U_B\}$ are two bases of $\QQ^{W/W_I}$ related by an upper-triangular matrix.
\end{proof}
\end{samepage}

\begin{remark}
Suppose $\Phi$ is crystallographic, and let $G/P_I$ be the flag variety as in \Cref{rem:geomrem1}.  As Coxeter Schubert matroids correspond to Bruhat cells of $G/P_I$, we note the parallelism between \Cref{cor:Schubertsbasis} and the fact that (closures of) Bruhat cells of $G/P_I$ form a basis for the cohomology ring of $G/P_I$.  
\end{remark}

\subsection*{Acknowledgements}
We thank Alex Postnikov for helpful discussions which led to a sketch of the proof of \Cref{lem: upper triangle}.
We are also grateful to Federico Ardila for his helpful feedback, and to the Max Planck Institute for Mathematics in the Sciences for its hospitality.
C.\ Eur was supported by US National Science Foundation DMS-2001854 and UC Berkeley Dissertation Fellowship.
M.\ Sanchez was supported by the NSF Graduate Research Fellowship DGE 1752814.
M.\ Supina was supported by the Graduate Fellowships for STEM Diversity and an Erasmus+ Mobility grant.


\bibliographystyle{amsalpha}
\bibliography{sources}

\providecommand{\bysame}{\leavevmode\hbox to3em{\hrulefill}\thinspace}
\providecommand{\MR}{\relax\ifhmode\unskip\space\fi MR }
\providecommand{\MRhref}[2]{%
  \href{http://www.ams.org/mathscinet-getitem?mr=#1}{#2}
}
\providecommand{\href}[2]{#2}
\begin{thebibliography}{CMNR19b}

\bibitem[AA17]{AA17}
Marcelo {Aguiar} and Federico {Ardila}, \emph{{Hopf monoids and generalized
  permutahedra}}, arXiv e-prints (2017), arXiv:1709.07504.

\bibitem[ABS00]{ABS00}
Richard Arratia, B\'{e}la Bollob\'{a}s, and Gregory~B. Sorkin, \emph{The
  interlace polynomial: a new graph polynomial}, Proceedings of the {E}leventh
  {A}nnual {ACM}-{SIAM} {S}ymposium on {D}iscrete {A}lgorithms ({S}an
  {F}rancisco, {CA}, 2000), ACM, New York, 2000, pp.~237--245. \MR{1754863}

\bibitem[ABS04]{ABS04}
\bysame, \emph{The interlace polynomial of a graph}, J. Combin. Theory Ser. B
  \textbf{92} (2004), no.~2, 199--233. \MR{2099142}

\bibitem[ACEP20]{ACEP20}
Federico Ardila, Federico Castillo, Christopher Eur, and Alexander Postnikov,
  \emph{Coxeter submodular functions and deformations of {C}oxeter
  permutahedra}, Adv. Math. \textbf{365} (2020), 107039. \MR{4064768}

\bibitem[AFR10]{AFR10}
Federico Ardila, Alex Fink, and Felipe Rinc\'{o}n, \emph{Valuations for matroid
  polytope subdivisions}, Canad. J. Math. \textbf{62} (2010), no.~6,
  1228--1245. \MR{2760656}

\bibitem[AM10]{AM10}
Marcelo Aguiar and Swapneel Mahajan, \emph{Monoidal functors, species and
  {H}opf algebras}, CRM Monograph Series, vol.~29, American Mathematical
  Society, Providence, RI, 2010, With forewords by Kenneth Brown and Stephen
  Chase and Andr\'{e} Joyal. \MR{2724388}

\bibitem[AS20]{AS20}
Federico {Ardila} and Mario {Sanchez}, \emph{{Valuations and the Hopf Monoid of
  Generalized Permutahedra}}, arXiv e-prints (2020), arXiv:2010.11178.

\bibitem[BEZ20]{BEZ20}
Madeline {Brandt}, Christopher {Eur}, and Leon {Zhang}, \emph{{Tropical flag
  varieties}}, arXiv e-prints (2020), arXiv:2005.13727.

\bibitem[BGW03]{BGW03}
Alexandre~V. Borovik, I.~M. Gelfand, and Neil White, \emph{Coxeter matroids},
  Progress in Mathematics, vol. 216, Birkh\"{a}user Boston, Inc., Boston, MA,
  2003. \MR{1989953}

\bibitem[BH14]{BH14}
Robert Brijder and Hendrik~Jan Hoogeboom, \emph{Interlace polynomials for
  multimatroids and delta-matroids}, European J. Combin. \textbf{40} (2014),
  142--167. \MR{3191496}

\bibitem[Bou89]{Bou89}
Andr\'{e} Bouchet, \emph{Maps and {$\triangle$}-matroids}, Discrete Math.
  \textbf{78} (1989), no.~1-2, 59--71. \MR{1020647}

\bibitem[{Bri}37]{Bri37}
C.J. {Brianchon}, \emph{Th\'{e}or\`{e}me nouveau sur les poly\`{e}dres
  convexes}, J. \'{E}cole Polytechnique \textbf{15} (1837), 317--319.

\bibitem[CDM20]{CDM20}
Fabrizio Caselli, Michele D’Adderio, and Mario Marietti, \emph{Weak
  generalized lifting property, bruhat intervals, and coxeter matroids},
  International Mathematics Research Notices (2020), rnaa124.

\bibitem[CMNR19a]{CMNR19a}
Carolyn Chun, Iain Moffatt, Steven~D. Noble, and Ralf Rueckriemen,
  \emph{Matroids, delta-matroids and embedded graphs}, J. Combin. Theory Ser. A
  \textbf{167} (2019), 7--59. \MR{3938888}

\bibitem[CMNR19b]{CMNR19b}
\bysame, \emph{On the interplay between embedded graphs and delta-matroids},
  Proc. Lond. Math. Soc. (3) \textbf{118} (2019), no.~3, 675--700. \MR{3932785}

\bibitem[DF10]{DF10}
Harm Derksen and Alex Fink, \emph{Valuative invariants for polymatroids}, Adv.
  Math. \textbf{225} (2010), no.~4, 1840--1892. \MR{2680193}

\bibitem[FS12]{FS12}
Alex Fink and David~E. Speyer, \emph{{$K$}-classes for matroids and equivariant
  localization}, Duke Math. J. \textbf{161} (2012), no.~14, 2699--2723.
  \MR{2993138}

\bibitem[GGMS87]{GGMS87}
I.~M. Gelfand, R.~M. Goresky, R.~D. MacPherson, and V.~V. Serganova,
  \emph{{Combinatorial geometries, convex polyhedra, and Schubert cells}}, Adv.
  in Math. \textbf{63} (1987), no.~3, 301--316.

\bibitem[{Gra}74]{Gra74}
J.P. {Gram}, \emph{Om rumvinklerne i et polyeder}, Tidsskrift for Math
  \textbf{4} (1874), 161--163.

\bibitem[GS87a]{GS87a}
I.~M. Gelfand and V.~V. Serganova, \emph{Combinatorial geometries and the
  strata of a torus on homogeneous compact manifolds}, Uspekhi Mat. Nauk
  \textbf{42} (1987), no.~2(254), 107--134, 287. \MR{898623}

\bibitem[GS87b]{GS87b}
\bysame, \emph{On the general definition of a matroid and a greedoid}, Dokl.
  Akad. Nauk SSSR \textbf{292} (1987), no.~1, 15--20. \MR{871945}

\bibitem[HLT11]{HLT11}
Christophe Hohlweg, Carsten E. M.~C. Lange, and Hugh Thomas, \emph{Permutahedra
  and generalized associahedra}, Adv. Math. \textbf{226} (2011), no.~1,
  608--640. \MR{2735770}

\bibitem[Hum90]{Hum90}
James~E. Humphreys, \emph{Reflection groups and {C}oxeter groups}, Cambridge
  Studies in Advanced Mathematics, vol.~29, Cambridge University Press,
  Cambridge, 1990. \MR{1066460}

\bibitem[Joc19]{Joc19}
Katharina Jochemko, \emph{A brief introduction to valuations on lattice
  polytopes}, Algebraic and geometric combinatorics on lattice polytopes, World
  Sci. Publ., Hackensack, NJ, 2019, pp.~38--55. \MR{3971684}

\bibitem[Kap93]{Kap93}
M.~M. Kapranov, \emph{Chow quotients of {G}rassmannians. {I}}, I. {M}.
  {G}elfand {S}eminar, Adv. Soviet Math., vol.~16, Amer. Math. Soc.,
  Providence, RI, 1993, pp.~29--110. \MR{1237834}

\bibitem[Laf99]{Laf99}
L.~Lafforgue, \emph{Pavages des simplexes, sch\'{e}mas de graphes recoll\'{e}s
  et compactification des {${\rm PGL}^{n+1}_r/{\rm PGL}_r$}}, Invent. Math.
  \textbf{136} (1999), no.~1, 233--271. \MR{1681089}

\bibitem[Laf03]{Laf03}
\bysame, \emph{Chirurgie des grassmanniennes}, CRM Monograph Series, vol.~19,
  American Mathematical Society, Providence, RI, 2003. \MR{1976905}

\bibitem[McM93]{McM93}
Peter McMullen, \emph{Valuations and dissections}, Handbook of convex geometry,
  {V}ol. {A}, {B}, North-Holland, Amsterdam, 1993, pp.~933--988. \MR{1243000}

\bibitem[McM09]{McM09}
\bysame, \emph{Valuations on lattice polytopes}, Adv. Math. \textbf{220}
  (2009), no.~1, 303--323. \MR{2462842}

\bibitem[Mor17]{Mor17}
Ada Morse, \emph{The interlace polynomial}, Graph polynomials, Discrete Math.
  Appl. (Boca Raton), CRC Press, Boca Raton, FL, 2017, pp.~1--23. \MR{3790909}

\bibitem[Nak35]{Nak35}
Takeo Nakasawa, \emph{Zur axiomatik der linearen abhängigkeit. i}, Science
  Reports of the Tokyo Bunrika Daigaku, Section A \textbf{2} (1935), no.~43,
  235--255.

\bibitem[Nor79]{Nor79}
P.~N. Norton, \emph{0-hecke algebras}, Journal of the Australian Mathematical
  Society \textbf{27} (1979), no.~3, 337–357.

\bibitem[Pos09]{Pos09}
Alexander Postnikov, \emph{Permutohedra, associahedra, and beyond}, Int. Math.
  Res. Not. IMRN (2009), no.~6, 1026--1106. \MR{2487491}

\bibitem[Pro84]{Pro84}
Robert~A. Proctor, \emph{Bruhat lattices, plane partition generating functions,
  and minuscule representations}, Eur. J. Comb. \textbf{5} (1984), 331--350.

\bibitem[Rei92]{Rei92}
Victor Reiner, \emph{Quotients of {C}oxeter complexes and {$P$}-partitions},
  Mem. Amer. Math. Soc. \textbf{95} (1992), no.~460, vi+134. \MR{1101971}

\bibitem[Spe08]{Spe08}
David~E. Speyer, \emph{{Tropical linear spaces}}, SIAM J. Discrete Math.
  \textbf{22} (2008), no.~4, 1527--1558.

\bibitem[Spe09]{Spe09}
\bysame, \emph{A matroid invariant via the {$K$}-theory of the {G}rassmannian},
  Adv. Math. \textbf{221} (2009), no.~3, 882--913. \MR{2511042}

\bibitem[TW15]{TW15}
E.~Tsukerman and L.~Williams, \emph{Bruhat interval polytopes}, Adv. Math.
  \textbf{285} (2015), 766--810. \MR{3406515}

\bibitem[Whi35]{Whi35}
Hassler Whitney, \emph{On the {A}bstract {P}roperties of {L}inear
  {D}ependence}, Amer. J. Math. \textbf{57} (1935), no.~3, 509--533.
  \MR{1507091}

\end{thebibliography}

\end{document}